\newtheorem{thma}{Theorem}
\newtheorem{lemmaa}{Lemma}
\newtheorem{theorem}{Theorem}[section]
\newtheorem{lemma}[theorem]{Lemma}
\newtheorem{conjecture}{Conjecture}
\theoremstyle{definition}
\theoremstyle{remark}
\theoremstyle{definition}
\numberwithin{equation}{section}
\begin{document}
	
	\title[Restricted Signed Sumsets - I]{Direct and Inverse Problems for Restricted Signed Sumsets - I}
	

\author[R K Mistri]{Raj Kumar Mistri$^{*}$}
\address{\em{\small Department of Mathematics, Indian Institute of Technology Bhilai\\
Durg, Chhattisgarh, India\\
e-mail: rkmistri@iitbhilai.ac.in}}


\author[N Prajapati]{Nitesh Prajapati$^{**}$}
\address{\em{\small Department of Mathematics, Indian Institute of Technology Bhilai\\
Durg, Chhattisgarh, India\\
email: niteshp@iitbhilai.ac.in}}

\thanks{$^{*}$Corresponding author}
\thanks{$^{**}$The research of the author is supported by the UGC Fellowship (NTA Ref. No.: 211610023414)}

	\subjclass[2010]{Primary 11P70; Secondary 11B13, 11B75}

	\keywords{sumsets, restricted sumsets, signed sumset, restricted signed sumset.}

\begin{abstract}
Let $A=\{a_{1},\ldots,a_{k}\}$ be a nonempty finite subset of an additive abelian group $G$. For a positive integer $h$, the {\em $h$-fold signed sumset of $A$}, denoted by $h_{\pm}A$, is defined as
	$$h_{\pm}A=\left\lbrace \sum_{i=1}^{k} \lambda_{i} a_{i}: \lambda_{i} \in \{-h, \ldots, 0, \ldots, h\} \ \text{for} \ i=  1, 2, \ldots, k \ \text{and} \  \sum_{i=1}^{k} \left|\lambda_{i} \right| =h\right\rbrace, $$
and the {\em restricted $h$-fold signed sumset of $A$}, denoted by $h^{\wedge}_{\pm}A$, is defined as
	$$h^{\wedge}_{\pm}A=\left\lbrace \sum_{i=1}^{k} \lambda_{i} a_{i}: \lambda_{i} \in \left\lbrace -1, 0, 1\right\rbrace \ \text{for} \ i=  1, 2, \ldots, k \ \text{and} \  \sum_{i=1}^{k} \left|\lambda_{i} \right| = h\right\rbrace. $$
A direct problem for the sumset $h^{\wedge}_{\pm}A$ is to find the optimal size of $h^{\wedge}_{\pm}A$ in terms of $h$ and $|A|$. An inverse problem for this sumset is to determine the structure of the underlying set $A$ when the sumset $h^{\wedge}_{\pm}A$ has optimal size. While some results are known for the signed sumsets in finite abelian groups due to Bajnok and Matzke, not much is known for the restricted $h$-fold signed sumset $h^{\wedge}_{\pm}A$ even in the additive group of integers $\Bbb Z$. In case of $G = \Bbb Z$, Bhanja, Komatsu and Pandey studied these problems for the sumset $h^{\wedge}_{\pm}A$ for $h=2, 3$, and $k$, and conjectured the direct and inverse results for $h \geq 4$. In this paper, we prove these conjectures completely for the sets of positive integers. In a subsequent paper, we prove these conjectures for the sets of nonnegative integers.
\end{abstract}

	\maketitle
	

\section{Introduction}
Let $\Bbb Z$ denote the set of integers. For  integers $a$ and $b$ with $a \leq b$, we denote the set $\{n \in \Bbb Z: a \leq n \leq b\}$ by $[a, b]$. Let $|S|$ denote the size of the finite set $S$. An {\em arithmetic progression (A.P.)} of integers with $k$-terms and common difference $d$ is a set $A$ of the form $\{a + id: i = 0, 1, \ldots, k-1\}$, where $a, d \in \Bbb Z$ and $d \neq 0$. Let $G$ be an additive abelian group, and let $A_1, \ldots, A_h$ be nonempty finite subsets of $G$, where $h$ is a positive integer. The {\em sumset} of $A_1, \ldots, A_h$, denoted by $A_1 + \cdots + A_h$, is defined as
\[A_1 + \cdots + A_h := \{a_1 + \cdots + a_h: a_i \in A_i ~\text{for}~ i = 1, \ldots, h\}.\]
The {\em restricted sumset} of $A_1, \ldots, A_h$, denoted by $A_1 \dotplus \cdots \dotplus A_h$, is defined as
\[A_1 \dotplus \cdots \dotplus A_h := \{a_1 + \cdots + a_h: a_i \in A_i ~\text{for}~ i = 1, \ldots, h~ \text{and}~ a_i \neq a_j~\text{for}~i \neq j\}.\]
If $A_i = A$ for $i = 1, \ldots, h$, then the sumset $\underbrace{A + \cdots + A}_{h~\text{copies}}$ is usually denoted by $hA$, and it is called the {\em $h$-fold sumset of the set $A$}. Similarly, the restricted sumset $\underbrace{A \dotplus \cdots \dotplus A}_{h~\text{copies}}$ is usually denoted by $h^{\wedge}A$, and it is called the {\em restricted $h$-fold sumset of the set $A$}. Thus if $A = \{a_1, \ldots, a_k\}$, then
\[hA = \left\{\sum_{i=1}^{k}\lambda_i a_i: \lambda_i \in [0, h] ~\text{for}~ i = 1, \ldots, k ~\text{and}~ \sum_{i=1}^{k}\lambda_i = h\right\},\]
and
\[h^{\wedge}A =  \left\{\sum_{i=1}^{k}\lambda_i a_i: \lambda_i \in [0, 1] ~\text{for}~ i = 1,\ldots, k ~\text{and}~ \sum_{i=1}^{k}\lambda_i = h \right\}.\]

The study of the sumsets dates back to Cauchy \cite{cauchy} who proved that if $A$ and $B$ are nonempty subsets of $\Bbb Z_p$, then $|A + B| \geq \min(p, |A| + |B| -1)$, where $\Bbb Z_p$ is the group of prime order $p$. The result has been known as {\em Cauchy-Davenport Theorem} after Davenport rediscovered this result \cite{dav1, dav2} in $1935$. The Cauchy-Davenport theorem immediately implies that if $A$ is a nonempty subsets of $\Bbb Z_p$, then $|hA| \geq \min(p, h|A| - h + 1)$ for all positive integers $h$. The corresponding theorem for the restricted $h$-fold sumset $h^{\wedge}A$ in $\Bbb Z_p$ is due to Dias da Silva and Hamidoune (see \cite{dias}) who proved using the theory of exterior algebra that if $A$ is a nonempty subsets of $\Bbb Z_p$, then $|h^{\wedge}A| \geq \min(p, h|A| - h^2 + 1)$ for all positive integers $h \leq |A|$. Later this result was reproved by Alon, Nathanson and Ruzsa (see \cite{alon1} and \cite{alon2}) by means of {\em polynomial method} which is a very powerful tool for tackling certain problems in additive combinatorics. The theorem for $h = 2$ is known as {\em Erd\H{o}s - Heilbronn conjecture} which was first conjectured by Erd\H{o}s and Heilbronn (see \cite{erdos}) during $1960$.
	
The following theorem provides the optimal lower bound for the size of restricted $h$-fold sumset $h^{\wedge}A$ in the additive group of integers $\Bbb Z$.
\begin{thma}[{\cite[Theorem 1]{nath1}; \cite[Theorem 1.9]{nath}}]\label{restricted-hfold-direct-thm}
Let $A$ be a nonempty finite set of integers with $|A| = k$ and let $1 \leq h \leq k.$ Then
\begin{equation}\label{res-sum-bound}
  |h^{\wedge}A| \geq hk - h^2 + 1.
\end{equation}
The lower bound in $(\ref{res-sum-bound})$ is best possible.
\end{thma}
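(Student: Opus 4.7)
The plan is to prove the bound $|h^\wedge A| \ge hk - h^2 + 1$ by writing $A = \{a_1 < a_2 < \cdots < a_k\}$ and exhibiting an explicit strictly increasing chain of that length inside $h^\wedge A$. Since all elements of the chain are distinct integers, this forces the cardinality bound, regardless of the ambient order-theoretic structure (we only exploit that $\Bbb Z$ is totally ordered). This reduces the direct problem to a counting exercise on an index scheme.

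Concretely, for each pair $(j,\ell)$ with $0 \le j \le h-1$ and $h-j \le \ell \le k-j$, I would consider the sum
\[
 s_{j,\ell} \;=\; a_1 + a_2 + \cdots + a_{h-j-1} \;+\; a_\ell \;+\; a_{k-j+1} + a_{k-j+2} + \cdots + a_k,
\]
which is a sum of exactly $h$ distinct elements of $A$, hence lies in $h^\wedge A$. I would first verify that for fixed $j$, the map $\ell \mapsto s_{j,\ell}$ is strictly increasing (immediate from $a_{\ell+1} > a_\ell$), and then check the transition identity $s_{j,k-j} = s_{j+1,h-j-1}$ together with the strict inequality $s_{j+1,h-j-1} < s_{j+1,h-j}$. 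Stringing these together yields a strictly increasing sequence whose length is
\[
 (k-h+1) + \underbrace{(k-h) + (k-h) + \cdots + (k-h)}_{h-1 \text{ terms}} \;=\; hk - h^2 + 1,
\]
the first block of length $k-h+1$ coming from level $j=0$ and each subsequent level contributing $k-h$ new sums after the overlap with the preceding level.

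For sharpness, I would take $A = [0,k-1]$. Then $h^\wedge A$ contains precisely those integers that can be written as a sum of $h$ distinct elements of $[0,k-1]$; the minimum such sum is $0+1+\cdots+(h-1) = \binom{h}{2}$ and the maximum is $(k-h)+(k-h+1)+\cdots+(k-1) = hk - \binom{h+1}{2}$. A short argument (swap adjacent summands) shows every integer in this range is achieved, so $h^\wedge A$ is an interval of exactly $hk - h^2 + 1$ consecutive integers, matching the lower bound.

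The main obstacle is purely bookkeeping: one must organize the index scheme so that the chain is unambiguously strictly increasing across level transitions and so that the edge cases $j=0$ (no large fixed block) and $j=h-1$ (no small fixed block) are handled cleanly. Once the indexing is set up carefully, the strict monotonicity reduces to the single fact $a_{i+1} > a_i$ applied at each step, and the count $hk - h^2 + 1$ follows by summing the sizes of the levels minus the $h-1$ overlaps. The boundary cases $h = 1$ (which recovers $|A|=k$) and $h = k$ (which gives the singleton $\{a_1+\cdots+a_k\}$) both match the formula, providing useful sanity checks.
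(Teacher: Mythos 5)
Your proposal is correct, but note that the paper itself offers no proof of this statement: Theorem~A is imported verbatim from Nathanson (\cite[Theorem 1]{nath1}, \cite[Theorem 1.9]{nath}) and used as a black box. Your argument is a sound reconstruction of the classical proof from those sources: the sliding-window sums $s_{j,\ell} = a_1 + \cdots + a_{h-j-1} + a_\ell + a_{k-j+1} + \cdots + a_k$ do consist of $h$ distinct elements (since $h-j-1 < \ell$ and $\ell < k-j+1$), the transition identity $s_{j,k-j} = s_{j+1,h-j-1}$ checks out (both equal $a_1 + \cdots + a_{h-j-1} + a_{k-j} + \cdots + a_k$), and the count $(k-h+1) + (h-1)(k-h) = hk - h^2 + 1$ is right, including the degenerate blocks at $j=0$ and $j=h-1$. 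Your sharpness example $A = [0,k-1]$ also works: $h^\wedge A$ is the full integer interval $\left[\binom{h}{2},\, hk - \binom{h+1}{2}\right]$, of size exactly $hk - h^2 + 1$, and the swap argument you sketch (incrementing a largest summand that admits it) fills the interval. In short, there is nothing to compare against within this paper; measured against the cited source, your route is essentially the standard one, and it is complete.
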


Next theorem characterizes the sets $A \subseteq \Bbb Z$ for which the equality holds in $(\ref{res-sum-bound})$.
\begin{thma}[{ \cite[Theorem 2]{nath1}; \cite[Theorem 1.10]{nath}}]\label{restricted-hfold-inverse-thm}
Let $k \geq 5$ and let $2 \leq h \leq k - 2.$ If $A$ is a set of $k$ integers such that
\[|h^{\wedge}A| = hk - h^2 + 1,\]
then $A$ is a $k$-term arithmetic progression.
\end{thma}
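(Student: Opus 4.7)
The strategy is to sharpen the explicit construction that proves the lower bound of Theorem A into an inverse statement. Order $A = \{a_1 < a_2 < \cdots < a_k\}$ and start from the minimum element $s_0 = a_1 + a_2 + \cdots + a_h$ of $h^{\wedge}A$. Construct a strictly increasing chain $s_0 < s_1 < \cdots < s_{h(k-h)}$ in $h^{\wedge}A$ consisting of $h$ rows of $k-h$ single-entry swaps: Row~1 replaces $a_h$ successively by $a_{h+1}, \ldots, a_k$; Row~2 replaces $a_{h-1}$ by $a_h, a_{h+1}, \ldots, a_{k-1}$ (with $a_k$ now fixed at the top); and in general Row~$i$ replaces $a_{h-i+1}$ by $a_{h-i+2}, \ldots, a_{k-i+1}$. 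The resulting chain has exactly $hk - h^2 + 1$ entries, and the $h$-element index subsets labeling them all have the rigid form $\{1,\ldots,h-i\}\cup\{p\}\cup\{k-i+2,\ldots,k\}$. Under the hypothesis $|h^{\wedge}A| = hk - h^2 + 1$, this chain is \emph{all} of $h^{\wedge}A$, so any representation of an element of $h^{\wedge}A$ whose index subset is not of the above rigid form must numerically coincide with some $s_j$.

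The core of the proof is to exhibit sufficiently many such ``off-chain'' witnesses and extract linear relations from the forced coincidences. The simplest witness is $e = a_2 + a_3 + \cdots + a_{h+1}$, whose support $\{2,3,\ldots,h+1\}$ fails to be of the rigid form as soon as $h \leq k-2$. Sandwiching $e$ between consecutive chain values pins down the index $j$ for which $e = s_j$ by a size estimate, and equating yields an identity of the shape $a_{p+1} - a_p = a_{q+1} - a_q$ on consecutive differences of $A$. Running this procedure systematically over a family of witnesses such as $a_1 + \cdots + \widehat{a_i} + \cdots + a_h + a_j$, for suitably small $i$ and large $j$, together with their mirror images extracted from the dual chain rooted at the maximum $a_{k-h+1} + \cdots + a_k$, should force all consecutive differences $d_i := a_{i+1} - a_i$ to coincide, and hence $A$ to be an arithmetic progression. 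Two useful reductions streamline the argument: the involution $h^{\wedge}A = \sigma(A) - (k-h)^{\wedge}A$ with $\sigma(A) = a_1 + \cdots + a_k$ gives $|h^{\wedge}A| = |(k-h)^{\wedge}A|$, so we may restrict to $2 \leq h \leq k/2$; and induction on $h$ may be deployed with base case $h = 2$, which is the classical integer Erd\H{o}s--Heilbronn inverse result.

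The main difficulty is the combinatorial bookkeeping of collisions: many off-chain witnesses may land on the same $s_j$, and one must choose them so that (a) the forced position in the chain is uniquely determined by a size estimate, and (b) the extracted relations together imply equality of \emph{all} the $d_i$, not merely of some. Boundary regimes need extra care: when $h$ is close to $k/2$ the two halves of the chain overlap tightly, and when $k$ is small the pool of off-chain witnesses is thin, which is precisely why the hypothesis $k \geq 5$ is required.
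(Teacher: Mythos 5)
First, a point of order: the paper does not prove this statement at all. It is quoted as background (Theorem~B) with citations to Nathanson's 1995 paper and his book, so there is no ``paper's own proof'' to match against. Judged on its own merits, your outline does track the known strategy behind Nathanson's proof: the chain of $hk-h^2+1$ sums built from single-index swaps is exactly the construction that proves the direct Theorem~A, the duality $h^{\wedge}A = \sigma(A) - (k-h)^{\wedge}A$ is the standard reduction, and the inverse step does proceed by producing elements of $h^{\wedge}A$ outside the chain (or forcing collisions) to extract relations among the differences $d_i = a_{i+1}-a_i$. So the approach is sound in outline.

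But as written there is a genuine gap: the decisive step is asserted, not executed. Everything after ``Running this procedure systematically \ldots should force all consecutive differences to coincide'' is precisely the content of the theorem, and you concede in your final paragraph that the collision bookkeeping --- choosing witnesses so that the chain position is uniquely pinned and the resulting relations generate equality of \emph{all} $d_i$ --- is unresolved. Note also that a single witness typically yields a relation between \emph{sums} of differences (e.g.\ equating $e = a_2+\cdots+a_{h+1}$ with a row-one chain element $a_1+\cdots+a_{h-1}+a_{h+m}$ gives $a_h - a_1 = a_{h+m}-a_{h+1}$), not directly an equality of two consecutive differences, so the propagation argument is where all the work lives. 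Two further soft spots: (i) your explanation of the hypothesis $k \geq 5$ (``the pool of witnesses is thin'') misses the concrete obstruction --- for $k=4$, $h=2$ the non-progression $A=\{0,1,3,4\}$ satisfies $|2^{\wedge}A| = 5 = hk-h^2+1$, and any complete collision analysis must visibly fail on such symmetric sets $B \cup (b-B)$; (ii) invoking ``the classical integer Erd\H{o}s--Heilbronn inverse result'' as the base case $h=2$ is close to circular, since that statement \emph{is} the present theorem at $h=2$ (the Erd\H{o}s--Heilbronn conjecture proper concerns the lower bound in $\mathbb{Z}_p$); the base case must either be proved or cited as Nathanson's own Theorem~2, which is the result under review. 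In short: right skeleton, but the proof is not there yet.
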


These sumsets and other kind of sumsets have been studied extensively in liteartures (see \cite{nath, tao, freiman, mann} and the references given therein).

Two other variants of these sumsets have appeared recently in the literature in the works of Bajnok, Ruzsa and other researchers \cite{bajnok-ruzsa2003, bajnok-matzke2015,bajnok-matzke2016, bajnok2018, bhanja-pandey2019, bhanja-kom-pandey2021, klopsch-lev2003, klopsch-lev2009}: the {\it $h$-fold signed sumset} $h_{\pm}A$ and the {\it restricted $h$-fold signed sumset} $h^{\wedge}_{\pm}A$ of the set $A$ which are defined as follows:
	\begin{equation*}
		h_{\pm}A := \left\{\sum_{i=1}^{k} \lambda_i a_i : \lambda_i \in [-h, h] ~\text{for}~ i = 1, 2, \ldots ,k ~\text{and}~ \sum_{i=1}^{k} |\lambda_i| =h \right\},
	\end{equation*}
	and
	\begin{equation*}
		h^{\wedge}_{\pm}A := \left\{\sum_{i=1}^{k} \lambda_i a_i : \lambda_i \in [-1, 1] ~\text{for}~ i = 1, 2, \ldots ,k ~\text{and}~ \sum_{i=1}^{k} |\lambda_i| =h \right\}.
	\end{equation*}
It is easy to see that
\[h^{\wedge}A \cup h^{\wedge}(-A) \subseteq h_{\pm}^{\wedge}A \subseteq h_{\pm}^{\wedge}(A \cup -A),\]
and
\[h\hat{}_{\pm}A \subseteq h_{\pm}A,\]
For a nonzero integer $c$, we have
\[h^{\wedge}_{\pm}(c \ast A) = c \ast (h^{\wedge}_{\pm}A),\]
where $c \ast A = \{ca: a \in A\}$ and $-A = (-1) \ast A $.

While $h$-fold sumsets are well-studied in the literature, the $h$-fold signed sumsets are not well-studied in the literature. The signed sumsets appear naturally in the literature in many contexts. The $h$-fold signed sumset $h_{\pm}A$ first appeared in the work of Bajnok and Ruzsa \cite{bajnok-ruzsa2003} who studied it in the context of the independence number of a subset of an abelian group $G$ (see also \cite{bajnok2000} and \cite{bajnok2004}), and it also appeared in the work of Klopsch and Lev \cite{klopsch-lev2003,klopsch-lev2009} in the context of dimameter of the group $G$ with respect to the set $A$. The {\em independence number} of a subset $A$ of $G$ is defined \cite{bajnok-ruzsa2003} as the largest positive integer $t$ such that
\[0 \not \in \bigcup\limits_{h=1}^{t} h_{\pm}A.\] 
The {\em diameter} of $G$ with respect to $A$ is defined \cite{klopsch-lev2003} as the smallest positive integer $s$ such that 
\[\bigcup_{h=0}^{s} h_{\pm}A=G.\]

For a positive integer $m \leq |G|$, define
\[\rho(G, m, h) = \min \{|hA| : A \subseteq G, |A|=m\}\] 
and   
\[\rho_{\pm}(G, m, h) = \min \{|h_{\pm}A| : A \subseteq G, |A|=m\}.\] 
Bajnok and Matzke initiated the detailed study of the function $\rho_{\pm}(G, m, h)$, and they proved that $\rho_{\pm}(G, m, h) = \rho(G, m, h)$, when $G$ is a finite cyclic group \cite{bajnok-matzke2015}. In another work, they studied the cases when $\rho_{\pm}(G, m, h) = \rho(G, m, h)$, where $G$ is an elementary abelian group \cite{bajnok-matzke2016}. In a recent paper \cite{bhanja-pandey2019}, Bhanja and Pandey have studied the direct and inverse problems in the additive group $\mathbb{Z}$ of integers. They obtained the optimal lower bound for the cardinality of the sumset $h_{\pm}A$. They also proved that if the optimal lower bound is achieved, then $A$ must be a certain arithmetic progression.

In case of restricted signed sumset $h^{\wedge}_{\pm}A$, not much is known even in the additive group of integers $\Bbb Z$. The direct problem for the sumset $h_{\pm}^{\wedge}A$ is to find lower bounds for $|h_{\pm}^{\wedge}A|$ in terms of $|A|$. The inverse problem for this sumset is to determine the structure of the finite sets $A$ of for which $|h_{\pm}^{\wedge}A|$ is optimal. In this direction, recently, Bhanja, Komatsu and Pandey \cite{bhanja-kom-pandey2021} solved some cases of both the direct and inverse problems for  $h^{\wedge}_{\pm}A$ in $\Bbb Z$ and conjectured for the rest of the cases. More precisely, they proved the following result.
	
\begin{thma}[{\cite[Theorem 2.1]{bhanja-kom-pandey2021}}]\label{thm:3}
Let $h$ and $k$ be positive integers with $h \leq k$. Let $A$ be a set of $k$ positive integers. Then
\begin{equation}\label{Bound A}
	\left|h^{\wedge}_{\pm}A\right| \geq   2(hk-h^2)+ \frac{h(h+1)}{2} + 1.
\end{equation}
These lower bounds are best possible for $h=1, 2$ and $k$.
\end{thma}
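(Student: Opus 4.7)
The plan is to exhibit $2(hk-h^2) + h(h+1)/2 + 1$ distinct elements of $h^{\wedge}_{\pm}A$ by combining three pairwise disjoint contributions. Write $A = \{a_1 < a_2 < \cdots < a_k\}$ and set $T := a_1 + a_2 + \cdots + a_h$. Every element of $h^{\wedge}A$ is a sum of $h$ distinct positive $a_i$'s, so $h^{\wedge}A \subseteq [T,\infty)$ and $-h^{\wedge}A \subseteq (-\infty,-T]$; both are subsets of $h^{\wedge}_{\pm}A$ (set the unused $\lambda_i$'s to $0$), they are disjoint since one consists of positive integers and the other of negatives, and Theorem~\ref{restricted-hfold-direct-thm} yields $|h^{\wedge}A| = |{-h^{\wedge}A}| \geq hk-h^2+1$. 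These two sets already account for $2(hk-h^2+1)$ distinct elements of $h^{\wedge}_{\pm}A$.

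The third contribution is the \emph{bridge} $B := \{\sum_{i=1}^{h}\epsilon_i a_i : \epsilon_i \in \{-1,+1\}\}$, which lies in $h^{\wedge}_{\pm}A$ via $\lambda_i := \epsilon_i$ for $1 \leq i \leq h$ and $\lambda_i := 0$ for $i > h$. Since $\sum_i \epsilon_i a_i = T - 2\sum_{i:\,\epsilon_i=-1} a_i$, the set $B$ is in affine bijection with the set of subset sums of $\{a_1,\ldots,a_h\}$, so the combinatorial heart of the argument is the claim $|B| \geq h(h+1)/2 + 1$. I would prove this by displaying an explicit strictly increasing chain of subset sums. For $0 \leq j \leq h$, let block $j$ consist of the subsets
\[
\{a_i, a_{h-j+2}, a_{h-j+3}, \ldots, a_h\}, \qquad i = 1, 2, \ldots, h-j+1,
\]
with block $0$ being $\{\emptyset\}$. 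Within block $j$ the sums strictly increase with $i$ since $a_i$ does; from the last subset of block $j$ (taking $i = h-j+1$) to the first of block $j+1$ (taking $i=1$), the only change is the adjunction of $a_1$ to the working subset, so the sum strictly increases by $a_1 > 0$. Block $j$ contains $h-j+1$ subsets for $j \geq 1$ and $1$ subset for $j = 0$, so the chain has length $1 + \sum_{j=1}^{h}(h-j+1) = h(h+1)/2 + 1$.

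The extremes of $B$ are $T$ (all $+$ signs) and $-T$ (all $-$ signs), which already lie in $h^{\wedge}A$ and $-h^{\wedge}A$ respectively; the remaining $h(h+1)/2 - 1$ members of $B$ lie strictly in the open interval $(-T, T)$ and are therefore disjoint from $h^{\wedge}A \cup (-h^{\wedge}A)$. Combining the three disjoint contributions gives $|h^{\wedge}_{\pm}A| \geq 2(hk-h^2+1) + (h(h+1)/2 - 1) = 2(hk-h^2) + h(h+1)/2 + 1$, as required. For the sharpness assertion, I would verify by direct computation that the bound is attained by $A = \{1, 2, \ldots, k\}$ when $h = 1$ or $h = k$ (in the latter case $k^{\wedge}_{\pm}A$ is exactly the set of integers in $[-k(k+1)/2, k(k+1)/2]$ of parity $k(k+1)/2 \bmod 2$), and by $A = \{1, 3, 5, \ldots, 2k-1\}$ when $h = 2$ (where $2^{\wedge}_{\pm}A$ equals the set of even integers in $[-(4k-4), 4k-4]$). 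The only step that really demands care is verifying the strict monotonicity of the bridge chain at the block boundaries; everything else is a bookkeeping argument based on Theorem~\ref{restricted-hfold-direct-thm} and the sign symmetry of $h^{\wedge}_{\pm}A$.
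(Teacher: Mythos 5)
Your proposal is correct, but note first that the paper never proves this statement: it is Theorem C, imported from Bhanja--Komatsu--Pandey \cite[Theorem 2.1]{bhanja-kom-pandey2021}, so there is no in-paper proof to compare against line by line. That said, your argument is exactly in the idiom the paper uses for its own lemmas. Your three-part decomposition into $h^{\wedge}A \subseteq [T, \infty)$ with $T = a_1 + \cdots + a_h$, its reflection $h^{\wedge}(-A) \subseteq (-\infty, -T]$, and the bridge $B = \{\sum_{i=1}^{h} \epsilon_i a_i : \epsilon_i \in \{-1, 1\}\}$ is the same skeleton as the union $h_{\pm}^{\wedge}A_{h+1} \cup h^{\wedge}A \cup h^{\wedge}(-A)$ in Lemma \ref{rss-lem3}; your affine identification $B = T - 2 \ast \Sigma(\{a_1, \ldots, a_h\})$ is Fact (2) of Section \ref{section-rssp-aux-lemma}; and your increasing chain of subset sums, with $h - j + 1$ subsets in block $j$ and strict increase by $a_1$ at each block boundary, is essentially the family $B_0, \ldots, B_h$ constructed in Lemmas \ref{rss-lem1} and \ref{rss-lem2}. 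The bookkeeping checks out: the chain gives $|B| \geq h(h+1)/2 + 1$, the two extremes $\pm T$ of $B$ are absorbed into $h^{\wedge}A$ and $h^{\wedge}(-A)$, the remaining $h(h+1)/2 - 1$ values lie strictly in $(-T, T)$, and with Theorem \ref{restricted-hfold-direct-thm} the total is $2(hk - h^2 + 1) + h(h+1)/2 - 1 = 2(hk - h^2) + h(h+1)/2 + 1$. The degenerate cases behave correctly ($h = k$ collapses $h^{\wedge}A$ to $\{T\}$ and the bound to $h(h+1)/2 + 1$, as it must).

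One small error in your sharpness discussion for $h = 2$: with $A = \{1, 3, \ldots, 2k - 1\}$, the sumset $2^{\wedge}_{\pm}A$ is \emph{not} the full set of even integers in $[-(4k - 4), 4k - 4]$; it omits $0$, since $a_i - a_j \neq 0$ for $i \neq j$ and $\pm(a_i + a_j) \neq 0$. That omission is precisely why the count is $4k - 4$ rather than $4k - 3$, which is what matches the bound $2(2k - 4) + 3 + 1 = 4k - 4$; so the example does attain the bound as you claim, but your parenthetical description of the sumset is off by one element. The $h = 1$ and $h = k$ verifications are correct as stated.
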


\begin{thma}[{\cite[Theorem 3.1]{bhanja-kom-pandey2021}}]\label{thm:5}
		Let $h$ and $k$ be integer such that $1 \leq h \leq k$. Let $A$ be set of $k$ nonnegative integers such that $0 \in A$. Then
\begin{equation}\label{Bound B}
  |h_{\pm}^\wedge A| \geq 2(hk - h^2) + \frac{h(h - 1)}{2} + 1.
\end{equation} 
		This lower bound is best possible for $h = 1, 2$, and $k$. 
\end{thma}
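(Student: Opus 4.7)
\medskip\noindent\textbf{Proof plan.} The strategy is to reduce to Theorem~\ref{thm:3} (the bound for sets of positive integers) by splitting off the element $0$. Write $A' = A \setminus \{0\}$, a set of $k-1$ positive integers. Because $a_1 = 0$ contributes nothing to $\sum_i \lambda_i a_i$ for every $\lambda_1 \in \{-1, 0, 1\}$ while adding $|\lambda_1| \in \{0, 1\}$ to the weight $\sum_i |\lambda_i| = h$, separating on $\lambda_1 = 0$ versus $\lambda_1 = \pm 1$ yields the decomposition
\[
h^{\wedge}_{\pm} A \;=\; h^{\wedge}_{\pm}(A') \;\cup\; (h-1)^{\wedge}_{\pm}(A').
\]

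For the boundary case $h = k$, the set $A'$ has only $k - 1 < h$ elements, so $h^{\wedge}_{\pm}(A') = \varnothing$ and $h^{\wedge}_{\pm} A = (k-1)^{\wedge}_{\pm}(A')$. Theorem~\ref{thm:3} applied to $A'$ with exponent $k-1$ delivers $|(k-1)^{\wedge}_{\pm}(A')| \geq \frac{k(k-1)}{2} + 1$, matching the claimed bound exactly. For $1 \leq h \leq k - 1$, Theorem~\ref{thm:3} applied to $A'$ with exponent $h$ gives
\[
|h^{\wedge}_{\pm}(A')| \;\geq\; 2\bigl(h(k-1) - h^2\bigr) + \tfrac{h(h+1)}{2} + 1 \;=\; \Bigl[\, 2(hk - h^2) + \tfrac{h(h-1)}{2} + 1 \,\Bigr] - h,
\]
which falls short of the required lower bound by exactly $h$. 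The plan is to close this $h$-element deficit by exhibiting $h$ distinct members of $(h-1)^{\wedge}_{\pm}(A') \setminus h^{\wedge}_{\pm}(A')$. Since $h^{\wedge}_{\pm}$ is closed under negation, it suffices to produce about $h/2$ positive candidates and pair them with their negatives, handling $0$ separately according to the parity of $h$. The natural candidates are extremal positive elements of $(h-1)^{\wedge}_{\pm}(A')$, such as the maximum $a_{k-h+2} + \cdots + a_k$ and a short family of its perturbations obtained by flipping one sign or swapping one index with a smaller one.

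\medskip\noindent\textbf{Main obstacle.} The crux is the non-representability step: each chosen candidate must be shown genuinely unattainable as an $h$-term signed sum from $A'$. For general $A'$ (not necessarily an AP), many signed cancellations among $h$ terms of $A'$ can recover values already lying in $(h-1)^{\wedge}_{\pm}(A')$, so a size argument alone does not suffice. A natural route is a magnitude-gap argument: the largest element of $h^{\wedge}_{\pm}(A')$ is $a_{k-h+1} + \cdots + a_k$ and its second-largest lies strictly below it by at least $\min(a_{k-h+1} - a_{k-h},\, 2a_2)$, leaving a window into which the extremal positive elements of $(h-1)^{\wedge}_{\pm}(A')$ targeted above can be made to fall. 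Formalizing this via careful partial-sum inequalities on $A'$, and treating the small cases $h = 1$ and $h = 2$ by direct enumeration, is where the technical work lies.
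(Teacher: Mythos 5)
The paper itself does not prove this statement: Theorem~\ref{thm:5} is quoted verbatim from \cite[Theorem 3.1]{bhanja-kom-pandey2021}, so your attempt can only be judged on its own merits. The parts you did carry out are correct: writing $A = \{0\} \cup A'$ with $A' = A \setminus \{0\}$, the decomposition $h^{\wedge}_{\pm}A = h^{\wedge}_{\pm}(A') \cup (h-1)^{\wedge}_{\pm}(A')$ is valid; the boundary case $h = k$ does follow exactly from Theorem~\ref{thm:3} applied to $A'$ with exponent $k-1$; and your bookkeeping showing that for $1 \leq h \leq k-1$ the Theorem~\ref{thm:3} bound on $|h^{\wedge}_{\pm}(A')|$ falls short of the target by exactly $h$ is accurate.

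The gap is in the closing step, and it is not merely technical: the plan to exhibit $h$ elements of $(h-1)^{\wedge}_{\pm}(A') \setminus h^{\wedge}_{\pm}(A')$ is false in general. Take $A = \{0,1,2,3\}$ and $h = 2$, so $A' = \{1,2,3\}$: then $1^{\wedge}_{\pm}A' = \{\pm 1, \pm 2, \pm 3\}$ while $2^{\wedge}_{\pm}A' = \{\pm 1, \pm 2, \pm 3, \pm 4, \pm 5\}$, so $(h-1)^{\wedge}_{\pm}(A') \subseteq h^{\wedge}_{\pm}(A')$ and there are \emph{no} escaping elements whatsoever; the theorem nevertheless holds here only because $|2^{\wedge}_{\pm}A'| = 10$ already exceeds the Theorem~\ref{thm:3} bound of $8$ by the missing $2$. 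Thus any correct completion must couple the count of escaping elements with the slack of $|h^{\wedge}_{\pm}(A')|$ above its minimum: precisely when $|h^{\wedge}_{\pm}(A')|$ is near-minimal one needs structural (inverse-theorem type) information about $A'$ to manufacture escapers, and that is the hard content your proposal leaves undone. Your suggested magnitude-gap fallback also fails concretely: writing $A = \{0 = a_1 < a_2 < \cdots < a_k\}$ and $M = \max(h^{\wedge}_{\pm}A') = a_{k-h+1} + \cdots + a_k$, one has $\max((h-1)^{\wedge}_{\pm}A') = M - a_{k-h+1}$, which for $k - h \geq 2$ lies strictly below the second-largest element $M - (a_{k-h+1} - a_{k-h})$ of $h^{\wedge}_{\pm}(A')$; so the window just below $M$ contains no element of $(h-1)^{\wedge}_{\pm}(A')$ at all, and none of your extremal candidates can be placed in it.
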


\begin{thma}[{\cite[Theorem 2.3]{bhanja-kom-pandey2021}}]\label{thm:4}
Let $h \geq 3$ be a positive integer. Let $A$ be the set of $h$ positive integers such that $|h_{\pm}^\wedge A| = \frac{h(h + 1)}{2} + 1$. Then 
		\begin{equation}
			A =
			\begin{cases}
				\{a_1, a_2, a_1 + a_2\} ~\text{with}~ 0 < a_1 < a_2, & \mbox{if } h = 3;\\
				d \ast [1, h] ~\text{for some positive integer}~ d, & \mbox{if } h \geq 4.  
			\end{cases}
		\end{equation}
\end{thma}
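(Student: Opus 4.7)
The plan is to rewrite $|h^{\wedge}_{\pm}A| = \binom{h+1}{2}+1$ as a subset-sum question and then force the structure of $A$ by controlling how the subset-sum set grows one element at a time. Write $A = \{a_1 < \cdots < a_h\}$ and $T = a_1 + \cdots + a_h$. Grouping positive-coefficient indices $I \subseteq \{1,\ldots,h\}$ in each signed sum yields the identity $h^{\wedge}_{\pm}A = 2\Sigma(A) - T$, where $\Sigma(A) := \{\sum_{i \in I} a_i : I \subseteq \{1,\ldots,h\}\}$. Hence $|h^{\wedge}_{\pm}A| = |\Sigma(A)|$, and the hypothesis becomes $|\Sigma(A)| = \binom{h+1}{2}+1$.

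Introducing the chain $B_i := \Sigma(\{a_1,\ldots,a_i\})$ with $B_0 = \{0\}$ and the recursion $B_i = B_{i-1} \cup (B_{i-1}+a_i)$, I first observe that $|B_i|-|B_{i-1}| \geq i$: the $i$ distinct elements $T_{i-1} := a_1+\cdots+a_{i-1}$ and $T_{i-1}-a_j$ for $1 \leq j \leq i-1$ lie in $B_{i-1}$, and shifting any of them by $a_i$ produces a value exceeding $\max B_{i-1} = T_{i-1}$ (using $a_i > a_j$ for $j < i$). Summing, $|\Sigma(A)| \geq \binom{h+1}{2}+1$, and the hypothesis forces $|B_i|-|B_{i-1}| = i$ for every $i \in \{1,\ldots,h\}$.

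At $i=3$ this equality forces the single ``non-top'' shifted element $a_3 = 0 + a_3$ to lie in $B_2 = \{0,a_1,a_2,a_1+a_2\}$, which, combined with $a_3 > a_2$, gives $a_3 = a_1 + a_2$ and settles the $h=3$ case. For the base case $h=4$, this in turn determines $B_3 = \{0,a_1,a_2,a_1+a_2,2a_1+a_2,a_1+2a_2,2a_1+2a_2\}$, and the step-$i=4$ equality demands exactly three pairs $(b,b') \in B_3\times B_3$ with $b'-b = a_4$. A direct enumeration shows that the only symbolic differences in $B_3$ exceeding $a_1+a_2$ (the strict lower bound for $a_4$) are $2a_1+a_2,\,a_1+2a_2,\,2a_2,\,2a_1+2a_2$ with respective multiplicities $2,2,1,1$. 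No single value has multiplicity $3$, and the only coincidence of two of these compatible with $0<a_1<a_2$ is $2a_1+a_2 = 2a_2$, i.e., $a_2 = 2a_1$. This forces $a_4 = 4a_1$ with combined multiplicity $3$, so $A = a_1 \ast [1,4]$.

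For $h \geq 5$ I induct on $h$. The equalities applied to the subchain $B_0,\ldots,B_{h-1}$ force $|B_{h-1}| = \binom{h}{2}+1$, which is the extremal value in Theorem C for $\{a_1,\ldots,a_{h-1}\}$. Since $h-1\geq 4$, the induction hypothesis yields $\{a_1,\ldots,a_{h-1}\} = d \ast [1,h-1]$ with $d = a_1$, so $B_{h-1} = d \ast [0,\binom{h}{2}]$. The step-$i=h$ equality requires $\binom{h-1}{2}$ pairs in $B_{h-1}$ differing by $a_h$; because $B_{h-1} \subseteq d\mathbb{Z}$, necessarily $a_h = jd$ for some positive integer $j$, and the count is $\binom{h}{2}+1-j$, which equals $\binom{h-1}{2}$ only for $j = h$. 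Hence $A = d\ast[1,h]$. The main obstacle is precisely the base case $h=4$: one cannot induct down to $h=3$, because the extremal three-element sets $\{a_1,a_2,a_1+a_2\}$ are not arithmetic progressions, so the rigidification from ``$a_3 = a_1+a_2$'' to a full four-term AP must be produced by the finite difference-multiplicity analysis above.
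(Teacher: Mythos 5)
Your proof is correct. Note first that this statement is Theorem D in the paper, quoted from Bhanja--Komatsu--Pandey \cite{bhanja-kom-pandey2021} without proof, so there is no in-paper argument to compare against; your starting identity $h^{\wedge}_{\pm}A = 2\ast\Sigma(A) - T$ (valid because $|A|=h$ forces every $\lambda_i \in \{-1,1\}$) is exactly the paper's Fact (2), which it likewise exploits in Lemmas \ref{rss-lem2}--\ref{rss-lem19}. I verified the key steps: the chain $B_i = B_{i-1}\cup(B_{i-1}+a_i)$ with the $i$ witnesses $T_{i-1}, T_{i-1}-a_j$ correctly gives $|B_i|-|B_{i-1}|\geq i$ and recovers the lower bound $\binom{h+1}{2}+1$, so the hypothesis rigidifies every step; the $i=3$ step forces $a_3=a_1+a_2$ (settling $h=3$); your enumeration of differences in $B_3=\{0,a_1,a_2,a_1+a_2,2a_1+a_2,a_1+2a_2,2a_1+2a_2\}$ exceeding $a_1+a_2$ and their multiplicities $(2a_1+a_2:2,\ a_1+2a_2:2,\ 2a_2:1,\ 2a_1+2a_2:1)$ is accurate, and $2a_1+a_2=2a_2$ is indeed the only coincidence compatible with $0<a_1<a_2$, yielding $a_2=2a_1$, $a_4=4a_1$; and for $h\geq 5$ the induction via $|B_{h-1}|=\binom{h}{2}+1$, the identification $B_{h-1}=d\ast[0,\binom{h}{2}]$, and the count $\binom{h}{2}+1-j=\binom{h-1}{2}$ forcing $j=h$ all check out. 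You were also right to flag that the induction cannot bottom out at $h=3$ (whose extremal sets are not arithmetic progressions), so the separate $h=4$ multiplicity analysis is genuinely necessary; this stepwise-growth argument is a clean, self-contained alternative to the original source's proof and is fully compatible with the machinery this paper builds on top of the cited statement.
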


\begin{thma}[{\cite[Theorem 3.3]{bhanja-kom-pandey2021}}]\label{thm:6}
		Let $h \geq 4$ be a positive integer. Let $A$ be the set of $h$ nonnegative integers with $0 \in A$ such that $|h_{\pm}^\wedge A| = \frac{h(h - 1)}{2} + 1$. Then 
		\begin{equation}
			A =
			\begin{cases}
				\{0, a_1, a_2, a_1 + a_2\} ~\text{with}~ 0 < a_1 < a_2, &\text{if $h = 4$};\\
				d \ast [0, h - 1] ~\text{for some positive integer}~ d, &\text{if $h \geq 5$.}
			\end{cases}
		\end{equation}
\end{thma}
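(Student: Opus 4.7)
The plan is to reduce this inverse statement for nonnegative integers with $0 \in A$ to Theorem~\ref{thm:4}, the analogous inverse theorem for sets of positive integers in the $k = h$ regime. The crucial structural input is a pigeonhole observation: since $|A| = h$ and every element of $h^{\wedge}_{\pm} A$ is represented as $\sum \lambda_i a_i$ with $\lambda_i \in \{-1, 0, 1\}$ and $\sum |\lambda_i| = h$, there is no slack, so $|\lambda_i| = 1$ for each $i$. Every representation therefore uses each element of $A$ with a $\pm 1$ coefficient.

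I would write $A = \{0, a_1, \ldots, a_{h-1}\}$ with $0 < a_1 < \cdots < a_{h-1}$, and set $A' := A \setminus \{0\}$, a set of $h - 1$ positive integers. Because the $\pm 1$ coefficient attached to $0$ contributes nothing to the sum, the observation above yields
\[
h_{\pm}^{\wedge} A = \{\epsilon_1 a_1 + \cdots + \epsilon_{h-1} a_{h-1} : \epsilon_i \in \{-1, +1\}\}.
\]
The same pigeonhole counting applied to $A'$, which has size $h - 1$ with required coefficient sum $h - 1$, forces all $|\lambda_i| = 1$ there as well and produces the identical set. Thus $h_{\pm}^{\wedge} A = (h-1)_{\pm}^{\wedge} A'$ as subsets of $\Bbb Z$.

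Setting $h' := h - 1 \geq 3$, the extremality hypothesis translates into
\[
|(h')_{\pm}^{\wedge} A'| = |h_{\pm}^{\wedge} A| = \frac{h(h-1)}{2} + 1 = \frac{h'(h'+1)}{2} + 1,
\]
which is exactly the equality case of Theorem~\ref{thm:4} for the $h'$-element set $A'$ of positive integers. Invoking that theorem, if $h = 4$ (so $h' = 3$) then $A' = \{a_1, a_2, a_1 + a_2\}$ with $0 < a_1 < a_2$, whence $A = \{0, a_1, a_2, a_1 + a_2\}$; if $h \geq 5$ (so $h' \geq 4$) then $A' = d \ast [1, h - 1]$ for some positive integer $d$, whence $A = \{0\} \cup d \ast [1, h - 1] = d \ast [0, h - 1]$.

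No genuine obstacle arises in this reduction; all the depth has been delegated to Theorem~\ref{thm:4}. The only point requiring verification is the set identity $h_{\pm}^{\wedge} A = (h-1)_{\pm}^{\wedge} A'$, but this follows immediately from the tight coefficient-sum constraint together with $0 \in A$, which makes the $h$-fold restricted signed sumset of $A$ indistinguishable from the $(h-1)$-fold one of $A'$.
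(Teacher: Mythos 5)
Your proposal is correct, but there is nothing in this paper to compare it against: Theorem~\ref{thm:6} is quoted from Bhanja--Komatsu--Pandey \cite[Theorem 3.3]{bhanja-kom-pandey2021} as background, and the present paper contains no proof of it. Judged on its own, your reduction is sound and complete. The pigeonhole step is right: with $|A| = h$ and $\sum_{i=1}^{h}|\lambda_i| = h$, $\lambda_i \in \{-1,0,1\}$ forces $|\lambda_i| = 1$ for every $i$, and since the coefficient on $0$ is inert, $h_{\pm}^{\wedge}A = \{\epsilon_1 a_1 + \cdots + \epsilon_{h-1}a_{h-1} : \epsilon_i \in \{\pm 1\}\} = (h-1)_{\pm}^{\wedge}A'$ with $A' = A \setminus \{0\}$ a set of $h-1$ positive integers. (This is the same observation the paper records as Fact (2) of Section~\ref{section-rssp-aux-lemma}, namely $h_{\pm}^{\wedge}A = \min(h_{\pm}^{\wedge}A) + 2 \ast \Sigma(A)$ when $|A| = h$, so signed sums with all coefficients $\pm 1$ biject with subset sums.) The arithmetic $\frac{h(h-1)}{2} + 1 = \frac{h'(h'+1)}{2} + 1$ with $h' = h-1 \geq 3$ places you exactly in the equality case of Theorem~\ref{thm:4}, and the two output cases ($h' = 3$ giving $A' = \{a_1, a_2, a_1+a_2\}$, and $h' \geq 4$ giving $A' = d \ast [1, h-1]$, hence $A = d \ast [0, h-1]$) match the statement, including the boundary $h = 4$. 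All the depth is indeed delegated to Theorem~\ref{thm:4}, which is likewise an imported result here, so your argument shows cleanly that the nonnegative $k = h$ inverse theorem is a formal consequence of the positive one --- a tidy reduction regardless of how the original authors argued.
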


In the same paper, they proved the inverse theorems for $|2_{\pm}^\wedge A|$ also (see \cite[Theorem 2.2, Theorem 2.3, Theorem 3.2, and Theorem 3.3]{bhanja-kom-pandey2021}). It can be verified that the lower bounds in (\ref{Bound A}) is not optimal for $3 \leq h \leq k-1$. For these cases, they conjectured the lower bounds and the inverse results, and proved these conjectures for the case $h = 3$ (see \cite[Theorem 2.5 and Theorem 3.5]{bhanja-kom-pandey2021})). The precise statements of the conjectures are the following:
	
	\begin{conjecture}[{\cite[Conjecture 2.4, Conjecture 2.6]{bhanja-kom-pandey2021}}]\label{Conjecture 1}			
		Let $A$ be a set of $k \geq 4$ positive integers, and let $h$ be an integer with $3 \leq h \leq k-1$. Then
		\begin{equation*}\label{Lower bound Conjecture 1}
			\left|h^{\wedge}_{\pm}A\right| \geq 2hk-h^2 + 1.
		\end{equation*}
		This lower bound is best possible.

 Moreover,  if $|h^{\wedge}_{\pm}A| = 2hk-h^2 + 1,$ then $A = d \ast \{1,3,\ldots, 2k-1\}$ for some positive integer $d$.
	\end{conjecture}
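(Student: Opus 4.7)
The plan is to prove Conjecture~\ref{Conjecture 1} in two main steps: first, establish the direct inequality $|h^{\wedge}_{\pm}A| \geq 2hk - h^2 + 1$ by an explicit construction of signed sums, and then deduce the inverse characterization by combining the equality case of Theorem~\ref{restricted-hfold-inverse-thm} with a careful analysis of when the ``extra'' mixed-sign contributions can be packed as tightly as possible. Throughout, write $A = \{a_1 < a_2 < \cdots < a_k\}$ with $a_1 \geq 1$.

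For the direct lower bound, observe that because $A \subseteq \mathbb{Z}_{>0}$, the subsets $h^{\wedge}A$ and $h^{\wedge}(-A) = -h^{\wedge}A$ of $h^{\wedge}_{\pm}A$ are automatically disjoint: the first lies in $\mathbb{Z}_{>0}$ and the second in $\mathbb{Z}_{<0}$. Theorem~\ref{restricted-hfold-direct-thm} applied to $A$ and to $-A$ therefore yields $2(hk - h^2 + 1) = 2hk - 2h^2 + 2$ distinct sums. To produce the remaining $h^2 - 1$ elements, I plan to exhibit mixed-sign sums obtained by single sign-flips from the standard Nathanson chain: for each sum $S$ in the chain for $h^{\wedge}A$ and each index $i$ whose element $a_i$ appears in the underlying representation of $S$, the value $S - 2a_i$ (together with its symmetric counterpart obtained from the chain for $h^{\wedge}(-A)$) is a candidate mixed sum. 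The main technical obstacle is to show that after removing collisions both with the two pure-sign chains and within the mixed family itself, at least $h^2 - 1$ distinct values remain; I expect this to follow from a careful ordering argument organized by the number of negative coefficients and exploiting the strict monotonicity $a_1 < a_2 < \cdots < a_k$.

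For the inverse direction, suppose $|h^{\wedge}_{\pm}A| = 2hk - h^2 + 1$. Equality forces each sub-bound in the counting above to be tight; in particular $|h^{\wedge}A| = hk - h^2 + 1$. When $3 \leq h \leq k - 2$ (so $k \geq 5$), Theorem~\ref{restricted-hfold-inverse-thm} yields at once that $A$ is a $k$-term arithmetic progression $\{a + id : 0 \leq i \leq k - 1\}$ with positive integers $a, d$. The boundary case $h = k - 1$, for which Theorem~\ref{restricted-hfold-inverse-thm} does not apply, will be treated separately, for instance by an inductive reduction that exploits the already-proved inverse result for smaller $h$ or by a direct structural argument showing that the tightness of the mixed-sum count still forces $A$ to be an AP. Once $A$ is an AP, substituting $a_i = a + (i - 1)d$ into the mixed-sum family shows that the count of new mixed values equals $h^2 - 1$ precisely when those values interleave with the Nathanson chain without any coincidences; a direct comparison of differences reduces this to the single arithmetic identity $d = 2a$. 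Hence $A = a \ast \{1, 3, 5, \ldots, 2k-1\}$, completing the characterization. The two steps I expect to consume the most effort are establishing the uniform lower bound of $h^2 - 1$ on the number of new mixed-sign sums in the direct part and handling the $h = k - 1$ sub-case in the inverse part.
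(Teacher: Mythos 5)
There is a genuine gap, and it sits exactly at the point you label as ``expected to follow from a careful ordering argument.'' Your opening reductions are fine: $h^{\wedge}A$ and $-h^{\wedge}A$ are disjoint subsets of $h^{\wedge}_{\pm}A$ contributing $2(hk-h^2+1)$ elements, so the conjecture amounts to finding $h^2-1$ surviving mixed-sign values. But your proposed family --- single sign-flips $S-2a_i$ of Nathanson-chain sums, organized by the number of negative coefficients --- is essentially the construction that underlies the already-known weaker bound $2(hk-h^2)+\frac{h(h+1)}{2}+1$ of Theorem \ref{thm:3} (Bhanja--Komatsu--Pandey); the missing $\frac{h(h-1)}{2}$ elements are precisely the content of the conjecture, and no uniform ordering of one-flip sums can be expected to produce them, because how many mixed values survive depends delicately on the parity pattern of $A$. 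Indeed, when all $a_i$ are odd the extremal count $h^2-1$ is exactly attained, whereas mixed parities force strictly more elements via the observation that $h^{\wedge}_{\pm}(A\setminus\{a_i\})$ and $h^{\wedge}_{\pm}(A\setminus\{a_j\})$ are disjoint when $a_i\not\equiv a_j \pmod 2$ --- a mod-$2$ mechanism entirely absent from your sketch. The paper does not attack the full set at all: it reduces via Lemma \ref{rssp-basic-lem1} to showing $|h^{\wedge}_{\pm}B|\geq (h+1)^2$ for the bottom $h+1$ elements $B$ (Theorem \ref{rssp-thm-3}), and proves that by a parity case analysis with carefully interleaved chains $B_i$, $C_j$ (Lemmas \ref{rss-lem1}, \ref{rss-lem4}, \ref{rss-lem5}) together with the subset-sums estimate $|\Sigma(B)|\geq h^2-1$ for sets of odd integers (Lemmas \ref{rss-lem2}, \ref{rss-lem3}). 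Your proposal has no counterpart to any of this, and it is where all the difficulty lives.

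The inverse half inherits the gap (your deduction $|h^{\wedge}A|=hk-h^2+1$ is conditional on the unproved ``mixed $\geq h^2-1$'' claim), but granting the direct part, your appeal to Theorem \ref{restricted-hfold-inverse-thm} for $3\leq h\leq k-2$ is a genuinely different and attractive shortcut: the paper instead forces $|h^{\wedge}_{\pm}B|=(h+1)^2$ on the bottom $h+1$ elements, deduces from the proof of Theorem \ref{rssp-thm-3} that $B$ must (after scaling) consist of odd integers, applies its own inverse lemmas to get $B=a_1\ast\{1,3,\ldots,2h+1\}$, and propagates to $A$ via Lemma \ref{rssp-inv-lem}. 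However, the case you defer, $h=k-1$, is not a routine boundary case: there $A$ itself has $h+1$ elements, Theorem \ref{restricted-hfold-inverse-thm} is unavailable, and this is exactly where the bulk of the paper's work goes --- the inverse theory for $(h+1)$-element odd sets in Lemmas \ref{rss-lem6} and \ref{rss-lem7}, plus the separate four- and five-element analyses (Lemmas \ref{rss-lem19} and \ref{rss-lem18}) needed because $h=4$ admits the extra extremal shape $\{a_1,a_2,a_3,a_3+a_2\pm a_1\}$ at the $(h)$-element level. Your final step (an AP with $|h^{\wedge}_{\pm}A|=2hk-h^2+1$ must satisfy $d=2a$) is correct and is proved in \cite{mmp2024}, but as written your plan leaves both of the hard ingredients --- the uniform mixed-sum count and the $h=k-1$ inverse case --- as hopes rather than proofs.
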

	
	\begin{conjecture}[{\cite[Conjecture 3.4, Conjecture 3.7]{bhanja-kom-pandey2021}}]\label{Conjecture 2}
		Let $A$ be a set of $k \geq 5$ nonnegative integers with $0 \in A$, and let $h$ be an integer with $3 \leq h \leq k-1$. Then
		\begin{equation*}
			\left|h^{\wedge}_{\pm}A\right| \geq 2hk - h(h+1) + 1.
		\end{equation*}
		This lower bound is best possible.

 Moreover, if $\left|h^{\wedge}_{\pm}A\right| = 2hk-h(h+1) + 1,$ then $A = d \ast [0,k-1]$ for some positive integer $d$.
	\end{conjecture}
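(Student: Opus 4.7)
The plan is to deduce Conjecture~\ref{Conjecture 2} from Conjecture~\ref{Conjecture 1} (proved in the present paper for sets of positive integers); the base case $h=3$ of Conjecture~\ref{Conjecture 2} is already handled by Bhanja, Komatsu and Pandey. Write $A=\{0\}\cup B$ with $B=\{a_2,a_3,\ldots,a_k\}$, $0<a_2<a_3<\cdots<a_k$, a set of $k-1$ positive integers. The fundamental identity
$$h^{\wedge}_{\pm}A \;=\; h^{\wedge}_{\pm}B \;\cup\; (h-1)^{\wedge}_{\pm}B,$$
obtained by splitting on whether $\lambda_1=0$ or $\lambda_1=\pm1$, is the engine of the proof.

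For the direct bound $|h^{\wedge}_{\pm}A|\geq 2hk-h(h+1)+1$, I would construct a descending chain of distinct elements of $h^{\wedge}_{\pm}A$ starting at the maximum $M=a_{k-h+1}+\cdots+a_k$. Three move-types drive the chain: (i) swap the smallest positive summand $a_{k-h+1}$ for a smaller excluded element $a_j\in B$, staying in $h^{\wedge}_{\pm}B$; (ii) drop a positive summand and activate $\lambda_1=\pm 1$, moving into $(h-1)^{\wedge}_{\pm}B$; (iii) flip the sign of a positive summand $+a_i\mapsto -a_i$. Type~(ii) moves are absent from the Conjecture~\ref{Conjecture 1} chain; they are the new ingredient enabled by $0\in A$ and account for the gap of $h$ between the Conjecture~\ref{Conjecture 1} floor $2h(k-1)-h^2+1$ on $|h^{\wedge}_{\pm}B|$ and the Conjecture~\ref{Conjecture 2} target $2hk-h(h+1)+1$. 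Combining with the symmetric ascending chain from $-M$ through $0$ (using $h^{\wedge}_{\pm}A=-h^{\wedge}_{\pm}A$) produces the required $2hk-h(h+1)+1$ distinct elements.

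For the inverse direction, equality in the direct bound forces the chain above to exhaust $h^{\wedge}_{\pm}A$. A descending induction on the top index, together with the inverse part of Conjecture~\ref{Conjecture 1} applied to $B$ and its subconfigurations, should show that the consecutive gaps $a_{i+1}-a_i$ are all equal to $d:=a_2$, yielding $A=d\ast[0,k-1]$. The subtlety here is that the Conjecture~\ref{Conjecture 2} extremizer $A=d\ast[0,k-1]$ corresponds to $B=d\ast[1,k-1]$, which is \emph{not} the Conjecture~\ref{Conjecture 1} extremizer $d\ast\{1,3,\ldots,2k-3\}$. Consequently, in the equality case of Conjecture~\ref{Conjecture 2}, $|h^{\wedge}_{\pm}B|$ must strictly exceed its Conjecture~\ref{Conjecture 1} minimum by $h$, and this excess is exactly matched by $(h-1)^{\wedge}_{\pm}B$ being absorbed into $h^{\wedge}_{\pm}B$.

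The main obstacle is precisely this mismatch of extremizers: a black-box citation of Conjecture~\ref{Conjecture 1} cannot conclude the inverse step. One needs a joint combined analysis of $h^{\wedge}_{\pm}B$ and $(h-1)^{\wedge}_{\pm}B$ that quantifies the trade-off between the size of $h^{\wedge}_{\pm}B$ and its overlap with $(h-1)^{\wedge}_{\pm}B$, rather than bounding the two pieces separately. A secondary difficulty is the boundary case $h=k-1$, where $h=|B|$ and Conjecture~\ref{Conjecture 1} does not apply to $B$; here Theorem~\ref{thm:3} (and in the inverse step its companion Theorem~\ref{thm:4}) must be used in its place.
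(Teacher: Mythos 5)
You should first note a mismatch of scope: this paper does \emph{not} prove Conjecture~\ref{Conjecture 2}. Both the abstract and the introduction state that its proof ``requires some more works'' and is deferred to the sequel \cite{mistri-prajapati2025b}; the present paper proves only Conjecture~\ref{Conjecture 1} (sets of positive integers). So there is no in-paper proof to compare your proposal against, and I can only assess the proposal on its own terms. Its starting points are sound: for $A=\{0\}\cup B$ the identity $h^{\wedge}_{\pm}A = h^{\wedge}_{\pm}B \cup (h-1)^{\wedge}_{\pm}B$ is correct (split on whether the coefficient of $0$ vanishes), and your bookkeeping is right that Theorem~\ref{rssp-thm-1} applied to $B$ (legitimate only for $3\leq h\leq k-2$, since $|B|=k-1$) gives $2h(k-1)-h^{2}+1$, which is exactly $h$ short of the target $2hk-h(h+1)+1$. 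You also correctly identify that the extremal $B=d\ast[1,k-1]$ for Conjecture~\ref{Conjecture 2} is not the extremizer $d\ast\{1,3,\ldots,2k-3\}$ of Conjecture~\ref{Conjecture 1}, so a black-box citation cannot settle the inverse part.

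However, what you have written is a program, not a proof, and the two decisive steps are missing. For the direct bound, everything hinges on exhibiting $h$ elements of $(h-1)^{\wedge}_{\pm}B$ that avoid $h^{\wedge}_{\pm}B$ \emph{simultaneously} with the full Conjecture-\ref{Conjecture 1} count inside $h^{\wedge}_{\pm}B$; your ``three move-types'' chain is only described, with no disjointness verification, and when $B$ contains both parities the sets $h^{\wedge}_{\pm}B$ and $(h-1)^{\wedge}_{\pm}B$ can overlap heavily, so the needed $h$ new elements do not come for free. (Note that in this paper's proofs of the auxiliary lemmas, such disjointness is always forced by explicit parity or interleaving inequalities such as $\max(B_i)<\min(C_i)<\max(C_i)<\min(B_{i+1})$; nothing of that kind is supplied here.) The boundary case $h=k-1$ is worse than you indicate: there $|B|=h$, Theorem~\ref{thm:3} only gives $|h^{\wedge}_{\pm}B|\geq \frac{h(h+1)}{2}+1$, and the shortfall to the target $h^{2}+h+1$ is $\frac{h(h+1)}{2}$, not $h$, so an entirely different construction is required. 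Finally, the inverse step is explicitly left open in your own text (``should show,'' ``one needs a joint combined analysis''); naming the obstacle is not the same as overcoming it. Your diagnosis is in fact consistent with the authors' decision to defer this conjecture to \cite{mistri-prajapati2025b}, which underscores that the missing joint analysis of $h^{\wedge}_{\pm}B$ and $(h-1)^{\wedge}_{\pm}B$ is substantive, not routine.
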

	
Mohan, Mistri and Pandey confirmed the conjecture for $h =4$, and they also proved the conjectures for certain special types of sets, including arithmetic progression \cite{mmp2024}. In this paper, we prove Conjecture \ref{Conjecture 1} as following two theorems. The proof of Conjecture \ref{Conjecture 2} requires some more works. Hence we prove Conjecture \ref{Conjecture 2} in a subsequent paper \cite{mistri-prajapati2025b}.
	
\begin{theorem}\label{rssp-thm-1}
		Let $h$ and $k$ be positive integers such that $3 \leq h \leq k - 1$. Let $A = \{a_1, \ldots, a_k\}$ be a set of positive integers such that $a_1 < \cdots < a_k$. Then
\begin{equation}\label{rssp-thm-1eq1}
|h_{\pm}^\wedge A| \geq 2hk - h^2 + 1.
\end{equation}
The lower bound in $\eqref{rssp-thm-1eq1}$ is best possible.
\end{theorem}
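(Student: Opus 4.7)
The plan is to prove the lower bound by strong induction on $h$, using the cases $h = 3$ from \cite{bhanja-kom-pandey2021} and $h = 4$ from \cite{mmp2024} as the base cases. The bound is optimal: for $A = d \ast \{1, 3, 5, \ldots, 2k-1\}$ every element of $h^{\wedge}_{\pm}A$ is a signed sum of $h$ distinct odd multiples of $d$, and a direct verification shows $h^{\wedge}_{\pm}A$ equals the full arithmetic progression $d \ast \{-h(2k-h), -h(2k-h) + 2, \ldots, h(2k-h)\}$ of length $2hk - h^2 + 1$.

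For the inductive step ($h \geq 5$), write $A' = A \setminus \{a_k\}$ and decompose
\[h^{\wedge}_{\pm}A \;=\; h^{\wedge}_{\pm}A' \;\cup\; \bigl(a_k + (h-1)^{\wedge}_{\pm}A'\bigr) \;\cup\; \bigl(-a_k + (h-1)^{\wedge}_{\pm}A'\bigr)\]
according to whether the coefficient of $a_k$ is $0$, $+1$, or $-1$. The inductive hypothesis applies to $(h-1)^{\wedge}_{\pm}A'$, and when $k \geq h+2$ it also applies to $h^{\wedge}_{\pm}A'$, yielding $|h^{\wedge}_{\pm}A'| \geq 2h(k-1) - h^2 + 1$. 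The crucial step is to harvest $2h$ additional elements by exploiting positivity. Put $\tau := (a_{k-h} + a_{k-h+1} + \cdots + a_{k-1}) - a_k$; any $a_k + y$ with $y > \tau$ strictly exceeds $\max h^{\wedge}_{\pm}A' = a_{k-h} + \cdots + a_{k-1}$ and is therefore new. I would exhibit $h$ such values of $y$ explicitly: the maximum $M := a_{k-h+1} + \cdots + a_{k-1}$ of $(h-1)^{\wedge}A'$ together with the $h-1$ single-swap sums $M - a_j + a_{k-h}$ for $j = k-h+1, \ldots, k-1$. These are pairwise distinct (since the $a_j$ are), and each one strictly exceeds $\tau$ because $a_j < a_k$. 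By reversing signs, the translate $-a_k + (h-1)^{\wedge}_{\pm}A'$ furnishes $h$ further elements strictly below $\min h^{\wedge}_{\pm}A'$; these are disjoint (by sign) from the first $h$ new elements. Summing yields
\[|h^{\wedge}_{\pm}A| \;\geq\; |h^{\wedge}_{\pm}A'| + 2h \;\geq\; (2h(k-1) - h^2 + 1) + 2h \;=\; 2hk - h^2 + 1,\]
closing the induction whenever $k \geq h + 2$.

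The main obstacle is the boundary case $k = h+1$, where $|A'| = h$ forces the use of Theorem \ref{thm:3} instead of the inductive hypothesis, giving only $|h^{\wedge}_{\pm}A'| \geq h(h+1)/2 + 1$; the simple harvest above then falls short of the target $(h+1)^2 = 2h(h+1) - h^2 + 1$ by $h(h-1)/2$ elements. To close this gap I would either perform a two-index peeling (simultaneously removing $a_k$ and some other element, and analysing the resulting finer decomposition) or produce further new elements by exploiting the abundance of top elements of $(h-1)^{\wedge}_{\pm}A'$ available when $|A'| = h$, for instance by using two-swap configurations $M - a_j - a_{j'} + a_{k-h} + a_{i_0}$ for suitably chosen indices. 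A secondary technical issue, pervasive throughout, is verifying that all the exhibited elements remain distinct across the two translates even when $A$ has clustered gaps; the strict inequality $a_j < a_k$ and the positivity of $A$ are what ultimately rule out coincidences. The extremal set $A = d \ast \{1, 3, \ldots, 2k-1\}$ serves as the guide throughout: on it every step of the count is tight, so the proof must be tailored not to waste any element.
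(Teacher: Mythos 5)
Your peeling step for $k \geq h+2$ is correct: the $h$ swap sums $M$ and $M - a_j + a_{k-h}$ do exceed $\max h^{\wedge}_{\pm}A' = M + a_{k-h}$ exactly because $a_k > a_j$, and negation gives the disjoint lower batch, so $|h^{\wedge}_{\pm}A| \geq |h^{\wedge}_{\pm}A'| + 2h$ and the induction on $k$ closes. This is in substance the same reduction the paper invokes via Lemma~\ref{rssp-basic-lem1}: everything is pushed down to the bottom $h+1$ elements. But that is where the gap lies, and it is not a boundary technicality — the case $k = h+1$, i.e.\ the assertion $|h^{\wedge}_{\pm}B| \geq (h+1)^2$ for every set $B$ of $h+1$ positive integers (Theorem~\ref{rssp-thm-3}), is the entire mathematical content of the paper, occupying all of Section~\ref{section-rssp-aux-lemma}, and your proposal leaves it with only speculative remedies. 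The deficit you acknowledge, $h(h-1)/2$, is quadratic in $h$, while each of your proposed swap families (one-swap, two-swap) produces only $O(h)$ candidates before distinctness breaks down. Worse, the bound $(h+1)^2$ is attained exactly by $B = d \ast \{1, 3, \ldots, 2h+1\}$: when all of $a_1, \ldots, a_{h+1}$ are odd, the three translates in your decomposition $h^{\wedge}_{\pm}B' \cup (a_{h+1} + (h-1)^{\wedge}_{\pm}B') \cup (-a_{h+1} + (h-1)^{\wedge}_{\pm}B')$ all lie in a single residue class modulo $2$ and collide massively, so no general-position harvest can work uniformly; any correct argument must degenerate to exact equality on odd APs. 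This forces a structure-sensitive analysis that your sketch does not supply.

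What the paper actually does at $k = h+1$ is a parity case division that your plan has no counterpart for: if the parities of the $a_i$ are mixed, sumsets $h^{\wedge}_{\pm}A_i$, $h^{\wedge}_{\pm}A_j$ obtained by deleting elements of different parity are automatically disjoint, and explicit block constructions give strictly more than $(h+1)^2$ elements (Lemmas~\ref{rss-lem1}, \ref{rss-lem4}, \ref{rss-lem5}, the last with subcases according to whether $A_2$ is an AP); in the remaining all-odd case (after normalizing the gcd), the paper exploits the identity $|h^{\wedge}_{\pm}C| = |\Sigma(C)|$ for $|C| = h$ to reduce to a sharp subset-sum bound $|\Sigma(C)| \geq h^2 - 1$ (Lemma~\ref{rss-lem2}), then gains exactly $2h+2$ more elements upon restoring the $(h+1)$-st element via a delicate two-element argument with the quantities $\alpha, \beta$ (Lemma~\ref{rss-lem3}). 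None of this machinery is recoverable from "two-index peeling" or "two-swap configurations" as stated. A minor secondary point: in your optimality check you assert that $h^{\wedge}_{\pm}(d \ast \{1,3,\ldots,2k-1\})$ \emph{equals} the full length-$(2hk-h^2+1)$ progression by "direct verification"; surjectivity need not be checked — containment in $[-(2hk-h^2),\, 2hk-h^2]$ together with the fixed parity $x \equiv h \pmod 2$ gives the upper bound $2hk - h^2 + 1$, and equality then follows from the lower bound, as in the paper — but of course this shortcut is only available once the lower bound is actually proved.
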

	
\begin{theorem}\label{rssp-thm-2}
		Let $h$ and $k$ be positive integers such that $3 \leq h \leq k - 1$. Let $A = \{a_1, \ldots, a_k\}$ be a set of positive integers such that $a_1 < \cdots < a_k$. If
		\[|h_{\pm}^\wedge A| = 2hk - h^2 + 1,\]
	    then 
		\[A = a_1 \ast \{1, 3, \ldots, 2k - 1\}.\]
\end{theorem}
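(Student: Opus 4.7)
The plan is to show that the equality hypothesis forces $A$ to be a $k$-term arithmetic progression via Nathanson's inverse theorem (Theorem \ref{restricted-hfold-inverse-thm}), and then to pin down the common difference by an explicit computation. The argument runs in three stages.

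Stage 1 (pure/mixed decomposition): Since $A \subseteq \mathbb{Z}_{>0}$, the sets $h^{\wedge}A \subseteq \mathbb{Z}_{>0}$ and $-h^{\wedge}A \subseteq \mathbb{Z}_{<0}$ are disjoint subsets of $h_{\pm}^{\wedge}A$. Let $M$ denote the set of mixed-sign sums (those with at least one $+1$ and one $-1$ coefficient) in $h_{\pm}^{\wedge}A$ that do not already lie in $h^{\wedge}A \cup (-h^{\wedge}A)$. The equality hypothesis then reads
\[|h_{\pm}^{\wedge}A| = 2|h^{\wedge}A| + |M| = 2hk - h^{2} + 1,\]
and Theorem \ref{restricted-hfold-direct-thm} gives $|h^{\wedge}A| \geq hk - h^{2} + 1$, yielding the upper bound $|M| \leq h^{2} - 1$.

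Stage 2 (forcing the AP structure): Adapting the construction used in the proof of Theorem \ref{rssp-thm-1}, I would exhibit at least $h^{2} - 1$ pairwise distinct elements of $M$, organized as chains of mixed-sign sums indexed by the number $j \in \{1, \ldots, h-1\}$ of negative coefficients, with pairwise distinctness and non-coincidence with elements of $h^{\wedge}A \cup (-h^{\wedge}A)$ established using the strict ordering $a_{1} < \cdots < a_{k}$ and elementary size estimates. Together with Stage 1, this pins down $|M| = h^{2} - 1$ and $|h^{\wedge}A| = hk - h^{2} + 1$. Provided $k \geq 5$ and $h \leq k - 2$, Theorem \ref{restricted-hfold-inverse-thm} then implies that $A$ is a $k$-term arithmetic progression, $A = \{a_{1}, a_{1} + d, \ldots, a_{1} + (k-1)d\}$ for some positive integer $d$. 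The boundary case $h = k - 1$, where Theorem \ref{restricted-hfold-inverse-thm} does not apply and $|h^{\wedge}A| = k$ holds automatically, would be handled by a separate argument directly analyzing the $k$ sums $a_{1} + \cdots + a_{k} - a_{i}$ together with their companion mixed-sign counterparts.

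Stage 3 (determining $d$): With $A$ an AP, every element of $h_{\pm}^{\wedge}A$ takes the form $a_{1} s + d t$, where $s = \sum_{i}\lambda_{i}$ has the same parity as $h$ with $|s| \leq h$, and, for each choice $j = (h - s)/2 \in \{0, \ldots, h\}$, the values of $t = \sum_{i}(i-1)\lambda_{i}$ fill a complete integer interval of length $hk - (h-j)^{2} - j^{2} + 1$. A direct count of the collisions $a_{1}(s_{1} - s_{2}) = d(t_{2} - t_{1})$ across distinct admissible pairs shows that $|h_{\pm}^{\wedge}A|$ attains its minimum $2hk - h^{2} + 1$ precisely when $d = 2 a_{1}$ (the value that maximizes the number of collisions); any other positive $d$ admits strictly fewer collisions and strictly more distinct values, contradicting the equality hypothesis. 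Hence $d = 2 a_{1}$, i.e., $A = a_{1} \ast \{1, 3, 5, \ldots, 2k - 1\}$.

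The main obstacle is the lower bound $|M| \geq h^{2} - 1$ in Stage 2: producing $h^{2} - 1$ mixed-sign sums and rigorously proving their pairwise distinctness and non-coincidence with pure-sign sums (the indexing across the various values of $j$ needs to be calibrated so that no two contributions collapse). Once the AP structure is secured, Stage 3 is a technical but routine collision count in the AP setting.
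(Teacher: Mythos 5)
Your architecture is genuinely different from the paper's (which never invokes Nathanson's inverse theorem: it reduces everything to the $(h+1)$-element subset $B = \{a_1, \ldots, a_{h+1}\}$, forces $|h_{\pm}^\wedge B| = h^2 + 2h + 1$ by a sandwich argument, applies the $k = h+1$ inverse results, and extends by Lemma \ref{rssp-inv-lem}), but your proposal has two genuine gaps. The first is Stage 2: the bound $|M| \geq h^2 - 1$ is the entire difficulty of the problem in disguise, and you do not prove it — without it, Stage 1 cannot rule out the a priori possibility that the equality hypothesis is met with $|h^\wedge A| > hk - h^2 + 1$ and $|M| < h^2 - 1$, and then Theorem \ref{restricted-hfold-inverse-thm} never enters. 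Your claim that the chains of mixed-sign sums can be ``calibrated'' so that they neither collapse nor meet $h^\wedge A \cup (-h^\wedge A)$ is not routine: mixed sums genuinely can coincide with pure sums (e.g.\ $-a_1 + a_3 + a_4 = a_1 + a_2 + a_3$ whenever $a_4 = 2a_1 + a_2$), and the paper's analogous constructions (Lemmas \ref{rss-lem1}, \ref{rss-lem4}, \ref{rss-lem5}, and the subset-sum count of Lemma \ref{rss-lem2}) work only for $k = h + 1$, require a four-way case division on the parities of $a_1, a_2, a_3$, and in the mixed-parity cases yield strictly more than the extremal count — which is precisely the mechanism by which the paper funnels the equality case into the all-odd situation. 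A single uniform construction indexed by the number of negative signs, with no parity analysis, would have to reprove all of this at once.

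The second gap is the boundary case $h = k - 1$ (including $(h,k) = (3,4)$), where Theorem \ref{restricted-hfold-inverse-thm} is inapplicable and which you defer to an unspecified ``separate argument.'' This is not a corner case but the core of the theorem: in the paper's proof the whole inverse problem is resolved by the $k = h + 1$ case, via Lemmas \ref{rss-lem7} and \ref{rss-lem18}, which in turn rest on Lemmas \ref{rss-lem2}, \ref{rss-lem6}, \ref{rss-lem19} and \ref{rss-lem3} and occupy most of Section \ref{section-rssp-aux-lemma}; your one-sentence sketch about the $k$ sums $a_1 + \cdots + a_k - a_i$ defers exactly this hardest content. Stage 3, by contrast, is believable — your interval-length formula $hk - (h-j)^2 - j^2 + 1$ for the $t$-values at fixed $j$ checks out, and the collision count minimizing $|h_{\pm}^\wedge A|$ at $d = 2a_1$ is consistent with the extremal example — but it too is asserted rather than executed, whereas the paper sidesteps this computation entirely by quoting Lemma \ref{rssp-inv-lem} from \cite{mmp2024} once $B$ is known to be an arithmetic progression. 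As it stands, the proposal is a plausible road map whose two load-bearing steps (the lower bound $|M| \geq h^2 - 1$ and the case $h = k-1$) are precisely the parts left unproved.
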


For a set $A \subseteq \Bbb Z$, let $A_{abs} = \{|a|: a \in A\}$. It is easy to verify that if $A$ is a nonempty finite set of integers such that either $A \cap (-A) = \emptyset$ or $A \cap (-A) = \{0\}$, then
\[h_{\pm}^{\wedge} A = h_{\pm}^{\wedge} A_{abs}.\]

This identity and above theorems immediately imply the following theorems.
\begin{theorem}\label{rssp-thm-1a}
		Let $h$ and $k$ be positive integers such that $3 \leq h \leq k - 1$. Let $A$ be a set of $k$ integers such that $A \cap (-A) = \emptyset$. Then
		\[|h_{\pm}^\wedge A| \geq 2hk - h^2 + 1.\]
		This lower bound is best possible.
\end{theorem}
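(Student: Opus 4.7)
The plan is to reduce Theorem \ref{rssp-thm-1a} to Theorem \ref{rssp-thm-1} via the identity $h^{\wedge}_{\pm} A = h^{\wedge}_{\pm} A_{abs}$ stated in the paragraph just preceding the theorem. The reduction will consist of three routine steps.

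First I will verify that $A_{abs}$ is a set of exactly $k$ positive integers. The hypothesis $A \cap (-A) = \emptyset$ rules out $0 \in A$ (otherwise $0 = -0 \in A \cap (-A)$), so every element of $A$ has a strictly positive absolute value. The same hypothesis also forbids distinct $a, b \in A$ from satisfying $|a| = |b|$, because $|a| = |b|$ together with $a \neq b$ forces $a = -b$, which would place $a$ in $A \cap (-A)$. Hence the map $a \mapsto |a|$ is injective on $A$, and $A_{abs}$ is a $k$-element subset of the positive integers.

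Next I will apply the identity $h^{\wedge}_{\pm} A = h^{\wedge}_{\pm} A_{abs}$ and then invoke Theorem \ref{rssp-thm-1} on $A_{abs}$ to conclude
\[|h^{\wedge}_{\pm} A| = |h^{\wedge}_{\pm} A_{abs}| \geq 2hk - h^2 + 1.\]
To establish sharpness, I will exhibit a single set of integers that is disjoint from its negation and attains equality; the natural choice is $A = \{1, 3, 5, \ldots, 2k - 1\}$, which consists entirely of positive integers (so $A \cap (-A) = \emptyset$) and realizes the bound by the sharpness clause of Theorem \ref{rssp-thm-1}.

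Since every step is either a definition chase or a direct appeal to Theorem \ref{rssp-thm-1}, there is essentially no obstacle. The only point requiring a moment's thought is that both failure modes of $a \mapsto |a|$, namely $0 \in A$ and the presence of a $\pm$-pair in $A$, must be ruled out, and both are subsumed by the single hypothesis $A \cap (-A) = \emptyset$.
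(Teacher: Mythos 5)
Your proposal is correct and is essentially the paper's own argument: the paper derives Theorem \ref{rssp-thm-1a} directly from the identity $h_{\pm}^{\wedge}A = h_{\pm}^{\wedge}A_{abs}$ together with Theorem \ref{rssp-thm-1}, exactly as you do. Your explicit verification that $A \cap (-A) = \emptyset$ rules out both $0 \in A$ and $\pm$-pairs (so that $|A_{abs}| = k$), and your use of $A = \{1, 3, \ldots, 2k-1\}$ for sharpness, just spell out details the paper leaves implicit in the word ``immediately.''
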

	
\begin{theorem}\label{rssp-thm-2a}
		Let $h$ and $k$ be positive integers such that $3 \leq h \leq k - 1$. Let $A$ be a set of $k$ integers such that $A \cap (-A) = \emptyset$. If
		\[|h_{\pm}^\wedge A| = 2hk - h^2 + 1,\]
	    then 
		\[A_{abs} = d \ast \{1, 3, \ldots, 2k - 1\},\]
        where $d$ is the smallest element of $A_{abs}$. 
\end{theorem}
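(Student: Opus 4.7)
The plan is to observe that Theorem \ref{rssp-thm-2a} is a direct corollary of Theorem \ref{rssp-thm-2} combined with the identity $h_{\pm}^{\wedge} A = h_{\pm}^{\wedge} A_{abs}$ that is asserted in the paragraph preceding Theorem \ref{rssp-thm-1a}. So the whole argument reduces to reading off the structure of $A_{abs}$ from the positive-integer case.

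First I would record the elementary observation that the hypothesis $A\cap(-A)=\emptyset$ forces $0\notin A$ and also forces the map $a\mapsto |a|$ to be injective on $A$ (since $|a|=|a'|$ with $a\neq a'$ would mean $a'=-a$, contradicting $A\cap(-A)=\emptyset$). Consequently $A_{abs}$ is a set of exactly $k$ positive integers. Next I would verify the identity $h_{\pm}^{\wedge}A = h_{\pm}^{\wedge}A_{abs}$ by noting that for each $a_i\in A$ one has $\lambda_i a_i = \bigl(\lambda_i\,\mathrm{sgn}(a_i)\bigr)\,|a_i|$, and since the map $\lambda_i\mapsto \lambda_i\,\mathrm{sgn}(a_i)$ is an involution on $\{-1,0,1\}$ preserving $|\lambda_i|$, the two restricted signed sumsets coincide as subsets of $\mathbb{Z}$.

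Having these two facts in hand, I would apply Theorem \ref{rssp-thm-2} directly to the set $A_{abs}$ of $k$ positive integers: the equality $|h_{\pm}^{\wedge}A|=2hk-h^{2}+1$ transfers via the identity to $|h_{\pm}^{\wedge}A_{abs}|=2hk-h^{2}+1$, so Theorem \ref{rssp-thm-2} forces $A_{abs}=d\ast\{1,3,\ldots,2k-1\}$, where $d$ is the smallest element of $A_{abs}$. That completes the proof.

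There is no real obstacle here; the only thing that requires even a sentence of care is the verification of the identity $h_{\pm}^{\wedge}A = h_{\pm}^{\wedge}A_{abs}$, since the authors state it without proof. Because the map $\lambda\mapsto\lambda\,\mathrm{sgn}(a_i)$ preserves both the sign constraint $\lambda_i\in\{-1,0,1\}$ and the weight $|\lambda_i|$ used to define the $h$-fold restriction, this identity is immediate, and the rest is invocation of the previously established theorem.
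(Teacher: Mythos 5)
Your proposal is correct and follows exactly the paper's route: the paper likewise deduces Theorem \ref{rssp-thm-2a} immediately from the identity $h_{\pm}^{\wedge}A = h_{\pm}^{\wedge}A_{abs}$ (which it asserts for sets with $A \cap (-A) = \emptyset$) together with Theorem \ref{rssp-thm-2} applied to the $k$-element set $A_{abs}$ of positive integers. Your explicit checks --- that $A\cap(-A)=\emptyset$ makes $a\mapsto|a|$ injective so $|A_{abs}|=k$, and that the sign-flip involution $\lambda_i\mapsto\lambda_i\,\mathrm{sgn}(a_i)$ preserves $|\lambda_i|$ and hence the weight constraint --- merely fill in details the paper leaves to the reader.
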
 

Mohan, Mistri and Pandey proved the following lemma for the restricted signed sumset.
\begin{lemmaa}[{\cite[Lemma $1$]{mmp2024}}]\label{rssp-basic-lem1}
Let $h$ and $k$ be integers such that $3 \leq h \leq k - 1$. Let $A = \{a_1,\ldots, a_k\}$ be a set of integers such that $a_1 < \cdots < a_k$. Let $B = \{a_1, \ldots, a_{h + 1}\} \subseteq A$. If $|h_{\pm}^\wedge B| \geq h^2 + 2h + 1 + t$, where $t \geq 0$, then 
\[|h_{\pm}^\wedge A| \geq 2hk - h^2 + 1 + t.\]
\end{lemmaa}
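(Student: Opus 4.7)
The plan is to argue by induction on $k$, showing that each time we append a new element to the underlying set we gain at least $2h$ fresh elements in the restricted signed sumset. For $h+1 \leq j \leq k$, put $A_j = \{a_1, \ldots, a_j\}$, so that $A_{h+1} = B$ and $A_k = A$. The core inequality to establish is
\[
|h_{\pm}^\wedge A_{j+1}| \geq |h_{\pm}^\wedge A_j| + 2h
\]
for every $h+1 \leq j \leq k-1$. Once this is in hand, iterating from $j = h+1$ up to $j = k-1$ gives
\[
|h_{\pm}^\wedge A| \geq |h_{\pm}^\wedge B| + 2h(k - h - 1) \geq (h^2 + 2h + 1 + t) + 2h(k - h - 1) = 2hk - h^2 + 1 + t,
\]
which is precisely the conclusion of the lemma.

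To prove the core inequality, I will exhibit $2h$ explicit elements of $h_{\pm}^\wedge A_{j+1}$ that do not belong to $h_{\pm}^\wedge A_j$. Let $M_j$ denote the maximum element of $h_{\pm}^\wedge A_j$; by the sign-flip symmetry $(\lambda_1, \ldots, \lambda_j) \mapsto (-\lambda_1, \ldots, -\lambda_j)$ the minimum is $-M_j$, forcing $M_j \geq 0$. In the positive-integer setting in which the main theorems of the paper are stated, $M_j = a_{j-h+1} + a_{j-h+2} + \cdots + a_j$ is the sum of the $h$ largest elements of $A_j$. For each $i \in \{1, 2, \ldots, h\}$, set
\[
E_i \;=\; M_j + a_{j+1} - a_{j-h+i},
\]
obtained from $M_j$ by swapping the summand $a_{j-h+i}$ for $a_{j+1}$. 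Each $E_i$ is a sum of $h$ distinct elements of $A_{j+1}$ with every coefficient equal to $+1$, so $E_i \in h_{\pm}^\wedge A_{j+1}$; the strict inequality $a_{j+1} > a_{j-h+i}$ gives $E_i > M_j$; and the $E_i$ are pairwise distinct because $a_{j-h+1}, \ldots, a_j$ are. Applying the sign-flip symmetry to each $E_i$ yields a second batch $-E_1, \ldots, -E_h$, each strictly below $-M_j$ and hence outside $h_{\pm}^\wedge A_j$; as the $-E_i$ are negative while the $E_i$ exceed $M_j \geq 0$, the two batches are disjoint, furnishing the required $2h$ new elements.

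No serious obstacle is expected: the lemma amounts to identifying the right $2h$ extremal swap candidates at each step, after which the counts follow from the strict ordering $a_1 < \cdots < a_k$. The only delicate point is ensuring that the batches $\{E_i\}_{i=1}^{h}$ and $\{-E_i\}_{i=1}^{h}$ are disjoint, which is automatic in the positive regime since $M_j \geq 0$; the general integer formulation reduces to this case through the negation symmetry $h_{\pm}^\wedge A = h_{\pm}^\wedge(-A)$.
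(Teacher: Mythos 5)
Your induction skeleton and the closing arithmetic are fine, and for sets of \emph{positive} integers the swap construction does prove the per-step gain $|h_{\pm}^\wedge A_{j+1}| \geq |h_{\pm}^\wedge A_j| + 2h$: each $E_i$ lies strictly above $M_j = \max(h_{\pm}^\wedge A_j)$, each $-E_i$ strictly below $-M_j$, and the $2h$ elements are distinct members of $h_{\pm}^\wedge A_{j+1}$. The genuine gap is your final sentence. The lemma is stated for an arbitrary set of \emph{integers}, and the symmetry $h_{\pm}^\wedge A = h_{\pm}^\wedge(-A)$ only reduces the all-negative case to the all-positive one; it does nothing for mixed-sign sets. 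For those, $\max(h_{\pm}^\wedge A_j)$ is the sum of the $h$ largest \emph{absolute values}, not $a_{j-h+1} + \cdots + a_j$; the new element $a_{j+1}$, though largest, need not have the largest absolute value, so your swapped sums $E_i$ need not exceed $M_j$. Worse, the unconditional per-step inequality you rely on is simply false in the mixed-sign case: take $h = 3$, $A_j = \{-2, -1, 1, 2\}$ (so $j = h + 1$, within the range of your induction) and $a_{j+1} = 3$. A direct computation gives $h_{\pm}^\wedge A_j = \{0, \pm 1, \ldots, \pm 5\}$, of size $11$, and $h_{\pm}^\wedge A_{j+1} = \{0, \pm 1, \ldots, \pm 7\}$, of size $15$ --- a gain of only $4 < 2h = 6$.

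Consequently no proof of the stated lemma can proceed by an inequality valid for \emph{every} integer set at every step; a correct argument must somewhere use the hypothesis $|h_{\pm}^\wedge B| \geq h^2 + 2h + 1 + t$, which fails in the example above (there $|3_{\pm}^\wedge B| = 11 < 16 = h^2 + 2h + 1$, so the lemma itself is not contradicted), or otherwise handle the interaction of positive and negative elements. Note that this paper does not reprove the lemma --- it quotes it from \cite{mmp2024} --- so the burden is entirely on your argument, and as written it establishes the statement only for sets of one sign. That restricted version does suffice for the application to Theorem \ref{rssp-thm-1}, where $A$ consists of positive integers, but it does not prove the lemma as stated.
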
 
In view of the above lemma, to prove Theorem \ref{rssp-thm-1}, it suffices to prove the following theorem.
\begin{theorem}\label{rssp-thm-3}
		Let $h$ be an integers such that $h \geq 3$. Let $A = \{a_1, \ldots, a_{h + 1}\}$ be a set of positive integers such that $a_1 < \cdots < a_{h + 1}$. Then
\begin{equation}\label{rssp-thm-3eq1}
 |h_{\pm}^\wedge A| \geq h^2 + 2h + 1.
\end{equation}
The lower bound in $\eqref{rssp-thm-3eq1}$ is best possible.
\end{theorem}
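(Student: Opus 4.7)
The plan is to establish the lower bound by strong induction on $h$, with the base case $h = 3$ supplied by Bhanja, Komatsu and Pandey \cite{bhanja-kom-pandey2021}: taking $k = h+1 = 4$ in their Theorem~2.5 yields $|3_{\pm}^\wedge A| \geq 2 \cdot 3 \cdot 4 - 9 + 1 = 16 = h^2 + 2h + 1$. For the inductive step I fix $h \geq 4$, assume Theorem~\ref{rssp-thm-3} for $h - 1$ (together with its inverse counterpart Theorem~\ref{rssp-thm-2} for parameter $h - 1$, which is proved in parallel), and set $A' = A \setminus \{a_{h+1}\}$, so $|A'| = h$. The induction hypothesis applied to $A'$ with parameter $h - 1$ yields $|(h-1)_{\pm}^\wedge A'| \geq (h-1)^2 + 2(h-1) + 1 = h^2$, while Theorem~\ref{thm:4} applied to $A'$ with $k = h$ gives $|h_{\pm}^\wedge A'| \geq \frac{h(h+1)}{2} + 1$.

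The key inclusion I would exploit is
\[
h_{\pm}^\wedge A \supseteq S_1 \cup S_2 \cup S_3, \qquad S_1 = (h-1)_{\pm}^\wedge A' + a_{h+1},\quad S_2 = -S_1,\quad S_3 = h_{\pm}^\wedge A',
\]
so that it suffices to prove $|S_1 \cup S_2 \cup S_3| \geq (h+1)^2$. I would split into cases based on the size of $a_{h+1}$ relative to $\sigma := a_2 + a_3 + \cdots + a_h$, the maximum element of $(h-1)_{\pm}^\wedge A'$. In the case $a_{h+1} > \sigma$, one has $\min S_1 = a_{h+1} - \sigma > 0 > \max S_2$, so $S_1 \cap S_2 = \emptyset$ and $|S_1 \cup S_2| \geq 2 h^2 \geq (h+1)^2$ (the latter holding for $h \geq 3$). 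In the case $a_{h+1} \leq \sigma$, overlaps among the $S_i$ arise and are handled by a further subdivision: if $|(h-1)_{\pm}^\wedge A'| > h^2$, the slack in the inductive bound absorbs the overlaps; while if $|(h-1)_{\pm}^\wedge A'| = h^2$, then by the inverse result for $h - 1$ (available inductively) one must have $A' = c \ast \{1, 3, \ldots, 2h - 1\}$ for some positive integer $c$, and this rigid arithmetic-progression structure permits an explicit enumeration of $S_1 \cup S_2 \cup S_3$.

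The hardest piece I expect is this last tight subcase, where $A' = c \ast \{1, 3, \ldots, 2h - 1\}$ and $a_{h+1}$ is of moderate size. There $|S_1|, |S_2|$ attain the inductive lower bound exactly and $|S_3|$ is near its minimum, so the overlaps $|S_1 \cap S_2|$ and $|S_i \cap S_3|$ must be tracked with essentially no room for error. The analysis will rest on a parity and arithmetic computation identifying exactly which elements of $S_3$ escape $S_1 \cup S_2$, together with a careful bound on $|S_1 \cap S_2|$ using the explicit AP form of $A'$. The tightness of the lower bound is then immediate from the extremal example $A = a_1 \ast \{1, 3, \ldots, 2h + 1\}$: a direct computation shows that $h_{\pm}^\wedge A$ consists precisely of the $(h+1)^2$ integers in $[-(h^2+2h)a_1,\ (h^2+2h)a_1]$ whose residues modulo $2a_1$ agree with that of $h a_1$.
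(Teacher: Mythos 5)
Your overall route is genuinely different from the paper's: the paper normalizes $\gcd(a_1,\ldots,a_{h+1})=1$ and then splits into cases by the parities of the $a_i$ (Lemmas \ref{rss-lem1}, \ref{rss-lem4}, \ref{rss-lem5}), with the tight case being ``all elements odd,'' handled by a subset-sum bound $|\Sigma(A')|\geq h^2-1$ (Lemma \ref{rss-lem2}) plus an explicit construction of two extra symmetric elements $\pm\alpha$ or $\pm\beta$ (Lemma \ref{rss-lem3}); there is no induction on $h$. Your induction scheme is plausible in outline, and your base case, your case $a_{h+1}>\sigma$ (where $2h^2\geq (h+1)^2$ for $h\geq 3$), and your extremal example are all fine. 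But the middle case contains a genuine gap: when $a_{h+1}\leq\sigma$ and $|(h-1)_{\pm}^\wedge A'|=h^2+t$ with $t\geq 1$, the claim that ``the slack in the inductive bound absorbs the overlaps'' is not an argument, and it cannot be made one as stated. The inverse theorem you invoke bites only at exact equality $t=0$, whereas the deficit caused by overlaps can vastly exceed a slack of $t=1$: if all elements of $A'$ share a parity, then $S_1$ and $S_2$ lie in a single residue class modulo $2$, their ranges $[a_{h+1}-\sigma,\,a_{h+1}+\sigma]$ and $[-a_{h+1}-\sigma,\,\sigma-a_{h+1}]$ overlap in a window of length $2(\sigma-a_{h+1})$, and for $A'$ close to (but not equal to) the extremal progression the intersection $S_1\cap S_2$ can have size on the order of $\sigma-a_{h+1}$, leaving $|S_1\cup S_2|$ short of $(h+1)^2$ by roughly $2h$ while $t$ is tiny. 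You would then need about $2h$ elements of $S_3\setminus(S_1\cup S_2)$, but $S_3=h_{\pm}^\wedge A'$ also has fixed parity and sits inside the same window, and your sketch offers no mechanism to produce them. Closing this would require either a stability (``$t$-robust'') version of the inverse theorem, which neither the paper nor the cited literature provides, or an explicit construction of extra elements --- which is precisely what the paper's Lemma \ref{rss-lem3} does with the elements $\alpha,\beta$ built from $\max(h_{\pm}^\wedge A_{h+1})$ and the symmetry of $C=h_{\pm}^\wedge A_{h+1}\cup h^\wedge A\cup h^\wedge(-A)$.

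Two further points. First, your hardest subcase ($t=0$, so $A'=c\ast\{1,3,\ldots,2h-1\}$ by the inverse result) is only promised, not executed, and it is exactly where the difficulty of the theorem lives; the ``explicit enumeration'' you defer is the analogue of the delicate element-by-element analysis in the paper's Lemmas \ref{rss-lem6}, \ref{rss-lem7}, \ref{rss-lem18} and \ref{rss-lem19}. Second, the ``proved in parallel'' inverse statement at parameter $h-1$ with $k=h$ is not a free ingredient: in the paper that statement rests on the same chain of rigidity lemmas, so your simultaneous induction does not bypass that machinery, and the logical ordering must be arranged so that the full inverse package at parameter $h-1$ is available before the direct bound at parameter $h$ is proved (this can be done, but it needs to be said). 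As it stands, the proposal is an attack plan with the two decisive steps missing, one of which (slack absorbing overlaps) is not merely unproven but based on a quantitatively untenable dichotomy.
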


In Section \ref{section-rssp-aux-lemma}, we prove some auxiliary lemmas which will be used to prove Theorem \ref{rssp-thm-3} (hence Theorem \ref{rssp-thm-1}) and Theorem \ref{rssp-thm-2} in Section \ref{section-rrsp-thm-proof}.

\section{Auxiliary lemmas} \label{section-rssp-aux-lemma}
For a nonempty finite set $A$ of integers, let $\min(A)$, $\max(A)$, $\min_{+}(A)$, $\max_{-}(A)$ denote the smallest, the largest, the second smallest, and the second largest elements of $A$, respectively.  A set $S$ is said to be {\it symmetric} if $x \in S$ implies $-x \in S$. For a subset $A$ of an additive abelian group $G$, if $c \in G$, then we write $c + A$ for $\{c\} + A$. The {\em set of subsums of $A \subseteq G$}, denoted by $\Sigma (A)$ is defined as follows.  
\[\Sigma (A) = \bigg\{\sum_{b \in B}b : B \subseteq A\bigg\}.\]

The following facts will be used frequently in the proofs of lemmas.
\begin{enumerate}
	\item Let $h \geq 3$ be an integer. Let $A = \{a_1,\ldots, a_{h + 1}\}$ be a set of integers such that $a_1 < \cdots < a_{h + 1}$, and
		\[a_i \not \equiv a_j\pmod {2}\]
		for some  $i, j \in [1, h + 1]$, where $i \neq j$. Let $A_i = A \setminus \{a_i\}$, and let $A_j = A \setminus \{a_j\}$. Then the sumsets $h_{\pm}^\wedge A_i$ and $h_{\pm}^\wedge A_j$ are disjoint. This is proved as follows.  Let $x \in h_{\pm}^\wedge A_i \cap h_{\pm}^\wedge A_j$. Then $x = \epsilon_1 a_i + x_1 = x_2 + \epsilon_2 a_j$ for some $x_1 \in (h - 1)_{\pm}^\wedge B$ and $x_2 \in (h - 1)_{\pm}^\wedge B$, where $B = A_i \cap A_j$ and $\epsilon_1, \epsilon_2 \in \{- 1, 1\}$. Since $x_1 \equiv x_2 \pmod 2$, it follows that 
		\[a_i \equiv a_j \pmod 2,\]
		which is a contradiction. Therefore, the sumsets $h_{\pm}^\wedge A_i$ and $h_{\pm}^\wedge A_j$ are disjoint.

	\item Let $h \geq 2$ be an integer. Let $A = \{a_1,\ldots, a_h\}$ be a set of integers such that $a_1 < \cdots < a_h$. Then it is easy to show that
		\[h_{\pm}^\wedge A = \min (h_{\pm}^\wedge A) + 2 \ast \Sigma (A) = \max (h_{\pm}^\wedge A) - 2 \ast \Sigma (A).\]
\end{enumerate}

	\begin{lemma}\label{rss-lem1}
		Let $h \geq 3$ be an integer. Let $A = \{a_1,\ldots, a_{h + 1}\}$ be a set of positive integers such that $a_1 < \cdots < a_{h + 1}$. Furthermore, assume that
		\[a_1 \equiv a_2\pmod {2} ~\text{and}~ a_r \not \equiv a_1 \pmod {2}\]
		for some  $r \in [3, h + 1]$. Then
		\[|h_{\pm}^\wedge A| \geq |h_{\pm}^\wedge A_r| + \frac{h (h + 1)}{2} + 2h + 1,\]
		where $A_r = A \setminus \{a_r\}$. Hence 
		\[|h_{\pm}^\wedge A| \geq h^2 + 3h + 2.\]	
	\end{lemma}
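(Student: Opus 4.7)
The plan is to find $\frac{h(h+1)}{2} + 2h + 1$ elements of $h_\pm^\wedge A$ that parity forces outside $h_\pm^\wedge A_r$. Define $E = \{j \in [1, h+1] \setminus \{r\} : a_j \equiv a_1 \pmod 2\}$; the hypothesis places $\{1, 2\} \subseteq E$. For each $j \in E$, $a_j \not\equiv a_r \pmod 2$, so the first fact of this section gives $h_\pm^\wedge A_j \cap h_\pm^\wedge A_r = \emptyset$, and consequently
\[
|h_\pm^\wedge A| \;\geq\; |h_\pm^\wedge A_r| + |h_\pm^\wedge A_1 \cup h_\pm^\wedge A_2|.
\]
Since Theorem~\ref{thm:3} applied to the $h$-element set $A_r$ yields $|h_\pm^\wedge A_r| \geq \frac{h(h+1)}{2} + 1$, the last assertion of the lemma will follow from the sharper inequality $|h_\pm^\wedge A_1 \cup h_\pm^\wedge A_2| \geq \frac{h(h+1)}{2} + 2h + 1$.

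To attack this, I would invoke the second fact on the $h$-element sets $A_1, A_2$: $h_\pm^\wedge A_j = (s - a_j) - 2\,\Sigma(A_j)$ with $s = \sum_i a_i$. Writing $C = \{a_3, \ldots, a_{h+1}\}$ and $d = (a_2 - a_1)/2 \in \mathbb{Z}_{\geq 1}$ (so $a_2 = a_1 + 2d$), the decompositions $\Sigma(A_1) = \Sigma(C) \cup (a_2 + \Sigma(C))$ and $\Sigma(A_2) = \Sigma(C) \cup (a_1 + \Sigma(C))$ combine with the affine bijection $x \mapsto (s - a_1 - x)/2$ to identify
\[
h_\pm^\wedge A_1 \cup h_\pm^\wedge A_2 \;\longleftrightarrow\; X + \Sigma(C), \qquad X := \{0,\, d,\, a_1 + d,\, a_2\},
\]
and the four shifts in $X$ are pairwise distinct. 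By Theorem~\ref{thm:3} applied to $C$ with fold $h-1$, $|\Sigma(C)| \geq \frac{(h-1)h}{2} + 1$, so the task reduces to proving $|X + \Sigma(C)| \geq \frac{h(h+1)}{2} + 2h + 1$.

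The hard part will be this sumset estimate; the trivial bound $|X + \Sigma(C)| \geq |\Sigma(C)| + 3$ falls $3h - 3$ short of the target, so one must exploit the structure of $\Sigma(C)$. My approach is to pick out the chain of cumulative partial sums $c_0 = 0,\ c_1 = a_3,\ c_2 = a_3 + a_4,\ \ldots,\ c_{h-1} = a_3 + \cdots + a_{h+1}$ inside $\Sigma(C)$. Each gap $c_{i+1} - c_i = a_{i+3} \geq a_3 > a_2 = \operatorname{diam}(X)$ strictly exceeds the block diameter, so the $h$ chain blocks $c_i + X$ are pairwise disjoint and already deposit $4h$ distinct elements in $X + \Sigma(C)$. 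It then remains to show that each of the $|\Sigma(C)| - h \geq \frac{(h-1)(h-2)}{2}$ non-chain subset sums $\sigma \in \Sigma(C)$ contributes at least one further fresh element: either $\sigma$ itself (when $\sigma$ lies in one of the gaps between chain blocks) or one of $\sigma + d,\ \sigma + a_1 + d,\ \sigma + a_2$ (when $\sigma = c_i + t$ sits inside some chain block, via a direct case analysis using the inequalities $0 < d \leq a_1 + d < a_2 < a_3 \leq a_{i+3}$). Summing yields $4h + \frac{(h-1)(h-2)}{2} = \frac{h(h+1)}{2} + 2h + 1$ distinct elements, completing the proof.
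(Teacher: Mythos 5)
Your first two reductions are correct, and in substance they recast exactly what the paper does: the parity fact gives $(h_{\pm}^\wedge A_1 \cup h_{\pm}^\wedge A_2) \cap h_{\pm}^\wedge A_r = \emptyset$, hence $|h_{\pm}^\wedge A| \geq |h_{\pm}^\wedge A_r| + |h_{\pm}^\wedge A_1 \cup h_{\pm}^\wedge A_2|$, and your affine identification $|h_{\pm}^\wedge A_1 \cup h_{\pm}^\wedge A_2| = |X + \Sigma(C)|$ with $X = \{0, d, a_1 + d, a_2\}$ is a clean reformulation (the paper likewise produces its $\frac{h(h+1)}{2} + 2h + 1$ extra elements inside $h_{\pm}^\wedge A_1 \cup h_{\pm}^\wedge (A \setminus \{a_2\})$, so your target inequality is true). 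The chain-block count of $4h$ elements is also fine. The genuine gap is the last step: the claim that \emph{each} non-chain subset sum contributes at least one further fresh element, followed by ``summing,'' tacitly requires the contributed elements to be pairwise distinct, and this is false --- by pigeonhole it cannot be repaired by any cleverer choice of representatives. Take $h = 5$, $a_1 = 2$, $a_2 = 4$ (so $d = 1$, $X = \{0,1,3,4\}$) and $C = \{5,6,7,9\}$, i.e.\ $A = \{2,4,5,6,7,9\}$, which satisfies the hypotheses since $a_3 = 5$ is odd. All $16$ subset sums of $C$ are distinct, the chain is $\{0, 5, 11, 18, 27\}$, and the five blocks contribute $20$ elements; but $X + \Sigma(C) = \{0,1,3,4\} \cup [5, 28] \cup \{30, 31\}$ has only $30$ elements, leaving just $10$ fresh elements for the $11$ non-chain sums. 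So the bound your summation actually asserts, $|X + \Sigma(C)| \geq 4h + (|\Sigma(C)| - h) = 31$, is false here, even though the inequality you need, $|X + \Sigma(C)| \geq 26$, happens to hold with room to spare.

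The interactions you defer to ``a direct case analysis'' are the entire difficulty: in the dense part of $\Sigma(C)$ consecutive subset sums can differ by less than $d$, so the translates overlap heavily (already $\sigma' = \sigma + d$ forces $\sigma + a_2 = \sigma' + (a_1 + d)$), and a gap element $\sigma$ can coincide with a shifted element $\sigma' + x$ from another sum. The paper avoids any counting over all of $\Sigma(C)$: it selects an explicit staircase family $B_0, \ldots, B_h \subseteq h_{\pm}^\wedge A_1$ of total size $\frac{h(h+1)}{2} + 1$ together with $2h$ elements $C_1 \cup \cdots \cup C_h \subseteq h_{\pm}^\wedge (A \setminus \{a_2\})$, and distinctness is immediate from the interleaving $\max(B_i) < \min(C_{i+1}) < \max(C_{i+1}) < \min(B_{i+1})$. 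In your coordinates this amounts to attaching the two middle shifts $d$ and $a_1 + d$ only at the $h$ chain points and otherwise using a staircase subfamily of subset sums with gaps exceeding $a_2$, so that every chosen element is separated from the others by the ordering. To rescue your route you would need such a selection, or a split on $N = |\Sigma(C)|$: when $N \geq \frac{h(h+1)}{2} + 2h - 2$ the trivial bound $|X + \Sigma(C)| \geq N + 3$ already meets the target, but in the intermediate range a structured choice of representatives is still required, so the case split alone does not close the gap.
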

	
	\begin{proof}  
		Let $A_1 = A \setminus \{a_1\}$. Then
		\[h_{\pm}^\wedge A_r \cup h_{\pm}^\wedge A_1 \subseteq h_{\pm}^\wedge A.\]
		Let $u = - a_2 - \cdots - a_{h + 1}$. Define
		\begin{align*}
			B_0 &= \{u\},\\
			B_1 &= u + 2 \ast \{a_i : i = 2, \ldots, h + 1\}.
		\end{align*}
		Furthermore, for $j = 2, \ldots, h$, define
		\[B_j = u + 2 \ast \{a_{h - j + 3} + \cdots + a_{h + 1} + a_i : i = 2, \ldots, h - j + 2\}. \]
		Since 
		\[\max (B_i) < \min (B_{i + 1})\]
		for $i = 0, \ldots, h - 1$, it follows that the sets $B_i$ are pairwise disjoint. Since $B_j \subseteq  h_{\pm}^\wedge A_1$ for $j = 0, \ldots, h$, it follows that 
		\[B_0 \cup \cdots \cup B_h \subseteq h_{\pm}^\wedge A_1 \subseteq h_{\pm}^\wedge A.\]
		Since the sumsets $h_{\pm}^\wedge A_1$ and $h_{\pm}^\wedge A_r$ are disjoint, it follows that $B_0 \cup \cdots \cup B_h$ and $h_{\pm}^\wedge A_r$ are disjoint sets. Hence
		\[h_{\pm}^\wedge A \supseteq h_{\pm}^\wedge A_r \cup B_0 \cup \cdots \cup B_h,\]
		and so
		\begin{align*}
			|h_{\pm}^\wedge A| &\geq |h_{\pm}^\wedge A_r| + \sum_{j = 0}^{h} |B_j|\\
			&= |h_{\pm}^\wedge A_r| + 1 + \sum_{j = 1}^{h}(h - j + 1)\\
			&= |h_{\pm}^\wedge A_r| + \frac{h(h + 1)}{2} + 1.
		\end{align*}
		To prove the lemma, it suffices to construct a set of $2h$ more elements of $h_{\pm}^\wedge A$ distinct from the elements of $h_{\pm}^\wedge A_r \cup (B_0 \cup \cdots \cup B_h)$. Let $v = - a_3 - \cdots - a_{h + 1}$. Define
		\[C_1 = \{a_1 + v, - a_1 + v\}.\]
		For $j = 2, \ldots, h$, define
		\[C_j = \{a_1 + v + 2(a_{h - j + 3} + \cdots + a_{h + 1 }), - a_1 + v + 2(a_{h - j + 3} + \cdots + a_{h + 1 })\}.\]
		It is easy to see that
		\[\max (B_i) < \min (C_{i + 1 }) < \max (C_{i + 1}) < \min (B_{i + 1})\]
		for $i = 0, \ldots, h - 1$. Thus all sets $B_i$ and $C_j$ are pairwise disjoint. Let
		\[S = C_1 \cup \cdots \cup C_h.\] 
		Then $S \subseteq h_{\pm}^\wedge A$. Now, we show that $S$ and $h_{\pm}^\wedge A_r$ are disjoint sets. Let $C = A \setminus \{a_2, a_r\}$. Let $x \in h_{\pm}^\wedge A_r \cap S$. Then $x = \epsilon_1 a_2 + x_1 = x_2 + \epsilon_2 a_r$ for some $x_1 \in (h - 1)_{\pm}^\wedge C$ and $x_2 \in (h - 1)_{\pm}^\wedge C$, where $\epsilon_1, \epsilon_2 \in \{- 1, 1\}$. Since $x_1 \equiv x_2 \pmod 2$, it follows that $a_r \equiv a_2 \pmod 2$ which is a contradiction. Therefore, the sumset $h_{\pm}^\wedge A_r$ and the set $S$ are disjoint. Since $|S| = 2h$, the set $S$ contains $2h$ elements of $h_{\pm}^\wedge A$ distinct from the elements of  $h_{\pm}^\wedge A_r \cup (B_0 \cup \cdots \cup B_h)$. Therefore,
		\begin{align*}
			|h_{\pm}^\wedge A| &\geq |h_{\pm}^\wedge A_r| + \sum_{j = 0}^{h} |B_j| + |S|\\
			&= |h_{\pm}^\wedge A_r| + \frac{h(h + 1)}{2} + 2h + 1.
		\end{align*}
		Now it follows from Theorem \ref{thm:3} that
		\[|h_{\pm}^\wedge A| \geq h^2 + 3h + 2.\]
		This completes the proof.
\end{proof}

\begin{lemma}\label{rss-lem2}
		Let $h \geq 3$ be an integer. Let $A = \{a_1, a_2,\ldots, a_{h}\}$ be a set of odd positive integers such that $a_1 < \cdots < a_{h}$. Then 
		\begin{equation}\label{eq:13}
			|h_{\pm}^\wedge A|\geq h^2 - 1.
		\end{equation}	
		The lower bound in \eqref{eq:13} is best possible.
\end{lemma}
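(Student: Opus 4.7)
My plan is to first reduce, via item (2) of the Facts preceding Lemma~\ref{rss-lem1}, the lemma to the statement $|\Sigma(A)|\ge h^2-1$: since $|A|=h$, we have $h_{\pm}^{\wedge}A=\min(h_{\pm}^{\wedge}A)+2\ast\Sigma(A)$, which yields $|h_{\pm}^{\wedge}A|=|\Sigma(A)|$. The crucial structural input comes from the parity dichotomy: since every $a_i$ is odd, the subset sum $\sum_{a\in S}a$ is congruent to $|S|$ modulo $2$. Writing $\Sigma_k(A)$ for the set of sums of $k$-element subsets (i.e., the restricted $k$-fold sumset $k^{\wedge}A$), this gives a disjoint union $\Sigma(A)=\bigl(\bigcup_{k\text{ even}}\Sigma_k(A)\bigr)\sqcup\bigl(\bigcup_{k\text{ odd}}\Sigma_k(A)\bigr)$. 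Nathanson's Theorem~\ref{restricted-hfold-direct-thm} gives $|\Sigma_k(A)|\ge k(h-k)+1$ for $1\le k\le h-1$, and trivially $|\Sigma_0(A)|=|\Sigma_h(A)|=1$.

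I plan to proceed by induction on $h$. For the base case $h=3$, I would check directly that the $8$ values $\epsilon_1 a_1+\epsilon_2 a_2+\epsilon_3 a_3$ with $\epsilon_i\in\{-1,1\}$ are pairwise distinct: any coincidence forces a relation $\delta_1 a_1+\delta_2 a_2+\delta_3 a_3=0$ with $\delta_i\in\{-1,0,1\}$ not all zero; exactly one nonzero $\delta$ fails by positivity, exactly two nonzero $\delta$'s would force $a_i=a_j$ contradicting distinctness, and three nonzero $\delta$'s force an identity $a_l=a_i+a_j$ with $i,j,l$ distinct, which is impossible since the sum of two odd integers is even. Hence $|\Sigma(A)|=8=h^2-1$.

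For the inductive step from $h$ to $h+1$, I would set $A'=A\setminus\{a_{h+1}\}$ and use the identity $\Sigma(A)=\Sigma(A')\cup(a_{h+1}+\Sigma(A'))$, which yields
\[
|\Sigma(A)|=2|\Sigma(A')|-\bigl|\Sigma(A')\cap(a_{h+1}+\Sigma(A'))\bigr|.
\]
The inductive hypothesis gives $|\Sigma(A')|\ge h^2-1$. When $a_{h+1}$ exceeds $a_1+\cdots+a_h$, the intersection is empty and the required bound $|\Sigma(A)|\ge(h+1)^2-1$ is immediate. The hard part will be the case $a_{h+1}\le a_1+\cdots+a_h$, where I would have to show the intersection has size at most $2|\Sigma(A')|-((h+1)^2-1)$; this bound is attained by the extremal arithmetic progression $A=\{1,3,\ldots,2h+1\}$, so the analysis has to be sharp. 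My approach would be to split $\Sigma(A')$ into its even- and odd-cardinality contributions, apply Nathanson's per-layer bound, and compare the consecutive-layer overlaps $|\Sigma_k(A')\cap\Sigma_{k+2}(A')|$ against what occurs in the arithmetic progression case.

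For tightness, a direct computation with $A=\{1,3,\ldots,2h-1\}$ shows that $\Sigma_k(A)$ consists of all integers of parity $k$ in $[k^2,k(2h-k)]$, so $|\Sigma_k(A)|=k(h-k)+1$; accounting for the consecutive-layer overlaps $|\Sigma_k(A)\cap\Sigma_{k+2}(A)|=\max(k(h-k-2)-1,0)$ then produces exactly $h^2-1$, matching the lower bound.
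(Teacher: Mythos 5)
Your reduction to $|\Sigma(A)|\ge h^2-1$, your base case $h=3$, the identity $|\Sigma(A)|=2|\Sigma(A')|-|\Sigma(A')\cap(a_{h+1}+\Sigma(A'))|$, and your tightness computation for $\{1,3,\ldots,2h-1\}$ are all correct. But the entire content of the lemma sits in the case $a_{h+1}\le a_1+\cdots+a_h$ of your inductive step, and there you only announce a plan. What you would need is the sharp estimate $|\Sigma(A')\cap(a_{h+1}+\Sigma(A'))|\le 2|\Sigma(A')|-\bigl((h+1)^2-1\bigr)$, and the tools you name cannot deliver it: Theorem~\ref{restricted-hfold-direct-thm} gives \emph{lower} bounds on the layer sizes $|\Sigma_k(A')|$, and lower bounds on layers give no upper control on the overlaps $|\Sigma_k(A')\cap\Sigma_{k+2}(A')|$, nor on the intersection with the translate $a_{h+1}+\Sigma(A')$, for a general odd set. ``Comparing against what occurs in the arithmetic progression case'' is not an argument here, because the near-extremal sets $A'$ to which the inductive hypothesis applies with (near) equality are not only arithmetic progressions: for $h=3$ \emph{every} set of three odd positive integers satisfies $|\Sigma(A')|=8=3^2-1$, and for $h=4$ Lemma~\ref{rss-lem19} shows that equality $|\Sigma(A')|=15$ holds for the three-parameter families $\{a_1,a_2,a_3,a_3+a_2+a_1\}$ and $\{a_1,a_2,a_3,a_3+a_2-a_1\}$. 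So already in the step from $h=3$ to $h=4$ you must prove $|\Sigma(A')\cap(a_4+\Sigma(A'))|\le 1$ for \emph{every} odd triple $A'$ and every admissible odd $a_4$, and from $h=4$ to $h=5$ you must handle all the non-AP extremal quadruples; your sketch addresses none of this.

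By contrast, the paper does not induct at all: it directly exhibits $h^2-1$ pairwise distinct elements of $\Sigma(A)$ for an arbitrary odd set, namely a chain of blocks $B_0,\ldots,B_h$ and $C_1,\ldots,C_{h-2}$ (pairwise disjoint by $\max/\min$ comparisons and a parity check, totalling $h^2-h+2$ elements) together with $h-3$ further elements $\alpha_j$, each chosen by the dichotomy $\delta_j=a_{h-j+1}-a_{h-j}\ge a_1+a_2$ versus $\delta_j<a_1+a_2$ so that $\alpha_j$ lands in a gap of the chain either way. That dichotomy is exactly the mechanism that accommodates the non-AP extremal sets, which is what your intersection bound would have to rediscover. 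As it stands, your argument proves the lemma only in the lacunary case $a_{h+1}>a_1+\cdots+a_h$ (where indeed $2(h^2-1)\ge(h+1)^2-1$ for $h\ge 3$); the hard case is a genuine gap, and the specific route you propose for it is unlikely to close without an essentially new idea.
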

	
	\begin{proof}
		Since $|h_{\pm}^\wedge A| = |\Sigma (A)|$, it is enough to show  that 
		$|\Sigma (A)| \geq h^2 - 1$. In case of $h = 3$, we note that $a_3 \neq a_1 + a_2$ because $a_1$, $a_2$, $a_3$ are odd. Hence
		\[\{0, a_1, a_2, a_3, a_1 + a_2, a_1 + a_3, a_2 + a_3 , a_1 + a_2 + a_3\} = \Sigma (A),\]
		and so
		\[|\Sigma (A)| = 8 = 3^2 - 1.\]
		Now assume that $h \geq 4$. Define the subsets $B_0, B_1, \ldots, B_h, C_1, C_2, \ldots, C_{h - 2}$ of $\Sigma (A)$ as follows: 
		\begin{align*}
			B_0 & = \{0\},\\
			B_1 & = \{a_i : i = 1, \ldots, h\},\\
			C_1 & = a_1 + \{a_i : i = 2, \ldots, h - 1\}.
		\end{align*}
		For $j = 2, \ldots, h$, we define
		\[B_j = \{a_i : i = 1, 2, \dots, h + 1 - j\} + a_{h - j + 2} + \cdots + a_{h}.\] 
		Furthermore, for $j = 2, \ldots, h - 2$, define
		\[C_j = a_1 + \{a_i : i = 2, \dots, h - j\} + a_{h - j + 2} + \cdots + a_{h}.\]
		Observe the following:
		\begin{enumerate} 
			\item Since 
			\[\max (B_i) < \min (B_{i + 1})\]
			for $i = 0, 1, \ldots, h$, it follows that sets  $B_0, B_1, \ldots, B_h$ are pairwise disjoint. 
			\item  Similarly all $C_j$ are disjoint for $j \in [1, h - 2]$.
			\item  For each $i \in [1, h - 2]$, by comparing the elements of $B_i$ and $C_i$ modulo $2$, we see that the sets $B_i$ and $C_i$ are disjoint.
			\item Since 
			\[\max (B_i) < \min (C_{i + 1})\]
			for each $i \in [1, h - 3]$, it follows that $B_i$ and $C_{i + 1}$ are disjoint for each $i \in [1, h - 3]$.
			\item Since 
			\[\max (C_i) < \min (B_{i + 1})\]
			for each $i = [1, h - 2]$, it follows that $C_i$ and $B_{i + 1}$ are disjoint for each $i \in [1, h - 2]$.
			\item Since 
			\[\max (C_{h - 2}) < \min (B_h),\]
			it follows that $C_{h - 2}$ and $B_h$ are disjoint.
			\item Clearly, for each $i \in [1, h - 2]$, the set $C_i$ is disjoint from $B_0$.
		\end{enumerate}
		From the above observation we see that the sets $B_i$ and $C_j$ are pairwise disjoint. Let 
		\[X = (B_0 \cup \cdots \cup B_h) \cup (C_1 \cup \cdots \cup C_{h - 2}).\]
		Since $X \subseteq \Sigma (A)$, it follows that
		\begin{align*}
			|\Sigma (A)| \geq |X| &= \sum_{j = 0}^{h} |B_j| + \sum_{j = 1}^{h - 2} |C_j|\\
			&= \sum_{j = 1}^{h}|B_j| + 1 + \sum_{j = 1}^{h - 2}|C_j|\\
			&= \sum_{j = 1}^{h}(h - j + 1) + 1 + \sum_{j = 1}^{h - 2}(h - j - 1)\\
			&= h^2 - h + 2. 
		\end{align*}
		To prove the lemma, it suffices to construct $h - 3$ more elements in $\Sigma (A)$ distinct from the elements of $X$. For each $j \in [1, h - 3]$, let 
		\[\delta_j = \max (B_j) - {\max}_{-} (B_{j}) = a_{h - j + 1} - a_{h - j},\]
and
		\begin{equation}
			\alpha_j = 
			\begin{cases}
				{\max}_{-} (B_{j}) + a_2, &~\text{if}~  \delta_j \geq a_1 + a_2;\\
				{\max}_{-} (B_{j}) + a_1 + a_2, &~\text{if}~ \delta_j < a_1 + a_2.   
			\end{cases}
		\end{equation}
Let
		\[Y = \{\alpha_j : j \in  [1, h - 3]\}.\]
Now observe the following.
		\begin{enumerate}
			\item  If $\delta_j \geq a_1 + a_2$, then
			\[{\max}_{-} (B_{j}) < \alpha_j < \max (B_j),\] 
			and thus the element $\alpha_j$ is an extra element of $\Sigma (A)$ distinct from the elements of $X$.
			\item  If $\delta_j < a_1 + a_2$, then
			\[\max (B_j) < \alpha_j < \min (C_{j + 1}),\]
			and thus the element $\alpha_j$ is an extra element of $\Sigma (A)$ distinct from the elements of $X$. 
		\end{enumerate}
		Thus for each $j \in [1, h - 3]$, we get one extra element and these elements are in $\Sigma (A)$, which are different elements from the elements of $X$. 
		Furthermore, it is easy to see that the sets $X$ and $Y$ are disjoint subsets of $\Sigma (A)$. Let $S = X \cup Y$. Then $S \subseteq \Sigma (A)$, and so
		\begin{align*}
			|\Sigma (A)| \geq |S| &= \sum_{j = 0}^{h} |B_j| + \sum_{j = 1}^{h - 2} |C_j| + |Y|\\
			&= \sum_{j = 0}^{h} |B_j| + \sum_{j = 1}^{h - 2} |C_j| + (h - 3)\\
			&= (h^2 - h + 2) + (h - 3)\\
			&=  h^2 - 1.
		\end{align*}
		Therefore,
		\begin{equation}\label{eq:14}
			|\Sigma (A)| \geq h^2 - 1. 
		\end{equation}
		Next we show that this lower bound is best possible. Let $h \geq 3$ be an integer, and let $A = \{1, 3, \ldots, 2h - 1\}$. Then 
		\[\Sigma (A) \subseteq [0, h^2],\]
		It is easy to see that 
		\[2 \notin \Sigma (A)\ \text{and}\ h^2 - 2 \notin \Sigma (A).\]
		Therefore,
		\[\Sigma (A) \subseteq [0, h^2] \setminus \{2, h^2 - 2\},\]
		and so
		\[|\Sigma (A)| \leq (h^2 + 1) - 2 = h^2 - 1.\]
		This inequality together with the inequality \eqref{eq:14} implies that
		\[|\Sigma (A)| = h^2 - 1,\]
		and so
		\[|h_{\pm}^\wedge A| = |\Sigma (A)| = h^2 - 1.\]
		Thus the lower bound in \eqref{eq:13} is best possible. This completes the proof.
	\end{proof}
	
	\begin{lemma}\label{rss-lem6}
		Let $h \geq 5$ be an integer. Let $A = \{a_1, a_2, \ldots, a_{h}\}$ be a set of odd positive integers such that $a_1 < a_2 < \cdots < a_{h}$. Then 
		\[|h_{\pm}^\wedge A| = h^2 - 1,\]
		if and only if,
		\[A = a_1 \ast \{1, 3, \ldots, 2h - 1\}.\]	
	\end{lemma}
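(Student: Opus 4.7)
The ``if'' direction appears as the last paragraph of the proof of Lemma \ref{rss-lem2}, where $A = \{1, 3, \ldots, 2h - 1\}$ is shown to attain the bound, and the scaling identity $h_{\pm}^\wedge(c \ast A) = c \ast h_{\pm}^\wedge A$ extends this to $A = a_1 \ast \{1, 3, \ldots, 2h - 1\}$. For the converse, the identity $|h_{\pm}^\wedge A| = |\Sigma(A)|$ from the second fact at the start of this section rewrites the hypothesis as $|\Sigma(A)| = h^2 - 1$. The proof of Lemma \ref{rss-lem2} constructs an explicit union $X \cup Y = (B_0 \cup \cdots \cup B_h) \cup (C_1 \cup \cdots \cup C_{h-2}) \cup \{\alpha_1, \ldots, \alpha_{h-3}\}$ of cardinality $h^2 - 1$ inside $\Sigma(A)$, so the hypothesis forces $\Sigma(A) = X \cup Y$ as a set.

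The plan is to convert this set equality into the structural statement in two stages. In the first stage, subset sums of $A$ that are not a priori listed in $X \cup Y$ (for instance, the two-element sums $a_i + a_j$ with $2 \leq i < j \leq h - 1$, or three-element sums with indices in the ``middle'') must coincide with some element of $X \cup Y$. Using the ordering of $A$, the odd-parity constraint, and the placement of the $B_\ell$'s, $C_\ell$'s, and $\alpha_\ell$'s along the real line (whose gaps are governed by $\delta_j = a_{h - j + 1} - a_{h - j}$ and the binary choice $\alpha_j \in \{\max_{-}(B_j) + a_2, \max_{-}(B_j) + a_1 + a_2\}$), each such missing sum admits only a short list of candidate matches. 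The resulting coincidences yield linear relations such as $a_{j+1} - a_j = a_{i+1} - a_i$, which propagate to force all consecutive differences of $A$ equal, so $A$ is an arithmetic progression. Since each $a_i$ is odd, the common difference is even, giving $A = \{a_1, a_1 + 2d, \ldots, a_1 + 2d(h - 1)\}$ for some positive integer $d$.

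In the second stage, compute $|\Sigma(A)|$ for such an AP. The subset sums of size $r$ form the arithmetic progression $\{r a_1 + 2d s : s \in [r(r-1)/2, r(2h - r - 1)/2]\}$ of common difference $2d$ and length $r(h - r) + 1$, so summing over $r = 0, \ldots, h$ gives $(h+1)(h^2 - h + 6)/6$ elements counted with multiplicity. Elements of the $r$-th layer have parity $r \bmod 2$, so overlaps between layers of different parity vanish; overlaps between same-parity layers are controlled by the value of $a_1/d$. A direct case analysis shows that the overlaps reduce the total to exactly $h^2 - 1$ if and only if $d = a_1$, yielding $A = a_1 \ast \{1, 3, \ldots, 2h - 1\}$.

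The main obstacle is the case analysis in the first stage: the candidates for matching each missing subset sum depend on whether $\delta_j \geq a_1 + a_2$, the regime that determines $\alpha_j$ in the proof of Lemma \ref{rss-lem2}. I plan to handle the base case $h = 5$ directly and, for $h \geq 6$, to proceed either by a similar but more elaborate direct analysis or by induction, applying the inductive hypothesis to $A \setminus \{a_1\}$ or to $A \setminus \{a_h\}$ after first showing that these subsets also attain the bound of Lemma \ref{rss-lem2}.
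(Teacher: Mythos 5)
Your outline follows the paper's proof in its decisive first stage: the paper likewise observes that $|\Sigma(A)| = h^2 - 1$ forces $\Sigma(A)$ to coincide exactly with the set $S = X \cup Y$ constructed in Lemma \ref{rss-lem2}, and then converts tightness into structure by exhibiting, for each hypothetical violation, a subset sum squeezed strictly between consecutive listed elements of $S$. Concretely, the paper's Claim 1 rules out $\delta_j \geq a_1 + a_2$ (so that $\alpha_j = {\max}_{-}(B_j) + a_1 + a_2$ always, and the binary choice you mention disappears), Claim 2 forces $\delta_j = a_2 - a_1$ for all $j \in [1, h-3]$, giving $a_2 - a_1 = a_4 - a_3 = \cdots = a_h - a_{h-1}$, and an interleaving of the two chains $a_{h-1} + a_1 + a_{j+1}$ and $a_h + a_1 + a_j$ supplies the remaining consecutive differences, so $A$ is an arithmetic progression. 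Where you diverge is the endgame. You propose to count $|\Sigma(A)|$ for a general odd AP with common difference $2d$ and to show by an overlap analysis that equality holds iff $d = a_1$. The paper does not need this: staying inside the same tightness framework, it forces one further coincidence, $a_h = a_{h-2} + a_1 + a_2$, which combined with the AP property gives $a_1 + a_2 = 2(a_2 - a_1)$, i.e. $a_2 = 3a_1$, hence common difference $2a_1$. This is cheaper than your layer-counting, whose overlap condition $t a_1 = d(s - s')$ genuinely requires a divisibility case analysis (for instance $a_1 = 3$, $d = 1$, $h = 5$ gives $|\Sigma(A)| = 26 > 24$, so your ``iff $d = a_1$'' is true but not automatic).

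The one genuinely problematic element is your proposed inductive fallback for $h \geq 6$: to apply the inductive hypothesis to $A' = A \setminus \{a_h\}$ (or $A \setminus \{a_1\}$) you must first show that $|\Sigma(A)| = h^2 - 1$ forces $|\Sigma(A')| = (h-1)^2 - 1$. That requires an inequality of the form $|\Sigma(A)| \geq |\Sigma(A')| + (2h - 1)$ for sets of odd integers, and the obvious containment $\Sigma(A) \supseteq \Sigma(A') \cup \bigl(a_h + \Sigma(A')\bigr)$ guarantees only about $h$ new elements (by the symmetry $s \mapsto \sigma' - s$ of $\Sigma(A')$, the sums exceeding $\max \Sigma(A')$ correspond to the elements of $\Sigma(A')$ below $a_h$, of which only $0, a_1, \ldots, a_{h-1}$ are certain). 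No such passage lemma appears in the paper, which instead runs a single direct argument uniform in $h \geq 5$, with no induction and no separate base case; note also that uniqueness fails at $h = 4$ (Lemma \ref{rss-lem19} gives two families), so any induction must indeed bottom out at $h = 5$ as you say, but the unproved step is the passage itself, not the base case. If you drop the inductive fallback and execute your stage one in full --- which is precisely the content of the paper's Claims 1 and 2 and the two subsequent forced coincidences --- your plan goes through.
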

	
	\begin{proof} First assume that $A = a_1 \ast \{1, 3, \ldots, 2h - 1\} = a_1 \ast B$, where $B = \{1, 3, \ldots, 2h - 1\}$. It has been shown in the proof of the previous lemma that $|h_{\pm}^\wedge B| = h^2 - 1$, and so
\[|h_{\pm}^\wedge A| = |a_1 \ast h_{\pm}^\wedge B| = |h_{\pm}^\wedge B| = h^2 -1.\]

Conversely, assume that $|h_{\pm}^\wedge A| = h^2 - 1$. Let $X, Y, S, B_0, \ldots, B_h,$ and $C_1, \ldots, C_{h - 2}$ be the sets as defined in the proof of Lemma \ref{rss-lem2}. Since $|h_{\pm}^\wedge A| = |\Sigma (A)|$, it follows that 
\[|\Sigma (A)| = h^2 - 1.\]
Thus $\Sigma (A)$ contains precisely the elements of the set $S$. Recall also from the proof of Lemma \ref{rss-lem2} that
		\[\delta_j = \max (B_j) - {\max}_{-} (B_{j}) = a_{h - j + 1} - a_{h - j}\]
		for each $j \in [1, h - 3]$.
  
\noindent {\textbf{Claim 1.}} $\delta_j < a_1 + a_2$.

		First we show that $\delta_j \leq a_1 + a_2$ for all $j \in [1, h - 3]$. If $\delta_j > a_ 1 + a_2$ for some $j = j_0 \in [1, h - 3]$, then
		\[a_{h - j_0} < a_{h - j_0} + a_1 + a_2 < a_{h - j_0 + 1},\]
		where $a_{h - j_0}, a_{h - j_0 + 1} \in B_1$. It is easy to verify the following.
		\begin{enumerate}
			\item Since $a_{h - j_0} + a_1 + a_2 < \min (Y)$, it follows that $a_{h-j_0} + a_1 + a_2 \not\in Y$.
			\item Since 
\[a_{h - j_0} + a_1 + a_2 \not\in B_0 \cup B_1 \cup C_1\]
and 
\[a_{h - j_0} + a_1 + a_2 < \min (B_2 \cup \cdots \cup B_h \cup C_2 \cup \cdots \cup C_{h - 2}),\]
it follows that $$a_{h - j_0} + a_1 + a_2 \not \in X.$$ Thus $$a_{h - j_0} + a_1 + a_2 \not \in S.$$
		\end{enumerate}
		Since $a_{h - j_0} + a_1 + a_2 \in \Sigma (A) \setminus S$, it follows that $|\Sigma (A)| \geq |S| + 1 = h^2$, which is a contradiction. Hence 
		\[\delta_j \leq a_1 + a_2\]
		for each $j \in [1, h - 3]$. 
		
		Now we show that $\delta_j \neq a_1 + a_2$ for each $j \in [1, h - 3]$. Suppose that $\delta_j = a_ 1 + a_2$ for some $j = j_0 \in [1, h - 3]$. We consider the following cases.

		\noindent {\textbf{Case 1}} ($j_0 \in [1, h - 4]$). In this case, $\delta_{j_0} = a_ 1 + a_2$ implies that
		\[a_{h - j_0 + 1} = a_{h - j_0} + a_1 + a_2.\]
		Consider the following inequalities:
		\[a_{h - j_0 - 1} < a_{h - j_0 - 1} + a_1 + a_2 < a_{h - j_0} + a_1 + a_2 = a_{h - j_0 + 1},\] 
		\[a_{h - j_0 - 1} < a_{h - j_0} < a_{h - j_0 + 1},\]
		Since $\Sigma(A)$ can not have two elements between the elements $a_{h - j_0 - 1} \in S$ and $a_{h - j_0 + 1} \in S$ (otherwise, $\Sigma(A)$ will have more than $h^2 - 1$ elements), it follows that
		\[a_{h - j_0} = a_{h - j_0 - 1} + a_1 + a_2,\]
		and so
		\[a_{h - j_0 - 1} + a_1 < a_{h - j_0 - 1} + a_2 = a_{h - j_0} - a_1 < a_{h - j_0} + a_1,\]
		where $a_{h - j_0 - 1} + a_1, a_{h - j_0} + a_1 \in C_1$. It is easy to verify the following.
		\begin{enumerate}
			\item Since $a_{h - j_0 - 1} + a_2 < \min (Y)$, it follows that $a_{h - j_0} + a_1 + a_2 \notin Y$.
			\item Since 
\[a_{h - j_0 - 1} + a_2 \notin B_0 \cup B_1 \cup C_1\]
 and 
 \[a_{h - j_0 - 1} + a_2 < \min (B_2 \cup \cdots \cup B_h \cup C_2 \cup \cdots \cup C_{h - 2}),\]
  it follows that $$a_{h - j_0} + a_1 + a_2 \notin X.$$ Thus $$a_{h - j_0} + a_1 + a_2 \notin S.$$
		\end{enumerate}
		Hence $a_{h - j_0 - 1} + a_2 \in \Sigma (A) \setminus S$, and so
\[|\Sigma (A)| \geq |S| + 1 = h^2,\]
which is a contradiction. Therefore, $\delta_j \neq a_1 + a_2$ for each $j \in [1, h - 4]$. Hence $\delta_j < a_1 + a_2$ which proves Claim $1$ for each $j \in [1, h - 4]$. 
		
		\noindent {\textbf{Case 2}} ($j_0 = h - 3$). In this case, $\delta_{j_0} = a_1 + a_2$ implies that
		\[a_4 = a_3 + a_2 + a_1.\]
		Since $a_4 = a_3 + a_2 + a_1$, it follows that 
		\[a_3 < a_3 + a_2 < a_4\]
		and
		\[a_3 + a_1 < a_3 + a_2 < a_4 + a_1.\]
		It is easy to see that $a_3 + a_2 \in \Sigma (A) \setminus S$. Hence
		\[|\Sigma (A)| \geq |S| + 1 = h^2,\] 
		which is a contradiction. Therefore, $\delta_{j_0} \neq a_1 + a_2$,
		and hence $\delta_{j_0} < a_1 + a_2$. 

Thus we have shown that $\delta_j < a_1 + a_2$ for each $j \in [1, h-3]$ which proves Claim $1$.
 	
		Since $\delta_j < a_1 + a_2$, it follows that 
		\[\alpha_j = {\max}_{-} (B_{j}) + a_1 + a_2~ \text{for each}~ j \in [1, h - 3],\]
		and so
		\[Y = \{{\max}_{-} (B_{j}) + a_1 + a_2 : j \in [1, h - 3]\}.\]
		
		\noindent {\textbf{Claim 2}} ($\delta_j = a_2 - a_1$ for each $j \in [1, h - 3]$).
		
		First we show that $\delta_j \geq a_2 - a_1$ for each $j \in [1, h -3]$. If $\delta_j < a_ 2 - a_1$ for some $j = j_0 \in [1, h - 3]$, then
		\[\max (B_{j_0}) < {\max}_{-} (B_{j_0}) + a_2 - a_1.\]
		It is easy to verify the following.
		\begin{enumerate}
			\item Since $${\max}_{-} (B_{j_0}) \not \equiv {\max}_{-} (B_{j_0 - 1}) \pmod 2$$ and $$\max (B_{j_0}) < {\max}_{-} (B_{j_0}) + a_2 - a_1 < \alpha_{j_0} < \min (C_{j_0 + 1}),$$ it follows that $${\max}_{-} (B_{j_0}) + a_2 - a_1 \not\in Y.$$
			\item Since 
\[{\max}_{-} (B_{j_0}) + a_2 - a_1 \not\in B_0 \cup \cdots \cup B_{j_0 + 1} 
			\cup C_1 \cup \cdots \cup C_{j_0}\]
 and 
 \[{\max}_{-} (B_{j_0}) + a_2 - a_1 < \min (B_{j_0 + 2} \cup \cdots \cup B_h \cup C_{j_0 + 1} \cup \cdots \cup C_{h - 2}),\]
  it follows that $${\max}_{-} (B_{j_0}) + a_2 - a_1 \not\in X.$$ Hence $${\max}_{-} (B_{j_0}) + a_2 - a_1 \not\in S.$$
		\end{enumerate}
		Since ${\max}_{-} (B_{j_0}) + a_2 - a_1 \in \Sigma (A) \setminus S$, it follows that $|\Sigma (A)| \geq |S| + 1 = h^2$, which is a contradiction. Hence 
		\[\delta_j \geq a_2 - a_1\]
		for each $j \in [1, h - 3]$. 
		
		Next we show that $\delta_j = a_2 - a_1$ for each $j \in [1, h -3]$. If $\delta_j < a_ 2 - a_1$ for some $j = j_0 \in [1, h - 3]$, then
		\[\max (B_{j_0}) > {\max}_{-} (B_{j_0}) + a_2 - a_1.\]
		It is easy to verify the following.
		\begin{enumerate}
			\item Since 
              \[{\max}_{-} (B_{j_0)}) \not \equiv {\max}_{-} (B_{j_0 - 1}) \pmod 2\]
               and 
              \[{\max}_{-}(B_{j_0}) < {\max}_{-} (B_{j_0}) + a_2 - a_1 < \max (B_{j_0}) < \alpha_{j_0} < \min (C_{j_0 + 1}),\]
              it follows that 
              \[{\max}_{-} (B_{j_0}) + a_2 - a_1 \not\in Y.\]
			\item  Since 
               \[{\max}_{-} (B_{j_0}) + a_2 - a_1 \not\in B_0 \cup \cdots \cup B_{j_0 + 1} \cup C_1 \cup \cdots \cup C_{j_0}\]
                and 
                \[{\max}_{-} (B_{j_0}) + a_2 - a_1 < \min (B_{j_0 + 2} \cup \cdots \cup B_h \cup C_{j_0 + 1} \cup \cdots \cup C_{h - 2}),\]
                it follows that $${\max}_{-} (B_{j_0}) + a_2 - a_1 \notin X.$$ Thus 
                \[{\max}_{-} (B_{j_0}) + a_2 - a_1 \not\in S.\]
		\end{enumerate}
		Since ${\max}_{-} (B_{j_0}) + a_2 - a_1 \in \Sigma (A) \setminus S$, it follows that $|\Sigma (A)| \geq |S| + 1 = h^2$, which is a contradiction. 
		Therefore, $\delta_j = a_2 - a_1$ for each $j \in [1, h - 3]$ which proves Claim $2$. 
 
        Thus we have
		\begin{equation}\label{eq:5}
			a_2 - a_1 = a_4 - a_3 = \cdots = a_h - a_{h - 1}.
		\end{equation}

		Next we show that $a_3 - a_2 = a_2 -  a_1$. Consider the following elements of $S$ between $a_{h - 1} + a_1 + a_2$ and $ a_h + a_1 + a_{h - 2}$ in increasing order.
		\[a_{h - 1} + a_1 + a_2 < a_{h - 1} + a_1 + a_3 < \cdots < a_{h - 1} + a_1 + a_{h - 2} < a_h + a_1 + a_{h - 2}.\]
		We have the following inequality also.
		\[a_{h - 1} + a_1 + a_2 < a_h + a_1 + a_2 < \cdots < a_h + a_1 + a_{h - 3} < a_h + a_1 + a_{h - 2}.\]
		Since $|h_{\pm}^\wedge A| = h^2 - 1$, it follows that
		\[a_{h - 1} + a_1 + a_{j + 1} = a_h + a_1 + a_j\]
		for each $j \in [2, h - 3]$. Thus 
		\begin{equation}\label{eq:6}
			a_h - a_{h - 1} = a_3 - a_2 = \cdots = a_{h - 2} - a_{h - 3}.
		\end{equation}
		Therefore, it follows from \eqref{eq:5} and \eqref{eq:6} that
		\begin{equation}\label{eq:7}
			a_2 - a_1 = a_3 - a_2 = \cdots = a_{h - 1} - a_{h - 2} = a_h - a_{h - 1}.
		\end{equation}

		Now we show that $a_h = a_{h - 2} + a_1 + a_2$. Clearly,
		\[a_{h - 1} = a_{h - 2} + a_2 - a_1 < a_{h - 2} + a_1 + a_2 < \alpha_1 <a_h + a_1 + a_2\]
		and
		\[a_{h - 1} < a_h < \alpha_1 <a_h + a_1 + a_2.\]
		Since $|h_{\pm}^\wedge A| = h^2 - 1$, it follows that 
		\begin{equation}\label{eq:8}
			a_h = a_{h - 2} + a_1 + a_2.
		\end{equation}
		Using \eqref{eq:7} and \eqref{eq:8}, we have
		\begin{align*}
			a_1 + a_2 &= a_{h} - a_{h -2}\\
			&= (a_{h} - a_{h -1}) + (a_{h - 1} - a_{h -2})\\
			&= (a_2 - a_1) + (a_2 - a_1)\\
			&= 2a_2 - 2a_1.
		\end{align*}
		Therefore, 
		\begin{equation}\label{eq:9}
			a_2 - a_1 = 2a_1.
		\end{equation} 
		Hence it follows from \eqref{eq:7} and \eqref{eq:9} that
		\[a_2 = 3a_1, a_3 = 5a_1, \ldots, a_{h - 1} = (2h - 3)a_1, a_h = (2h - 1)a_1.\]
		Therefore,
		\[A = a_1 \ast \{1, 3, \ldots, 2h - 1\}.\]	
		This completes the proof.
	\end{proof}
	
	\begin{lemma}\label{rss-lem19}
		Let $A = \{a_1, a_2, a_3, a_4\}$ be a set of odd positive integers such that $a_1 < a_2 < a_3 < a_4$. Then 
		\[|4_{\pm}^\wedge A| = 15,\]
		if and only if either
		\[A = \{a_1, a_2, a_3, a_3 + a_2 + a_1\}\]
		or
		\[A = \{a_1, a_2, a_3, a_3 + a_2 - a_1\}.\]
	\end{lemma}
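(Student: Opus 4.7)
The plan is to first reduce the lemma to a statement about the set of subsums $\Sigma(A)$. Since $h = k = 4$, every coefficient in the definition of $4_{\pm}^\wedge A$ must satisfy $|\lambda_i| = 1$, so
\[4_{\pm}^\wedge A = -(a_1 + a_2 + a_3 + a_4) + 2 \ast \Sigma(A),\]
and hence $|4_{\pm}^\wedge A| = |\Sigma(A)|$. Since $A$ has only $2^4 = 16$ subsets, the condition $|\Sigma(A)| = 15$ means that exactly one unordered pair of distinct subsets of $A$ produces the same subsum.

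\emph{Sufficiency.} For $A = \{a_1, a_2, a_3, a_1 + a_2 + a_3\}$, the subsets $\{a_4\}$ and $\{a_1, a_2, a_3\}$ have equal sum, giving at least one collision. Since every element of $A$ is odd, the parity of a subsum equals the parity of the cardinality of its indexing subset, so no collision can occur between subsets of different cardinality-parity; this eliminates one-vs-two and two-vs-three element coincidences. The remaining potential collisions are one-vs-three or two-vs-two equalities, and substituting $a_4 = a_1 + a_2 + a_3$ into each such equation immediately forces some $2a_i = 0$, which is impossible. Hence $|\Sigma(A)| = 15$. The case $A = \{a_1, a_2, a_3, a_2 + a_3 - a_1\}$ is handled analogously, with the designed collision being $\{a_1, a_4\}$ and $\{a_2, a_3\}$.

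\emph{Necessity.} Conversely, assume $|\Sigma(A)| = 15$ and let $B \neq C$ be the unique pair of distinct subsets with $\sum B = \sum C$. Setting $B' = B \setminus C$ and $C' = C \setminus B$, we obtain disjoint nonempty subsets with $\sum B' = \sum C'$. For every $S \subseteq A \setminus (B' \cup C')$, the subsets $B' \cup S$ and $C' \cup S$ also collide, so uniqueness forces $B' \cup C' = A$. Thus the single collision arises from a partition of $A$ into two nonempty parts of equal sum. Among the seven such unordered partitions of a $4$-element set, the strict ordering $a_1 < a_2 < a_3 < a_4$ rules out the singleton partitions $\{a_i\} \sqcup A \setminus \{a_i\}$ for $i \in \{1, 2, 3\}$ (each $a_i$ is strictly less than the sum of the other three), as well as $\{a_1, a_2\} \sqcup \{a_3, a_4\}$ and $\{a_1, a_3\} \sqcup \{a_2, a_4\}$ (by the obvious strict inequalities). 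The two surviving partitions $\{a_4\} \sqcup \{a_1, a_2, a_3\}$ and $\{a_1, a_4\} \sqcup \{a_2, a_3\}$ give exactly the two forms in the statement.

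The most delicate point is the sufficiency direction, where one must be sure that no secondary collisions inflate the count beyond $15$; the oddness of every $a_i$ kills all cross-cardinality collisions for free, and the remaining same-parity cases are dispatched by direct substitution.
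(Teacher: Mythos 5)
Your proof is correct, but it takes a genuinely different and more elementary route than the paper. The paper proves sufficiency by explicitly listing all $15$ elements of $\Sigma(A)$ in each of the two cases, and proves necessity by recycling the machinery of Lemma \ref{rss-lem2}: it shows $\Sigma(A)$ must coincide with the explicit set $S = B_0 \cup B_1 \cup C_1 \cup B_2 \cup C_2 \cup B_3 \cup B_4 \cup Y$, sets $\delta_1 = a_4 - a_3$, and rules out $\delta_1 > a_1 + a_2$, $\delta_1 < a_2 - a_1$, and $a_2 - a_1 < \delta_1 < a_1 + a_2$ by exhibiting in each case an element of $\Sigma(A) \setminus S$ (e.g.\ $a_3 + a_2 + a_1$ or $a_3 + a_2$) squeezed between known consecutive elements of $S$, leaving only $\delta_1 \in \{a_2 - a_1, a_1 + a_2\}$. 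You instead exploit the fact that for $k = 4$ the set $\Sigma(A)$ is the image of only $2^4 = 16$ subsets, so $|\Sigma(A)| = 15$ means exactly one collision; your symmetric-difference argument ($B' = B \setminus C$, $C' = C \setminus B$, then padding by arbitrary $S \subseteq A \setminus (B' \cup C')$ would manufacture $2^{|A \setminus (B' \cup C')|}$ distinct collisions) neatly forces the colliding pair to partition $A$, and the ordering eliminates five of the seven partitions, leaving precisely $a_4 = a_1 + a_2 + a_3$ and $a_1 + a_4 = a_2 + a_3$. Your approach buys self-containedness (no dependence on the $B_j, C_j, Y$ construction) and a cleaner logical structure; the paper's approach buys uniformity with its general-$h$ inverse lemma (Lemma \ref{rss-lem6}), where collision counting would not scale, since for $h \geq 5$ the gap between $2^h$ and $h^2 - 1$ is far too large for a unique-collision argument. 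Two cosmetic points: in sufficiency you should also note that the same-parity collisions you do not list (empty set versus a $2$-subset, a $2$-subset versus all of $A$, and overlapping equal-cardinality pairs) are trivially killed by positivity and distinctness, and in the substitution step some equations yield $2(a_i + a_j) = 0$ rather than literally $2a_i = 0$ --- equally impossible, so the argument stands.
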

	
	\begin{proof}
If $A = \{a_1, a_2, a_3, a_4\}$, where $a_4 = a_3 + a_2 + a_1$, then
\begin{align*}
  \Sigma(A) = \{0, a_1, a_2, a_3, a_4, & \ a_1 + a_2, a_1 + a_3, a_2 + a_3, a_1 + a_4, a_2 + a_4, a_3 + a_4, \\
  & a_1 + a_2 + a_4, a_1 + a_3 + a_4, a_2 + a_3 + a_4, a_1 + a_2 + a_3 + a_4\},
\end{align*}
and so
\[|4_{\pm}^\wedge A| = |\Sigma(A)| = 15.\]
Similarly, if $A = \{a_1, a_2, a_3, a_4\}$, where $a_4 = a_3 + a_2 - a_1$, then
\begin{align*}
  \Sigma(A) = \{0, a_1, a_2, a_3, a_4, & \ a_1 + a_2, a_1 + a_3, a_1 + a_4, a_2 + a_4, a_3 + a_4, a_1 + a_2 + a_3, \\
  & a_1 + a_2 + a_4, a_1 + a_3 + a_4, a_2 + a_3 + a_4, a_1 + a_2 + a_3 + a_4\},
\end{align*}
and so
\[|4_{\pm}^\wedge A| = |\Sigma(A)| = 15.\]
		
Conversely, assume that $|4_{\pm}^\wedge A|= 15$. Then $$|\Sigma (A)| = |4_{\pm}^\wedge A|= 15,$$ and so it follows from Lemma \ref{rss-lem2} that $\Sigma (A)$ contains precisely the elements of the set 
\[S = B_0 \cup B_1 \cup C_1 \cup B_2 \cup C_2 \cup B_3 \cup B_4 \cup Y\]
which was constructed in the proof of Lemma \ref{rss-lem2}. The sets in the union are precisely the following sets. 
\begin{align*}
  B_0 & = \{0\}, \\
  B_1 & = \{a_1, a_2, a_3, a_4\}, \\
  C_1 & = \{a_1 + a_2, a_1 + a_3\}, \\
  B_2 & = \{a_1 + a_4, a_2 + a_4, a_3 + a_4\}, \\
  C_2 & = \{a_1 + a_2 + a_4\}, \\
  B_3 & = \{a_1 + a_3 + a_4, a_2 + a_3 + a_4\},\\
  B_4 & = \{a_1 + a_2 + a_3 + a_4\},\\
  Y & = \{\alpha_1\},
\end{align*}  
where $\alpha_1$ is defined as follows. 
\begin{equation*}
  \alpha_1 =
  \begin{cases}
    a_3 + a_2, & \mbox{if } \delta_1 \geq a_1 + a_2; \\
    a_3 + a_2 + a_1, & \mbox{if } \delta_1 < a_1 + a_2,
  \end{cases}
\end{equation*}	
where $\delta_1 = a_4 - a_3$.
		
\noindent {\textbf{Claim.}} Either $\delta_1 = a_1 + a_2$ or $\delta_1 = a_2 - a_1$. 
		
If $\delta_1 > a_1 + a_2$, then $\alpha_1 =  a_3 + a_2$, and
		\[a_3 < a_3 + a_2 + a_1 < a_4,\]
and so
\[a_3 + a_2 + a_1 \in \Sigma (A) \setminus S.\]
Hence it follows that $|\Sigma (A)| \geq |S| + 1 = 16$, which is a contradiction. 

Now assume that $\delta_1 < a_2 - a_1$. In this case, $\alpha_1 =  a_3 + a_2 + a_1$, and we have
		\[a_4 + a_1 < a_3 + a_2 < a_4 + a_2,\]
		and
		\[a_4 < a_3 + a_2 + a_1 < a_4 + a_2 + a_1.\]
		Since $a_3 + a_2 \in \Sigma (A) \setminus S$, it follows that $|\Sigma (A)| \geq |S| + 1 = 16$, which is a contradiction.

Finally, assume that $a_2 - a_1 < \delta_1 < a_1 + a_2$. In this case, $\alpha_1 =  a_3 + a_2 + a_1$, and we have
		\[a_3 + a_1 < a_3 + a_2 < a_4 + a_1,\]
		and
		\[a_4 < a_3 + a_2 + a_1 < a_4 + a_2 + a_1.\]
		Since $a_3 + a_2 \in \Sigma (A) \setminus S$, it follows that $|\Sigma (A)| \geq |S| + 1 = 16$, which is again a contradiction. 
Therefore, the only possibility is that either $\delta_1 = a_2 - a_1$ or $\delta_1 = a_1 + a_2$. Hence either
		\[A = \{a_1, a_2, a_3, a_3 + a_2 + a_1\}\]
or
\[A = \{a_1, a_2, a_3, a_3 + a_2 - a_1\}.\]
This completes the proof.
	\end{proof}
	
	\begin{lemma}\label{rss-lem4}
		Let $h \geq 3$ be an integer. Let $A = \{a_1, \ldots, a_{h + 1}\}$ be a set of positive integers such that $a_1 < \cdots < a_{h + 1}$. Furthermore, assume that 
		\[a_1 \not \equiv a_2 \pmod {2} ~\text{and}~ a_1 \not \equiv a_3 \pmod {2}.\]
		Then
		\begin{equation*}
			|h_{\pm}^\wedge A| \geq 
           \begin{cases}
              |h_{\pm}^\wedge A_1| + \frac{h(h + 1)}{2} + 2h - 1, & \mbox{if } a_3 = 2a_1 + a_2; \\
              |h_{\pm}^\wedge A_1| + \frac{h(h + 1)}{2} + 3h - 2, & \mbox{if } a_3 \neq 2a_1 + a_2,
           \end{cases}	
		\end{equation*}
		where $A_1 = A \setminus \{a_1\}$. Hence 
		\begin{equation*}
			|h_{\pm}^\wedge A| \geq 
			\begin{cases}
				h^2 + 3h, & \mbox{if }~  a_3 = 2a_1 + a_2;\\
				h^2 + 4h - 1, & \mbox{if }~ a_3 \neq 2a_1 + a_2.   
			\end{cases}
		\end{equation*}
	\end{lemma}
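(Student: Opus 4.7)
The plan is to follow the architecture of Lemma \ref{rss-lem1}, adapted to the new parity hypothesis. Since $a_1 \not\equiv a_2 \pmod{2}$, Fact $1$ gives $h_{\pm}^\wedge A_1 \cap h_{\pm}^\wedge A_2 = \emptyset$, where $A_2 = A \setminus \{a_2\}$. The objective is therefore to exhibit $\tfrac{h(h+1)}{2} + 2h - 1$ (Case $1$: $a_3 = 2a_1 + a_2$) or $\tfrac{h(h+1)}{2} + 3h - 2$ (Case $2$: $a_3 \neq 2a_1 + a_2$) distinct elements of $h_{\pm}^\wedge A \setminus h_{\pm}^\wedge A_1$. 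The final numerical bounds $h^2 + 3h$ and $h^2 + 4h - 1$ then follow by applying Theorem \ref{thm:3} to the $h$-element set $A_1$, which supplies $|h_{\pm}^\wedge A_1| \geq \tfrac{h(h+1)}{2} + 1$.

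First I would build a family $B_0', B_1', \ldots, B_h' \subseteq h_{\pm}^\wedge A_2$ exactly parallel to the $B_j$'s of Lemma \ref{rss-lem1}, but now based on the ordered set $A_2 = \{a_1, a_3, a_4, \ldots, a_{h+1}\}$. Taking $u' = -a_1 - a_3 - \cdots - a_{h+1}$ (the minimum of $h_{\pm}^\wedge A_2$) and defining the $B_j'$'s via the same ``shift-by-$2$'' pattern ensures $\max(B_i') < \min(B_{i+1}')$, hence pairwise disjointness, and produces a total of $\tfrac{h(h+1)}{2} + 1$ elements of $h_{\pm}^\wedge A_2$, all lying outside $h_{\pm}^\wedge A_1$ by Fact $1$.

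Second, for each $j \in [1, h-1]$ I would construct a set $C_j''$ of elements of the form $\pm a_1 \pm a_2 + z_j$ with $z_j \in (h-2)_{\pm}^\wedge \{a_3, \ldots, a_{h+1}\}$, choosing the $z_j$'s so that the $C_j''$ sit in the gaps between consecutive $B_i'$ blocks. Each such element uses $a_1$ with coefficient $\pm 1$, so it lies outside $h_{\pm}^\wedge A_1$. The dichotomy enters through the identity $a_1 + a_2 = a_3 - a_1$, valid precisely when $a_3 = 2a_1 + a_2$: in Case $1$, candidates of the form $a_1 + a_2 + z_j$ (with $z_j$ having zero coefficient on $a_3$) admit the alternative representation $-a_1 + a_3 + z_j \in h_{\pm}^\wedge A_2$, so they may coincide with elements already counted in the $B_i'$ staircase and thus fail to be new; in Case $2$ this particular coincidence cannot occur, so an additional candidate per $j$ survives. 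A careful selection of $z_j$'s would yield exactly $2(h-1)$ and $3(h-1)$ new elements in the two cases, matching the required counts.

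The main obstacle is the disjointness bookkeeping: choosing the $z_j$'s so each $C_j''$ fits into a single gap of the $B_i'$ staircase, ruling out overlaps between $C_j''$ and $C_{j'}''$ for $j \neq j'$, and verifying that the absorptions of candidates of $C_j''$ into the $B_i'$ family are exactly those forced by the identity $a_3 = 2a_1 + a_2$ in Case $1$ and no unforced collisions occur in Case $2$. Granting these verifications, summing the $B'$-contributions, the $C''$-contributions, and $|h_{\pm}^\wedge A_1|$, and invoking Theorem \ref{thm:3} on $A_1$, yields the stated bounds.
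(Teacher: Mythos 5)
Your overall architecture matches the paper's: split off $h_{\pm}^\wedge A_2$ (disjoint from $h_{\pm}^\wedge A_1$ by Fact $1$, since $a_1 \not\equiv a_2 \pmod 2$), build a staircase of elements there, and interleave extra elements involving both $a_1$ and $a_2$ into the gaps, with the dichotomy correctly traced to the identity $a_1 + a_2 = a_3 - a_1$, i.e.\ $a_3 = 2a_1 + a_2$ (in the paper this is precisely the collision $v = u + 2a_1$, where $u = -a_1 - a_3 - \cdots - a_{h+1}$ and $v = -a_1 - a_2 - a_4 - \cdots - a_{h+1}$). However, there is a genuine gap at the key disjointness step. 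You assert that each candidate $\pm a_1 \pm a_2 + z_j$ ``uses $a_1$ with coefficient $\pm 1$, so it lies outside $h_{\pm}^\wedge A_1$.'' Membership in a restricted signed sumset concerns the existence of \emph{some} representation, not the particular one you wrote down, so this is a non sequitur. Concretely, with $z_j \in (h-2)_{\pm}^\wedge \{a_3, \ldots, a_{h+1}\}$ your element omits exactly one $a_r$ with $r \geq 3$, hence lies in $h_{\pm}^\wedge (A \setminus \{a_r\})$; Fact $1$ separates this from $h_{\pm}^\wedge A_1$ only when $a_1 \not\equiv a_r \pmod 2$, which the hypotheses guarantee only for $r = 3$. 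Since $|A_1| = h$, every element of $h_{\pm}^\wedge A_1$ equals $\sum_{i=2}^{h+1} \epsilon_i a_i$ with $\epsilon_i \in \{-1, 1\}$; if some $a_r \equiv a_1 \pmod 2$ with $r \geq 4$ (nothing in the lemma forbids this), your candidate has the same parity as all of $h_{\pm}^\wedge A_1$ and can genuinely land inside it. A telltale symptom: your sketch never invokes the hypothesis $a_1 \not\equiv a_3 \pmod 2$ at all.

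The repair is exactly the paper's construction: force $z_j$ to use \emph{all} of $a_4, \ldots, a_{h+1}$, i.e.\ to omit precisely $a_3$, so the new elements lie in $h_{\pm}^\wedge A_3$ with $A_3 = A \setminus \{a_3\}$, which is disjoint from $h_{\pm}^\wedge A_1$ by Fact $1$ together with $a_1 \not\equiv a_3 \pmod 2$ --- this is why that hypothesis appears in the statement. The paper inserts the blocks $C_j = 2(a_{h+2-j} + \cdots + a_{h+1}) + C_0$, with $C_0$ containing $\{v, v + 2a_1, v + 2a_2\}$ and $u + 2a_1$, into the gaps via the explicit interleaving $\max(B_i) < \min(C_i) < \max(C_i) < \min(B_{i+1})$; the collision $v = u + 2a_1$ happens if and only if $a_3 = 2a_1 + a_2$ and costs one element per block, which is your dichotomy. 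Your counts ($2(h-1)$ vs.\ $3(h-1)$ extra elements on top of a full staircase of $\frac{h(h+1)}{2} + 1$ elements) are arithmetically consistent with the paper's ($3(h-1)$ vs.\ $4(h-1)$ on a staircase thinner by $h-1$), so the targets line up; but as submitted the proof is both unsound at the step above unless the $z_j$ are restricted as indicated, and incomplete, since the interleaving and collision bookkeeping you defer (``granting these verifications'') constitutes the bulk of the actual argument.
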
 
	
	\begin{proof}
		Let $A_2 = A \setminus \{a_2\}$. Then
		\[h_{\pm}^\wedge A_1 \cup h_{\pm}^\wedge A_2 \subseteq h_{\pm}^\wedge A.\]
		Let $u = - a_1 - a_3 - \cdots - a_{h + 1}$, and let $v = - a_1 - a_2 - a_4 - \cdots - a_{h + 1}$. Define the subsets $B_0, \ldots, B_h, C_0, \ldots, C_{h - 2}$ of $h_{\pm}^\wedge A_2$ as follows.
		\begin{align*}
			B_0 & = \{u\},\\
			B_1 & = \{u + 2a_i : i = 3, \ldots, h + 1\},\\
			C_0 & = \{u + 2a_1\} \cup \{ v, v + 2a_1, v + 2a_2\},\\
			C_1 & = 2a_{h + 1} + C_0,\\
			B_{h - 1} & = \{- u - 2a_1\},\\	
			B_h & = \{- u\}.
		\end{align*}	
		Furthermore, for each $j \in [2, h - 2]$, define
		\[B_j = \{u + 2a_i + a_{h + 3 - j} + \cdots + a_{h + 1} : i = 3, \dots, h + 2 - j\},\] 
and
		\[C_j = 2(a_{h + 2 - j} + \cdots + a_{h + 1}) + C_0.\]
		Observe the following:
		\begin{enumerate} 
			\item If $a_3 = 2a_1 + a_2$, then $v = u + 2a_1$. Hence $|C_0| = 3$. Therefore,
\[|C_j| = 3~ \text{for}~j = 0, \ldots, h - 2.\]
			\item If $a_3 \neq 2a_1 + a_2$, then $v \neq u + 2a_1$. Hence $|C_0| = 4$. Therefore,
\[|C_j| = 4~ \text{for}~j = 0, \ldots, h - 2.\]
			\item It is easy to see that 	
			\[\max (B_i) < \min (C_i) < \max (C_i) < \min (B_{i + 1})\]
			for $i = 0, 1, \ldots, h - 2$, and
			\[\max (B_{h - 1}) < \min (B_h).\]
		\end{enumerate}
		From the above observations, it follows that the sets $B_i$ and $C_j$ are pairwise disjoint for $i = 0, 1, \ldots, h$ and $j = 0, 1, \ldots, h-2$. Since the sumsets $h_{\pm}^\wedge A_1$ and $h_{\pm}^\wedge A_2$ are disjoint subsets of $h_{\pm}^\wedge A$, it follows that $B_0 \cup \cdots \cup B_h \cup C_0 \cup \cdots \cup C_{h - 2}$ and $h_{\pm}^\wedge A_1$ are disjoint subsets of $h_{\pm}^\wedge A$. Hence
\[h_{\pm}^\wedge A \supseteq h_{\pm}^\wedge A_1 \cup \supseteq h_{\pm}^\wedge A_2 \supseteq h_{\pm}^\wedge A_1 \cup B_0 \cup \cdots \cup B_h \cup C_0 \cup \cdots \cup C_{h - 2},\]
		and so
		\begin{align*}
			|h_{\pm}^\wedge A| &\geq |h_{\pm}^\wedge A_1| + \sum_{j = 0}^{h} |B_j| + \sum_{j = 0}^{h - 2} |C_j|\\
			&= |h_{\pm}^\wedge A_1| + 2 + \sum_{j = 1}^{h - 1}|B_j| + \sum_{j = 0}^{h - 2} |C_j| \\
			&= |h_{\pm}^\wedge A_1| + 2 + \sum_{j = 1}^{h - 1}(h - j) + \sum_{j = 0}^{h - 2} |C_j|\\
			&= |h_{\pm}^\wedge A_1| + \frac{h(h - 1)}{2} + \sum_{j = 0}^{h - 2} |C_j| + 2.
		\end{align*}
		Now substituting the values of $|C_j|$, we get
		\begin{equation*}
			|h_{\pm}^\wedge A| \geq 
			\begin{cases}
				|h_{\pm}^\wedge A_1| + \frac{h(h + 1)}{2} + 2h - 1, &~\text{if}~ a_3 = 2a_1 + a_2;\\
				|h_{\pm}^\wedge A_1| + \frac{h(h + 1)}{2} + 3h - 2, &~\text{if}~ a_3 \neq 2a_1 + a_2.   
			\end{cases}
		\end{equation*}
		Therefore, an application of Theorem \ref{thm:3} gives
		\begin{equation*}
			|h_{\pm}^\wedge A| \geq 
			\begin{cases}
				h^2 + 3h, &~\mbox{if } a_3 = 2a_1 + a_2;\\
				h^2 + 4h - 1, &~\mbox{if } a_3 \neq 2a_1 + a_2.   
			\end{cases}
		\end{equation*}
		This completes the proof.
	\end{proof}
	
	\begin{lemma}\label{rss-lem5}
		Let $h \geq 4$ be an integer. Let $A = \{a_1, \ldots, a_{h + 1}\}$ be a set of positive integers such that $a_1 < \cdots < a_{h + 1}$. Furthermore, assume that 
		\[a_2 \not \equiv a_1 \pmod {2} ~\text{and}~ a_3 \equiv a_1 \pmod {2}.\]
		Then
		\[|h_{\pm}^\wedge A| \geq |h_{\pm}^\wedge A_2| + \frac{h(h + 1)}{2} + h,\]
		where $A_2 = A \setminus \{a_2\}$. Hence
		\begin{equation}\label{rss-lem5eq1}
			|h_{\pm}^\wedge A| \geq 
			\begin{cases}
				h^2 + 2h + 2, &\mbox{if } h \geq 4 ~\text{and}~ A_2 ~\text{is not an A.P.};\\
				\frac{1}{2} h(3h - 1) + 4, &\mbox{if }h \geq 4, A_2 ~\text{is an A.P. and}~ a_2 \not \equiv 0 \pmod 2;\\
				26, &\mbox{if } h = 4 ~\text{and}~ A_2 ~\text{is an A.P. and}~ a_2 \equiv 0 \pmod 2;\\
				2h(h - 1), &\mbox{if } h \geq 5 ~\text{and}~ A_2 ~\text{is an A.P. and}~ a_2 \equiv 0 \pmod 2.  
			\end{cases}
		\end{equation}
	\end{lemma}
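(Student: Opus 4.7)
The main inequality $|h^\wedge_\pm A| \ge |h^\wedge_\pm A_2| + \frac{h(h+1)}{2} + h$ is the engine; the four concrete bounds then follow by plugging in known lower bounds for $|h^\wedge_\pm A_2|$.

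I would prove the main inequality by the disjointness/explicit-construction technique of Lemmas \ref{rss-lem1} and \ref{rss-lem4}. First invoke Fact (1) of Section \ref{section-rssp-aux-lemma}: since $a_1 \not\equiv a_2 \pmod 2$, we have $h^\wedge_\pm A_1 \cap h^\wedge_\pm A_2 = \emptyset$, and since $a_3 \equiv a_1 \pmod 2$ we also get $a_3 \not\equiv a_2 \pmod 2$, hence $h^\wedge_\pm A_3 \cap h^\wedge_\pm A_2 = \emptyset$. Thus any element of $h^\wedge_\pm A_1 \cup h^\wedge_\pm A_3$ counts towards $|h^\wedge_\pm A \setminus h^\wedge_\pm A_2|$, and it suffices to exhibit $\frac{h(h+1)}{2}+h$ such elements. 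For the first $\frac{h(h+1)}{2}+1$ of them I would reuse the $B_0, \dots, B_h$ construction from the proof of Lemma \ref{rss-lem1} (with $u = -a_2 - a_3 - \cdots - a_{h+1}$): these are pairwise disjoint subsets of $h^\wedge_\pm A_1$ whose sizes telescope to $1 + \sum_{j=1}^{h}(h - j + 1) = \frac{h(h+1)}{2} + 1$, and the chain $\max B_i < \min B_{i+1}$ keeps them distinct. For the remaining $h - 1$ elements I would move to $h^\wedge_\pm A_3$: setting $v = -a_1 - a_2 - a_4 - \cdots - a_{h+1}$ and, for $j = 1, \dots, h - 1$, placing an element of the form $v + 2(a_{h - j + 2} + \cdots + a_{h+1})$ (possibly with an extra flip of $\pm a_1$) strictly inside the open interval $(\max B_{j-1}, \min B_j)$, exploiting $a_3 > a_1$ and $a_3 \equiv a_1 \pmod 2$ to control both size and parity. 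Summing all contributions produces the main inequality.

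For the concrete bounds in \eqref{rss-lem5eq1}, I would bound $|h^\wedge_\pm A_2|$ separately in each case. If $A_2$ is not an A.P., Theorem \ref{thm:4} in contrapositive form yields $|h^\wedge_\pm A_2| \ge \frac{h(h+1)}{2} + 2$, giving $h^2 + 2h + 2$. If $A_2$ is an A.P., its common difference $d = a_3 - a_1$ is even (since $a_1 \equiv a_3 \pmod 2$), so every element of $A_2$ has the parity of $a_1$, opposite to $a_2$. When $a_2$ is odd, $A_2$ is all-even, and since $h^\wedge_\pm A_2 = 2 \ast h^\wedge_\pm (A_2/2)$, a direct count on the positive-integer A.P.\ $A_2/2$ combined with the main inequality gives $\frac{1}{2} h(3h - 1) + 4$. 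When $a_2$ is even, $A_2$ is all-odd and Lemma \ref{rss-lem2} gives $|h^\wedge_\pm A_2| = |\Sigma(A_2)| \ge h^2 - 1$, which with the main inequality yields $2h(h - 1)$ for $h \ge 5$ and (after a small separate check) $26$ for $h = 4$. The hardest step is the construction of the $h - 1$ extras in $h^\wedge_\pm A_3$: one must verify that each chosen element truly lies strictly between $\max B_{j-1}$ and $\min B_j$, an arithmetic check involving $a_1, a_2, a_3$ and the tail sums of $A$. Borderline sub-cases in case (ii) (notably $A_2 = d \ast [1, h]$, where the main inequality is one short of the claimed bound) may require one further explicit element of $h^\wedge_\pm A$, exhibited in the spirit of the $Y$-correction from the proof of Lemma \ref{rss-lem2}.
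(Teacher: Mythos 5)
Your overall architecture matches the paper's (parity forces $h_{\pm}^\wedge A_2$ to be disjoint from $h_{\pm}^\wedge A_1 \cup h_{\pm}^\wedge A_3$, and one exhibits $\frac{h(h+1)}{2}+h$ interleaved elements there), but your construction for the main inequality breaks down. Keeping the full family $B_0, \ldots, B_h$ of Lemma \ref{rss-lem1}, the consecutive gaps have width exactly $2a_2$, since $\min(B_{j+1}) - \max(B_j) = 2a_2$. Your candidate elements of $h_{\pm}^\wedge A_3$ sit at offset $a_3 - a_1$ (or $a_3 + a_1$, with the $a_1$-flip) above a gap base, because $v = u + (a_3 - a_1)$; and since every element of $h_{\pm}^\wedge A_3$ has the same parity as every element of $h_{\pm}^\wedge A_1$ (their generic sums differ by $a_3 - a_1 \equiv 0 \pmod 2$), only the ordering can guarantee distinctness. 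The needed inequality $a_3 - a_1 < 2a_2$ simply fails when $a_3 \geq 2a_1 + 2a_2 - a_1$, e.g.\ for $A = \{1, 2, 5, 7, 9\}$ (where $a_3 = 2a_2 + a_1$) your element $v + 2(\text{tail})$ \emph{coincides} with the already-counted $a_2$-flip element $\max(B_j) + 2a_2 = \min(B_{j+1})$, and for larger $a_3$ it lands inside or beyond $B_{j+1}$ with no disjointness argument available. The paper's proof is built precisely to avoid this: it deletes the $a_2$-flips from the middle layers (its $B_1 = \{u + 2a_i : i \geq 3\}$, etc., totalling only $\frac{h(h-1)}{2}+2$ elements), which widens every gap to $2a_3$, so that \emph{both} points $v + 2(\text{tail})$ and $v + 2a_1 + 2(\text{tail})$ always fit, as $0 < a_3 - a_1 < a_3 + a_1 < 2a_3$; this trades $h-1$ $B$-elements for $2(h-1)$ $C$-elements and gives the same total.

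There is a second, purely arithmetic gap: even granting the main inequality, your derivations of the second and fourth bounds in \eqref{rss-lem5eq1} do not close. In the A.P., $a_2$-odd case the extremal subcase $A_2 = 2d \ast [1, h]$ is admissible (e.g.\ $A = \{2, 3, 4, 6, \ldots, 2h\}$) and there $|h_{\pm}^\wedge A_2| = \frac{h(h+1)}{2}+1$, so the main inequality yields only $h^2 + 2h + 1$; the deficit to $\frac{1}{2}h(3h-1)+4$ is $\frac{(h-2)(h-3)}{2}$, which your ``one further explicit element'' covers only at $h = 4$ — at $h = 5$ you are already $3$ short, and the deficit grows quadratically. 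Likewise in the $a_2$-even case, $(h^2 - 1) + \frac{h(h+1)}{2} + h = \frac{1}{2}(3h^2 + 3h - 2)$ drops below $2h(h-1)$ for all $h \geq 7$. The paper abandons the main inequality in these two cases and uses $|h_{\pm}^\wedge A| \geq |h_{\pm}^\wedge A_1| + |h_{\pm}^\wedge A_2|$ with stronger inputs on the first term: a bespoke parity-split family of subset sums giving $|h_{\pm}^\wedge A_1| = |\Sigma(A_1)| \geq h^2 - h + 2$ (plus one extra element $v \notin h_{\pm}^\wedge A_1 \cup h_{\pm}^\wedge A_2$ in the subcase $A_2 = a_1 \ast [1,h]$), respectively $|h_{\pm}^\wedge A_1| \geq (h-1)^2$ obtained from Lemma \ref{rss-lem2} applied to the all-odd set $A \setminus \{a_1, a_2\}$. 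Without some such replacement estimate, your case analysis cannot reach the stated bounds.
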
 
	
	\begin{proof}
		Let $A_1 = A \setminus \{a_1\}$ and $A_3 = A \setminus \{a_3\}$. Then
		\[h_{\pm}^\wedge A_1 \cup h_{\pm}^\wedge A_2 \cup h_{\pm}^\wedge A_3 \subseteq h_{\pm}^\wedge A.\]
		Now we define the subsets $B_0, \ldots, B_h$ of $h_{\pm}^\wedge A_1$ as follows. Let 
\[u = \min(h_{\pm}^\wedge A_1) = - a_2 - \cdots - a_{h + 1},\] 
and define
		\begin{align*}
			B_0 & = \{u\},\\
			B_1 & = \{u + 2a_i : i = 3, \ldots, h + 1\},\\
			B_{h - 1} & = \{- u - 2a_2\},\\	
			B_h & = \{- u\}.
		\end{align*}
		Furthermore, for each $j \in [2, h - 2]$, define
		\[B_j =  \{u + 2(a_i + a_{h + 3 - j} + \cdots + a_{h + 1}) : i = 3, \ldots, h + 2 - j\}.\] 
		Now we define the subsets $C_0, \ldots, C_{h - 2}$ of $h_{\pm}^\wedge A_3$ as follows. Let 
\[v = \min(h_{\pm}^\wedge A_3)= - a_1 - a_2 - a_4 - \cdots - a_{h + 1},\]
 and define
		\[C_0 = \{v, v + 2a_1\}.\]
		Furthermore, for each $j \in [2, h - 2]$, define
		\[C_j = 2(a_{h + 2 - j} + \cdots + a_{h + 1}) + C_0.\]
		It is easy to see that 	
		\[\max (B_i) < \min (C_i) < \max (C_i) < \min (B_{i + 1})\]
		for $i = 0, \ldots, h - 2$, and
		\[\max (B_{h - 1}) < \min (B_h).\]
		Hence the sets $B_i$ and $C_j$ are disjoint sets for $i = 0, \ldots, h$ and $j = 0, \ldots, h - 2$. Since the sumsets $h_{\pm}^\wedge A_2$ is disjoint with each of the sumsets $h_{\pm}^\wedge A_1$ and $h_{\pm}^\wedge A_3$, it follows that $B_0 \cup \cdots \cup B_h \cup C_0 \cdots \cup C_{h - 2}$ and $h_{\pm}^\wedge A_2$ are disjoint sets. Hence
		\[h_{\pm}^\wedge A \supseteq h_{\pm}^\wedge A_2 \cup B_0 \cup \cdots \cup B_h \cup C_0 \cup \cdots \cup C_{h - 2},\]
		and so
		\begin{align*}
			|h_{\pm}^\wedge A| & \geq |h_{\pm}^\wedge A_2| + \sum_{j = 0}^{h} |B_j| + \sum_{j = 0}^{h - 2} |C_j|\\
			&= |h_{\pm}^\wedge A_2| + 2 + \sum_{j = 1}^{h - 1}|B_j| + \sum_{j = 0}^{h - 2} 2\\
			&= |h_{\pm}^\wedge A_2| + 2 + \sum_{j = 1}^{h - 1}(h - j) + 2(h - 1)\\
			&= |h_{\pm}^\wedge A_2| + \frac{h(h + 1)}{2} + h.
		\end{align*}
		Therefore, it follows from Theorem \ref{thm:3} and Theorem \ref{thm:4} that if $A_2$ is not an arithmetic progression, then  
		\begin{equation*}
			|h_{\pm}^\wedge A| \geq h^2 + 2h + 2.
		\end{equation*}
This establishes the first inequality in $\eqref{rss-lem5eq1}$.

		Now assume that the set $A_2$ is an arithmetic progression. Since $a_3 \equiv a_1 \pmod 2$ and $A_2$ is an arithmetic progression, it follows that
		\[a_1 \equiv a_3 \equiv \cdots \equiv a_{h + 1} \pmod 2.\]
		We consider the following cases.

		\noindent {\textbf{Case 1}} ($a_2 \not \equiv 0 \pmod 2$). In this case, 
		\[a_1 \equiv a_3 \equiv \cdots \equiv a_{h + 1} \equiv 0 \pmod 2.\]
		We define the subsets $B_0, \ldots, B_{h - 1}, C_1, \ldots, C_{h - 1}$ of $\Sigma (A_1)$ as follows. Let
		\begin{align*}
			B_0 & = \{0, a_2\},\\
			B_1 & = \{a_i : i = 3, \ldots, h + 1\},\\
			C_1 & = \{a_2 + a_i : i = 3, \ldots, h + 1\}.
		\end{align*}
		Furthermore, for $j = 2, \ldots, h - 1$, we define
		\[B_j = \{a_i : i = 3 \dots, h + 2 - j\} + a_{h + 3 - j} + \cdots + a_{h + 1}.\] 
		Also, for  $j = 2, \ldots, h - 1$, we define
		\[C_j = \{a_2 + a_i : i = 3 \dots, h + 2 - j\} + a_{h + 3 - j} + \cdots + a_{h + 1}.\]
		Observe the following.
		\begin{enumerate} 
			\item Since 
			\[\max (B_i) < \min (B_{i + 1})\]
			for $i = 0, \ldots, h - 1$, it follows that the sets  $B_0, \ldots, B_{h - 1}$ all are pairwise disjoint. 
			\item  Since 
			\[\max (C_i) < \min (C_{i + 1})\]
			for $i = 1, \ldots, h - 1$, it follows that sets  $C_1, \ldots, C_{h - 1}$ all are pairwise disjoint.
           \item  Since 
			\[\max(B_0)) < \min(C_i)\]
			for each $i \in [1, h-1]$, it follows that set $B_0$ is disjoint with each of the sets $C_1, \ldots, C_{h - 1}$.  
			\item For each $i \in [1, h - 1]$, all the elements of $B_i$ are even. For each $i \in [1, h - 1]$, all the elements of $C_i$ are odd. Hence for each $i \in [1, h-1]$ and each $j \in [1, h-1]$, the sets $B_i$ and $C_j$ are disjoint sets.
			
		\end{enumerate}
		From the above observations, it follows that the sets $B_i$ and $C_j$ are pairwise disjoint for $i = 0, \ldots, h - 1$ and $j = 1, \ldots, h - 1$.
		Since 
		\[\Sigma (A_1) \supseteq B_0 \cup \cdots \cup B_{h - 1} \cup C_1 \cup \cdots \cup C_{h - 1},\]
		it follows that
		\begin{align*}
			|\Sigma (A_1)| &\geq \sum_{j = 0}^{h - 1} |B_j| + \sum_{j = 1}^{h - 1} |C_j|\\
			&= 2 + \sum_{j = 1}^{h - 1}|B_j| + \sum_{j = 1}^{h - 1}|C_j|\\
			&= \sum_{j = 1}^{h - 1}(h - j) + \sum_{j = 1}^{h - 1}(h - j) + 2\\
			&= \frac{h(h - 1)}{2} + \frac{h(h - 1)}{2} + 2\\
			&= h^2 - h + 2. 
		\end{align*}
		Since the sumsets $h_{\pm}^\wedge A_1$ and $h_{\pm}^\wedge A_2$ are disjoint, it follows that
		\begin{align*}
			|h_{\pm}^\wedge A| &\geq |h_{\pm}^\wedge A_1| + |h_{\pm}^\wedge A_2|\\
			&= |\Sigma (A_1)| + |h_{\pm}^\wedge A_2|\\
			&\geq |h_{\pm}^\wedge A_2| + h^2 - h + 2.
		\end{align*}
		
		Now we consider the following subcases of Case $1$.
		
		\noindent {\textbf{Subcase 1.1}} ($A_2 \neq a_1 \ast [1, h]$). By applying Theorem \ref{thm:3} and Theorem \ref{thm:4}, we get 
		\begin{align*}
			|h_{\pm}^\wedge A| &\geq |h_{\pm}^\wedge A_2| + h^2 - h + 2\\
			&\geq \frac{h(h + 1)}{2} + 2 + (h^2 - h + 2)\\
			&= \frac{1}{2} h(3h - 1) + 4.
		\end{align*}
		
		\noindent {\textbf{Subcase 1.2}} ($A_2 = a_1 \ast [1, h]$). Let $v = - a_1 - a_2 - a_4 - \cdots - a_{h + 1}$. Then
		\[v \not \equiv x \pmod2~ \text{for all}~x \in h_{\pm}^\wedge A_2,\]
		and so
		\[v \notin  h_{\pm}^\wedge A_2.\]
		Clearly, the first three smallest elements of $h_{\pm}^\wedge A_1$ are $- a_2 - a_3 - \cdots - a_{h + 1}$, $a_2 - a_3 - \cdots - a_{h + 1}$, $- a_2 + a_3 - \cdots - a_{h + 1}$, respectively. It is easy to see that
		\[- a_2 - a_3 - \cdots - a_{h + 1} < v < - a_2 + a_3 - \cdots - a_{h + 1}.\]
		Since $a_3 = 2a_1$, it follows that
		\[v \neq a_2 - a_3 - \cdots - a_{h + 1},\]
		and so
		\[v \notin h_{\pm}^\wedge A_1.\]
		Thus
       \[v \in h_{\pm}^\wedge A \setminus (h_{\pm}^\wedge A_1 \cup h_{\pm}^\wedge A_2),\]
       and so
		\begin{align*}
			|h_{\pm}^\wedge A| &\geq |h_{\pm}^\wedge A_1 \cup h_{\pm}^\wedge A_2| + 1\\
			&\geq |h_{\pm}^\wedge A_1| + |h_{\pm}^\wedge A_2| + 1\\
			&\geq \biggl(\frac{h(h + 1)}{2} + 1 \biggl) + (h^2 - h + 2) + 1\\
			&= \frac{1}{2} h(3h - 1) + 4.
		\end{align*}	
		Thus we have established the second inequality in $\eqref{rss-lem5eq1}$.

		\noindent {\textbf{Case 2}} ($a_2 \equiv 0 \pmod 2$).
		In this case,
		\[a_1 \equiv a_3 \equiv \cdots \equiv a_{h + 1} \not \equiv 0\pmod 2.\]
		Clearly, 
		\begin{equation}\label{eq:1}
			|h_{\pm}^\wedge A| \geq |h_{\pm}^\wedge A_1| + |h_{\pm}^\wedge A_2|.
		\end{equation}

		First assume that $h = 4$. Then by applying Lemma \ref{rss-lem2} and Theorem \ref{thm:3}, we get
		\[|h_{\pm}^\wedge A| \geq (16 - 1) + (10 + 1) = 26.\]
		This establishes the third inequality in $\eqref{rss-lem5eq1}$.

		Now assume that $h \geq 5$. Let $B = A \setminus \{a_1, a_2\}$. Clearly, 
		\[\{a_2 + \cdots + a_{h + 1}\} \cup (- a_2 + (h - 1)_{\pm}^\wedge B) \subseteq h_{\pm}^\wedge A_1,\]
		and
		\[\max (- a_2 + (h - 1)_{\pm}^\wedge B) = - a_2 + a_3 + \cdots + a_{h + 1} < a_2 + \cdots + a_{h + 1}.\]
		Hence
		\[|h_{\pm}^\wedge A_1| \geq |- a_2 + (h - 1)_{\pm}^\wedge B| + 1.\]
		By applying Lemma \ref{rss-lem2}, we get
		\begin{equation}\label{eq:2}
			|h_{\pm}^\wedge A_1| \geq 1 + (h - 1)^2 - 1 = (h - 1)^2.
		\end{equation}
		Therefore, it follows from \eqref{eq:1}, \eqref{eq:2} and Lemma \ref{rss-lem2} that
		\begin{align*}
			|h_{\pm}^\wedge A| \geq (h - 1)^2 + (h^2 - 1) = 2h(h - 1), 
		\end{align*}
which establishes the last inequality in $\eqref{rss-lem5eq1}$. This completes the proof.
	\end{proof}
	
	\begin{lemma}\label{rss-lem3}
		Let $h \geq 3$ be an integer. Let $A = \{a_1, \ldots, a_{h + 1}\}$ be a set of odd positive integers such that $a_1 < \cdots < a_{h + 1}$. Then
		\[|h_{\pm}^\wedge A| \geq |h_{\pm}^\wedge A_{h + 1}| + 2h + 2,\]
		where $A_{h + 1} = A \setminus \{a_{h + 1}\} $. Hence 
		\[|h_{\pm}^\wedge A| \geq h^2 + 2h + 1.\]	
	\end{lemma}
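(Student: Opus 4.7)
My plan is to exhibit $h+1$ positive elements of $h_{\pm}^\wedge A$ that do not lie in $h_{\pm}^\wedge A_{h+1}$; by the negation-symmetry of both sets (flip every $\lambda_i$), each such element contributes a matching negative one, yielding at least $2h+2$ elements of $h_{\pm}^\wedge A \setminus h_{\pm}^\wedge A_{h+1}$ in total (the possible element $0$ only helps). The ``hence'' part then follows from Lemma~\ref{rss-lem2} applied to the $h$-element set $A_{h+1}$.

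For the first $h$ positive extras, set $\mu_j := \sum_{i=1}^{h+1} a_i - a_j$ for $j \in [1,h]$. This is the maximum of $h_{\pm}^\wedge A_j$, hence lies in $h_{\pm}^\wedge A$. Because $a_{h+1} > a_j$, one has $\mu_j > \mu_{h+1} := a_1 + \cdots + a_h = \max h_{\pm}^\wedge A_{h+1}$, so $\mu_j \notin h_{\pm}^\wedge A_{h+1}$, and the values $\mu_1 > \mu_2 > \cdots > \mu_h$ are pairwise distinct.

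For the $(h+1)$-th positive extra I would split on the comparison between $a_{h+1}$ and $a_1 + 2a_2$. In the regime $a_{h+1} > a_1 + 2a_2$, take $\nu := \mu_1 - 2a_2$, the second-largest element of $h_{\pm}^\wedge A_1$; then $\nu > \mu_{h+1}$ so $\nu \notin h_{\pm}^\wedge A_{h+1}$, and $\nu = \mu_j$ forces $a_j = a_1 + 2a_2$, a short side-case handled by another candidate. In the regime $a_{h+1} \leq a_1 + 2a_2$, take $\eta := \mu_h - 2a_2 = a_1 - a_2 + a_3 + \cdots + a_{h-1} + a_{h+1} \in h_{\pm}^\wedge A_h$. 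Expanding $\eta = \sum_{i=1}^h \epsilon_i a_i$ with $\epsilon_i \in \{-1,1\}$ and rearranging shows that this is possible only if $\epsilon_2 = \epsilon_h = 1$ and $(2a_2 + a_h - a_{h+1})/2 \in \Sigma(\{a_1, a_3, a_4, \ldots, a_{h-1}\})$. The key quantitative input
\[\frac{2a_2 + a_h - a_{h+1}}{2} \leq a_2 - 1 < a_3\]
(which uses only $a_{h+1} \geq a_h + 2$) then forces the subset-sum to be $0$ or $a_1$, i.e.\ $a_{h+1} \in \{2a_2 + a_h,\ 2a_2 + a_h - 2a_1\}$. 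Outside these two exceptional arithmetic relations, $\eta$ is the required new positive extra, clearly distinct from each $\mu_j$ since $\eta = \mu_j$ would force $a_j = a_h + 2a_2 > a_h$. In the two exceptional cases, I would substitute $\mu_{h-1} - 2a_2$ or $\mu_h - 2a_3$ respectively, and re-run the non-membership and distinctness checks, which amount to similar short computations.

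The main obstacle is the case analysis for the $(h+1)$-th extra: the candidate $\eta$ can accidentally land inside $h_{\pm}^\wedge A_{h+1}$, or a substitute can accidentally coincide with one of the $\mu_j$. The bound $(2a_2 + a_h - a_{h+1})/2 < a_3$ is what keeps this analysis finite, reducing the problematic configurations to a short explicit list of arithmetic identities among $a_1, a_2, a_h, a_{h+1}$; the initial $h$ extras $\mu_j$ are straightforward and parallel the max-of-$h_{\pm}^\wedge$ constructions already used in Lemmas~\ref{rss-lem1} and~\ref{rss-lem5}.
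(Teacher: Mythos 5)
Your skeleton is the same as the paper's: the $h$ top sums $\mu_1,\dots,\mu_h$ (the paper packages these via $C = h_{\pm}^\wedge A_{h+1} \cup h^\wedge A \cup h^\wedge(-A)$, noting $h^\wedge A = \{\mu_1,\dots,\mu_{h+1}\}$ with $\mu_{h+1} = \max h_{\pm}^\wedge A_{h+1}$), one further signed element just below the top, and negation-symmetry to double the count; your $\eta = \mu_h - 2a_2$ is exactly the paper's $\alpha$, and your membership analysis for it (writing $\eta = z - 2s$ with $s = a_2 - (a_{h+1}-a_h)/2 \le a_2 - 1$, forcing $s \in \{0, a_1\}$) is correct. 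The genuine gap is in your case analysis for the $(h+1)$-st extra. First, in the regime $a_{h+1} > a_1 + 2a_2$ the coincidence $\nu = \mu_j$, i.e.\ $a_1 + 2a_2 \in A$, is dismissed with an unnamed ``another candidate''; this is not a fringe case: it occurs for the extremal set $\{1,3,5,7,9\}$ with $h = 4$, where the only available extra pair is $\pm 12 = \pm(\mu_h - 2a_2)$, so any substitute must be $\eta$ itself, requiring the full analysis you run only in the other regime. Second, your exceptional case $a_{h+1} = a_h + 2a_2$ cannot occur at all in the regime $a_{h+1} \le a_1 + 2a_2$ (it would force $a_h \le a_1$), so your ``respectively'' pairing of exceptional cases with substitutes is misaligned. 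Third, and fatally, the substitute $\mu_h - 2a_3$ for the case $a_{h+1} = a_h + 2a_2 - 2a_1$ fails: for $A = \{3,5,7,11\}$, $h = 3$, one gets $\mu_3 - 2a_3 = 5 = a_1 - a_2 + a_3 \in 3_{\pm}^\wedge A_4$, and for $A = \{3,5,7,9,13\}$, $h = 4$, one gets $\mu_4 - 2a_3 = 14 = a_1 - a_2 + a_3 + a_4 \in 4_{\pm}^\wedge A_5$; moreover for $h = 3$ the expression $\mu_3 - 2a_3$ is not even visibly in $3_{\pm}^\wedge A$, since $a_3$ does not occur in the support of $\mu_3$.

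The repair --- which is the paper's actual argument --- is to drop the regime split and pair $\eta = \mu_h - 2a_2$ with $\beta = \mu_h - 2a_1$. Your own computation, run with $a_1$ in place of $a_2$, gives $\beta = z - 2s''$ with $s'' \le a_1 - 1 < a_1$, so $\beta \in h_{\pm}^\wedge A_{h+1}$ only if $a_{h+1} = a_h + 2a_1$, while $\eta \in h_{\pm}^\wedge A_{h+1}$ only if $a_{h+1} \in \{a_h + 2a_2,\, a_h + 2a_2 - 2a_1\}$. These obstructions are mutually exclusive: holding simultaneously would force $a_1 = a_2$ or $a_2 = 2a_1$, the latter impossible because all $a_i$ are odd --- this parity step is precisely where the oddness hypothesis enters, and it is the paper's Claim 1. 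Both candidates are positive and automatically distinct from $\mu_1, \ldots, \mu_h$ (equality would force $a_j = a_h + 2a_2$ or $a_j = a_h + 2a_1$, exceeding $a_h$), which replaces the paper's Claim 2 excluding membership in $h^\wedge A$. With that fix, the rest of your argument (the $h$ sums $\mu_j$, the symmetry doubling, and Lemma \ref{rss-lem2} for the ``hence'' part) goes through.
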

	
	\begin{proof}
		It is easy to see that 
		\[h_{\pm}^\wedge A_{h + 1} \cap h^\wedge A = \{a_1 + \cdots + a_h\}\]
        and
        \[h_{\pm}^\wedge A_{h + 1} \cap h^\wedge (- A) = \{- (a_1 + \cdots + a_h)\}.\]
	    Let $$C = h_{\pm}^\wedge A_{h + 1} \cup h^\wedge A \cup h^\wedge (- A).$$ Then $$ C \subseteq h_{\pm}^\wedge A,$$ and so
		\[|h_{\pm}^\wedge A| \geq |C| = |h_{\pm}^\wedge A_{h + 1}| + |h^\wedge A| + |h^\wedge (- A)| - 2.\]
		Hence by applying Theorem \ref{restricted-hfold-direct-thm}, we get 
		\begin{align*}
			|h_{\pm}^\wedge A| \geq |C| \geq |h_{\pm}^\wedge A_{h + 1}| + (h + 1) + (h + 1) - 2 = |h_{\pm}^\wedge A_{h + 1}| + 2h.
		\end{align*}
		Therefore, to prove the lemma, it suffices to construct $2$ more elements in $h_{\pm}^\wedge A$ distinct from the elements of $C$. Let 
		\begin{align*}
            z &= \max (h_{\pm}^\wedge A_{h + 1}),\\
			x &= \max (h_{\pm}^\wedge A_{h + 1}) - 2a_2 = z - 2a_2,\\
			y &= \max (h_{\pm}^\wedge A_{h + 1}) - 2a_1 = z - 2a_1,\\
			\alpha &= \max (h_{\pm}^\wedge A_{h + 1}) + a_{h + 1} - a_h - 2a_2 = z + a_{h + 1} - a_h - 2a_2,\\
			\beta &= \max (h_{\pm}^\wedge A_{h + 1}) + a_{h + 1} - a_h - 2a_1 = z + a_{h + 1} - a_h - 2a_1.
		\end{align*}
		Then $x < y < z, 0 < \alpha < \beta, x < \alpha, y < \beta$, and $\beta < {\min}_{+} (h^\wedge A)$.
		
		\noindent {\textbf{Claim 1.}} Either $\alpha \notin h_{\pm}^\wedge A_{h + 1}$ or $\beta \notin h_{\pm}^\wedge A_{h + 1}$.

		Suppose that 
		\[\alpha \in h_{\pm}^\wedge A_{h + 1}  ~\text{and}~ \beta \in h_{\pm}^\wedge A_{h + 1}.\]
		Since the first three smallest elements of $\Sigma (A_{h + 1})$ are $0, a_1, a_2$, respectively, and since
		\[h_{\pm}^\wedge A_{h + 1} = \max (h_{\pm}^\wedge A_{h + 1}) - 2 \ast \Sigma (A_{h + 1}),\] 
		it follows that the first three largest elements of $h_{\pm}^\wedge A_{h + 1}$ are $x, y, z$, respectively. Since $x < \alpha$ and $y < \beta$, it follows that 
		\[y = \alpha ~\text{and}~ z = \beta,\]
		and so
		\[a_{h + 1} - a_h = 2(a_2 - a_1) ~\text{and}~ a_{h + 1} - a_h = 2a_1.\]
		This implies that $$a_2 = 2a_1,$$ which is a contradiction. Therefore, either $\alpha \notin h_{\pm}^\wedge A_{h + 1}$ or $\beta \notin h_{\pm}^\wedge A_{h + 1}$ which proves Claim $1$.
		
		\noindent {\textbf{Claim 2.}} Either $\alpha \not\in C$ or $\beta \not\in C$.
	
		Since $\alpha $ and $ \beta$ are positive integers, it is enough to show that either $$\alpha \notin h_{\pm}^\wedge A_{h + 1} \cup h^\wedge A$$ or $$\beta \notin h_{\pm}^\wedge A_{h + 1} \cup h^\wedge A.$$ Suppose that $\alpha \in h_{\pm}^\wedge A_{h + 1} \cup h^\wedge A$  and $\beta \in h_{\pm}^\wedge A_{h + 1} \cup h^\wedge A$. Since	
		\[y = {\max}_{-} (h_{\pm}^\wedge A_{h + 1}) < \beta \leq \min (h^\wedge A) < {\min}_{+} (h^\wedge A),\]
		and
		\[h_{\pm}^\wedge A_{h + 1} \cap h^\wedge A = \max (h_{\pm}^\wedge A_{h + 1}) = \min (h^\wedge A),\]
		it follows that 
		\begin{equation}\label{eq:17}
			\beta \in h_{\pm}^\wedge A_{h + 1} ~\text{and}~ \beta = \max (h_{\pm}^\wedge A_{h + 1}) = \min (h^\wedge A).
		\end{equation}
		Similarly, since 
\[\alpha < \beta = \max (h_{\pm}^\wedge A_{h + 1}) = \min (h^\wedge A)\]
and 
\[\alpha \in h_{\pm}^\wedge A_{h + 1} \cup h^\wedge A,\]
 it follows that
		\begin{equation}\label{eq:18}
			\alpha \in h_{\pm}^\wedge A_{h + 1}.
		\end{equation}
		Thus \eqref{eq:17} and \eqref{eq:18} implies that $$\alpha \in h_{\pm}^\wedge A_{h + 1}~\text{and} ~\beta \in h_{\pm}^\wedge A_{h + 1},$$ which contradicts Claim $1$. Hence
		\[~\text{either}~ \alpha \notin h_{\pm}^\wedge A_{h + 1} \cup h^\wedge A ~\text{or}~ \beta \notin h_{\pm}^\wedge A_{h + 1} \cup h^\wedge A.\]
		Therefore,
		\[~\text{either}~ \alpha \notin C ~\text{or}~ \beta \notin C,\]
which proves Claim $2$.

		Since $C$ is a symmetric set, it also follows that
		\[~\text{either}~ - \alpha \notin C ~\text{or}~ - \beta \notin C.\]
Hence either $- \alpha, \alpha$ or $- \beta, \beta$ are pair of extra elements in $h_{\pm}^{\wedge} A$ distinct from the elements of $C$. Therefore,
		\begin{align*}
			|h_{\pm}^\wedge A| \geq |C| + 2 \geq |h_{\pm}^\wedge A_{h + 1}| + 2h + 2.
		\end{align*}
		Now an application of Lemma \ref{rss-lem2} gives 
		\[|h_{\pm}^\wedge A| \geq h^2 + 2h + 1.\]
		This completes the proof.
	\end{proof}
	
	\begin{lemma}\label{rss-lem7}
		Let $h \geq 5$ be an integer. Let $A = \{a_1, a_2, \ldots, a_{h + 1}\}$ be a set of odd positive integers such that $a_1 < a_2 < \cdots < a_{h + 1}$. If
		\[|h_{\pm}^\wedge A| = (h + 1)^2,\]
		then
		\[A = a_1 \ast \{1, 3, \ldots, 2h + 1\}.\]
	\end{lemma}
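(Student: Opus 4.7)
The plan is to combine Lemmas \ref{rss-lem3}, \ref{rss-lem2}, and \ref{rss-lem6} to first pin down $A_{h+1} := A \setminus \{a_{h+1}\}$ as the arithmetic progression $a_1 \ast \{1, 3, \ldots, 2h-1\}$, and then to analyze the tightness of the construction in Lemma \ref{rss-lem3} to force $a_{h+1} = (2h+1)a_1$.

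First I would use that Lemma \ref{rss-lem3} gives $|h_{\pm}^\wedge A| \geq |h_{\pm}^\wedge A_{h+1}| + 2h + 2$ while Lemma \ref{rss-lem2} gives $|h_{\pm}^\wedge A_{h+1}| \geq h^2 - 1$, so the hypothesis $|h_{\pm}^\wedge A| = (h+1)^2$ forces both to be equalities. In particular $|h_{\pm}^\wedge A_{h+1}| = h^2 - 1$, and since $h \geq 5$, Lemma \ref{rss-lem6} yields $A_{h+1} = a_1 \ast \{1, 3, \ldots, 2h - 1\}$; equivalently, $a_i = (2i-1)a_1$ for $i = 1, \ldots, h$. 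It remains to show $a_{h+1} = (2h+1)a_1$.

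Second, I would revisit the construction from the proof of Lemma \ref{rss-lem3}. Set $C = h_{\pm}^\wedge A_{h+1} \cup h^\wedge A \cup h^\wedge(-A)$; using $|h^\wedge A| = |h^\wedge(-A)| = h + 1$ and the intersections computed there, one has $|C| = h^2 + 2h - 1$. The elements $\alpha = a_1 - a_2 + a_3 + \cdots + a_{h-1} + a_{h+1}$ and $\beta = -a_1 + a_2 + a_3 + \cdots + a_{h-1} + a_{h+1}$ from that proof have the property that at least one of the symmetric pairs $\{\pm \alpha\}, \{\pm \beta\}$ lies outside $C$. Because tightness forces $|h_{\pm}^\wedge A \setminus C| = 2$ and $C$ is symmetric about $0$, exactly one of these pairs lies outside $C$. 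Substituting $a_i = (2i-1)a_1$ I would obtain $\alpha = (h^2 - 2h - 5)a_1 + a_{h+1}$ and $\beta = (h^2 - 2h - 1)a_1 + a_{h+1}$; using the explicit description $h_{\pm}^\wedge A_{h+1} = a_1 \ast \{h^2 - 2k : k \in [0, h^2] \setminus \{2, h^2 - 2\}\}$ (which emerges from the proof of Lemma \ref{rss-lem2}) and $h^\wedge A = \{s - a_i : i \in [1, h+1]\}$ with $s = h^2 a_1 + a_{h+1}$, a short case check gives $\beta \in C$ iff $a_{h+1} = (2h+1)a_1$ and $\alpha \in C$ iff $a_{h+1} \in \{(2h+3)a_1, (2h+5)a_1\}$. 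Hence $a_{h+1} \in \{(2h+1)a_1, (2h+3)a_1, (2h+5)a_1\}$.

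The main obstacle is ruling out the two larger values. For each I plan to exhibit the same element $w = (h^2 - 4)a_1 \in h_{\pm}^\wedge A$, namely $w = -a_1 - a_2 + a_3 + a_4 + \cdots + a_{h-1} + a_{h+1}$ when $a_{h+1} = (2h+3)a_1$ and $w = a_1 + a_2 - a_3 + a_4 + \cdots + a_{h-1} + a_{h+1}$ when $a_{h+1} = (2h+5)a_1$. The point is that $2 \notin \Sigma(\{1, 3, \ldots, 2h-1\})$ for $h \geq 5$, so $w \notin h_{\pm}^\wedge A_{h+1}$; a direct range comparison against the $h+1$ elements of $h^\wedge A$, together with $w > 0$ versus the negativity of $h^\wedge(-A)$, gives $w \notin h^\wedge A \cup h^\wedge(-A)$; and $w \neq \pm \beta$ since $|\beta| > |w|$ in both cases. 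Therefore $\{\pm w\}$ furnishes a second symmetric pair of extras beyond $\{\pm \beta\}$, yielding $|h_{\pm}^\wedge A| \geq |C| + 4 > (h+1)^2$, a contradiction. The only remaining possibility is $a_{h+1} = (2h+1)a_1$, so $A = a_1 \ast \{1, 3, \ldots, 2h+1\}$.
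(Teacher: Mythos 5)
Your proposal is correct and takes essentially the same route as the paper: the same tightness reduction via Lemmas \ref{rss-lem3}, \ref{rss-lem2}, and \ref{rss-lem6}, the same set $C$ and elements $\alpha, \beta$ with the exactly-one-symmetric-pair counting, and even the same witness element $(h^2-4)a_1$ (realized by the same two signed sums, the paper's $v$ and $w$) to dispatch the spurious cases. The only difference is presentational: you substitute $a_i = (2i-1)a_1$ early and enumerate $a_{h+1} \in \{(2h+1)a_1, (2h+3)a_1, (2h+5)a_1\}$, whereas the paper expresses the same trichotomy as $\beta = z$ versus $\alpha \in \{y, z\}$.
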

	
	\begin{proof}
		By applying Lemma \ref{rss-lem2} and Lemma \ref{rss-lem3}, we get
		\[|h_{\pm}^\wedge A| \geq |h_{\pm}^\wedge A_{h + 1}| + 2h + 2 \geq (h^2 - 1) + 2h + 2 = (h + 1)^2,\]
		where $A_{h + 1} = A \setminus \{a_{h + 1}\}$.
		This implies that
		\[|h_{\pm}^\wedge A_{h + 1}| = h^2 - 1,\]
		and so, it follows from Lemma \ref{rss-lem6} that
		\begin{equation}\label{eq:11}
			A_{h + 1} = a_1 \ast \{1, 3, \ldots, 2h - 1\}. 
		\end{equation}
		Let 
		\[C = h_{\pm}^\wedge A_{h + 1} \cup h^\wedge A \cup h^\wedge (- A),\]
		and let $x, y, z, \alpha$ and $\beta$ be as defined in the proof of Lemma \ref{rss-lem3}. As shown in the proof of Lemma \ref{rss-lem3},
		\[~\text{either}~ \alpha \notin C ~\text{or}~ \beta \notin C.\]
		We show that exactly one of $\alpha$ and $\beta$ does not belong to $C$. Suppose that $\alpha \not \in C$ and $\beta \notin C$.
		Since 
        \[\{- \alpha, - \beta, \alpha, \beta\} \subseteq h_{\pm}^\wedge A \setminus C\]
        and 
        \[|C| = |h_{\pm}^\wedge A_{h + 1}| + 2h = h^2 + 2h - 1,\]
        it follows that
		\[|h_{\pm}^\wedge A| \geq |C| + 4 = (h + 1)^2 + 2,\]
		which is a contradiction. Therefore, exactly one of $\alpha$ and $\beta$ does not belong to $C$.

		\noindent {\textbf{Claim.}} $\beta \in C$.
 
		Suppose that $\beta \notin C$. Since exactly one of $\alpha$ and $\beta$ does not belong to $C$, it follows that  $\alpha \in C$. Since the set $C$ is a symmetric set, it follows that $- \alpha \in C$ also. The first three largest elements of $h_{\pm}^\wedge A_{h + 1}$ are $x, y, z$, respectively. Since
		\[x < \alpha < {\min}_{+}(h^\wedge A),\]
		and
		\[\max (h_{\pm}^\wedge A_{h + 1}) = \min (h^\wedge A),\]
		it follows that 
		\[~\text{either}~ \alpha = y ~\text{or}~ \alpha = z.\]
		Clearly,
		\begin{align*}
			|C \cup \{- \beta, \beta\}|
			& = |h_{\pm}^\wedge A_{h + 1} \cup h^\wedge A \cup h^\wedge (- A)| + |\{- \beta, \beta\}|\\
			&= (h^2 - 1) + (h + 1) + (h + 1) - 2  + 2\\
			&= (h + 1)^2.
		\end{align*}
		
		\noindent {\textbf{Case 1}} ($\alpha = z$). This condition implies that
		\[a_{h + 1} - a_h = 2a_2 = 6a_1.\]
		Let 
		\[v = a_1 + a_2 - a_3 + a_4 + \ldots + a_{h - 1} + a_{h + 1}.\] 
		Then
		\[x < v < y.\]
		Hence
		\[v \not\in h_{\pm}^\wedge A_{h + 1}.\]
		Since 
		\[0 < v < \alpha = \min (h^\wedge A) < \beta,\]
		it follows that
		\[v \notin C \cup \{- \beta, \beta\}.\]
		Thus
        \[v \in h_{\pm}^\wedge A \setminus (C \cup \{- \beta, \beta\}),\]
        and so
        \[|h_{\pm}^\wedge A| \geq |C \cup \{- \beta, \beta\}| + 1 = (h + 1)^2 + 1, \]
		which is a contradiction. Hence
		\[\alpha \neq z.\]
		
		\noindent {\textbf{Case 2}} ($\alpha = y$). This condition implies that
		\[a_{h + 1} - a_h = 2a_2 = 6a_1.\]
		Let 
		\[w = - a_1 - a_2 + a_3 + a_4 + \ldots + a_{h - 1} + a_{h + 1}.\] 
		Then
		\[x < w < y.\]
		Hence
		\[w \not\in h_{\pm}^\wedge A_{h + 1}.\]
		Since 
		\[0 < w < z = \min (h^\wedge A),\]
		it follows that
		\[w \notin C \cup \{- \beta, \beta\}.\]
		Thus
        \[w \in h_{\pm}^\wedge A \setminus (C \cup \{- \beta, \beta\}),\]
        and so
        \[|h_{\pm}^\wedge A| \geq |C \cup \{- \beta, \beta\}| + 1 = (h + 1)^2 + 1, \]
		which is a contradiction. Hence
		\[\alpha \neq y.\]
		Therefore, 
		\[\alpha \neq y ~\text{and}~ \alpha \neq z,\]
		which is a contradiction. Therefore, $\beta \in C$ which proves our claim.
	
		Now, since $\beta \in C$ and $x < y < \beta \leq z = \max (h_{\pm}^\wedge A_{h + 1}) = \min (h^\wedge A )$, it follows that
		\[\beta = z,\]
		and so 
		\[a_{h + 1} - a_h = 2a_1,\]
        which implies
		\begin{equation}\label{eq:12}
			a_{h + 1} = (2h + 1) a_1.
		\end{equation}
		Therefore, it follows from \eqref{eq:11} and \eqref{eq:12} that  
		\[A = a_1 \ast \{1, 3, \ldots, 2h + 1\}.\]
		This completes the proof.
	\end{proof}

The following lemma is a particular case of Theorem $9$ in \cite{mmp2024}. But our proof is different. 
	
	\begin{lemma}\label{rss-lem18}
		Let $A = \{a_1, \ldots, a_5\}$ be a set of odd positive integers such that $a_1 < \cdots < a_5$. Then 
		\[|4_{\pm}^\wedge A| = 25,\]
		if and only if, 
		\[A = a_1 \ast \{1, 3, 5, 7, 9\}.\]
	\end{lemma}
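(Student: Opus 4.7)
The plan is to adapt Lemma \ref{rss-lem7}'s approach to the base case $h = 4$, where the available inverse result Lemma \ref{rss-lem19} yields two candidate shapes for the deleted subset rather than the single arithmetic progression provided by Lemma \ref{rss-lem6}. For the forward implication, since $4_{\pm}^\wedge(c \ast A) = c \ast 4_{\pm}^\wedge A$ is a bijection, it suffices to verify $|4_{\pm}^\wedge\{1,3,5,7,9\}| = 25$; the lower bound follows from Lemma \ref{rss-lem3} combined with Lemma \ref{rss-lem2}, and the matching upper bound is verified by tracing the explicit $25$-element set produced in those proofs and checking that every realized sum is among the listed elements.

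For the reverse implication, assume $|4_{\pm}^\wedge A| = 25$. Combining Lemmas \ref{rss-lem3} and \ref{rss-lem2} first forces $|4_{\pm}^\wedge A_5| = 15$, so Lemma \ref{rss-lem19} yields one of two structures: either (I) $a_4 = a_1 + a_2 + a_3$, or (II) $a_4 = a_2 + a_3 - a_1$. Form $C = 4_{\pm}^\wedge A_5 \cup 4^\wedge A \cup 4^\wedge(-A)$ and introduce the quantities $\alpha, \beta, x, y, z$ as in the proof of Lemma \ref{rss-lem3}. The tight equality $|4_{\pm}^\wedge A| = |C| + 2$ forces precisely one of the pairs $\{\pm\alpha\}, \{\pm\beta\}$ to supply the two missing elements. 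The generic portion of Lemma \ref{rss-lem7}'s argument---which uses only that the three smallest subsums of $A_5$ are $0, a_1, a_2$, together with $\beta < \min_+(4^\wedge A)$---shows $\beta \in C$, whence $\beta = z$ and $a_5 - a_4 = 2a_1$.

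Case (I) is then eliminated: the element $a_5 + a_1 - a_2 - a_3 \in 4_{\pm}^\wedge A$ simplifies to $4a_1$, and a direct coefficient analysis using $a_4 = a_1 + a_2 + a_3$ shows that $4a_1$ lies neither in $4_{\pm}^\wedge A_5$ (the relevant coefficient system in $\{-1,1\}^4$ has no solution), nor in $4^\wedge A \cup 4^\wedge(-A)$ (by magnitude), nor equals $\pm\alpha$ or $\pm\beta$; this produces a $26$-th element of $4_{\pm}^\wedge A$, contradicting the equality.

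The main obstacle is Case (II), where $a_4 = a_2 + a_3 - a_1$ and $a_5 = a_1 + a_2 + a_3$, and we must still show $a_2 = 3a_1$ and $a_3 = 5a_1$. The plan is to reapply the extremality machinery to the four-element subset $A \setminus \{a_3\}$: a careful accounting of how the elements of $4_{\pm}^\wedge A$ decompose according to whether $\lambda_3$ is $0$ or $\pm 1$, under the tight budget $|4_{\pm}^\wedge A| = 25$, should pin down $|4_{\pm}^\wedge(A \setminus \{a_3\})| = 15$; Lemma \ref{rss-lem19} applied to the sorted subset $\{a_1, a_2, a_4, a_5\}$, after comparing $a_5 = a_1+a_2+a_3$ to the two possible forms $b_1+b_2+b_3 = 2a_2+a_3$ and $b_2+b_3-b_1 = 2a_2+a_3-2a_1$, forces $a_2 = 3a_1$. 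A parallel argument (on $A \setminus \{a_2\}$, or a direct numerical check exploiting the just-established $a_2 = 3a_1$) then yields $a_3 = 5a_1$, completing the proof. The hardest step will be establishing the ancillary equality $|4_{\pm}^\wedge(A \setminus \{a_3\})| = 15$, which requires precise control of the overlaps between $4_{\pm}^\wedge(A \setminus \{a_3\})$ and the translates $\pm a_3 + 3_{\pm}^\wedge(A \setminus \{a_3\})$ within the fixed budget.
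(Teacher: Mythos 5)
There is a genuine gap at the pivot of your argument: the claim that ``the generic portion of Lemma \ref{rss-lem7}'s argument shows $\beta \in C$.'' The generic facts you cite (the three smallest subsums of $A_5$ are $0, a_1, a_2$, and $\beta < {\min}_{+}(4^\wedge A)$) yield only Claims 1 and 2 of Lemma \ref{rss-lem3} --- that \emph{at least one} of $\alpha, \beta$ is missing from $C$ --- and, with the tight budget, that \emph{exactly one} pair $\{\pm\alpha\}$ or $\{\pm\beta\}$ is missing. They do not decide \emph{which} one. In Lemma \ref{rss-lem7}, the proof that $\beta \in C$ (the elimination of $\alpha = y$ and $\alpha = z$ via the auxiliary elements $v$ and $w$) relies essentially on the full arithmetic-progression structure $A_{h+1} = a_1 \ast \{1, 3, \ldots, 2h-1\}$ supplied by Lemma \ref{rss-lem6}: the inequalities $x < v < y$ and $x < w < y$ use relations like $a_2 = 3a_1$ and $a_{h+1} - a_h = 2a_2 = 6a_1$ that are unavailable at $h = 4$, precisely because Lemma \ref{rss-lem19} leaves two candidate shapes for $A_5$. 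This is why the paper's proof of this lemma does not assert $\beta \in C$ up front; instead it devotes the bulk of the argument to ruling out $4_{\pm}^\wedge A = C \cup \{-\beta, \beta\}$ by tracking the forced positions of $\gamma = -a_1 + a_2 + a_4 + a_5$, $\lambda = a_1 - a_2 + a_4 + a_5$ and $\theta = a_1 + a_2 - a_3 + a_5$ among the explicitly listed $23$ elements of $C$, which drives that case all the way to $A = a_1 \ast \{1, 3, 5, 7, 11\}$ and kills it only by the computation $|4_{\pm}^\wedge \{1, 3, 5, 7, 11\}| \geq 26$. The configuration $\{1,3,5,7,11\}$ shows your shortcut cannot be repaired by soft means: it satisfies every generic constraint you invoke (including $|4_{\pm}^\wedge A_5| = 15$ with $A_5$ of shape (II)) yet has $a_5 - a_4 = 4a_1$, so any correct proof must do work specific enough to exclude it.

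Granting $\beta = z$, your Case (I) elimination via the element $a_5 + a_1 - a_2 - a_3 = 4a_1$ is a nice, genuinely different device (the paper never needs it, since its case split is by which pair is missing rather than by the shape of $A_5$), and the coefficient analysis does check out using parity of the $a_i$. But your Case (II) hinges on the ancillary equality $|4_{\pm}^\wedge(A \setminus \{a_3\})| = 15$, which you acknowledge is unproven; Lemma \ref{rss-lem2} only gives $\geq 15$, and controlling the overlap of $\pm a_3 + 3_{\pm}^\wedge(A \setminus \{a_3\})$ with $4_{\pm}^\wedge(A \setminus \{a_3\})$ inside the budget is not routine (parity does not separate them, as all elements involved are even). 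The paper avoids any second application of Lemma \ref{rss-lem19}: in the surviving case $4_{\pm}^\wedge A = C \cup \{-\alpha, \alpha\}$ it extracts $a_4 - a_3 = 2a_1$ (hence $a_2 = 3a_1$), $a_5 - a_4 = 2a_1$, and finally $a_3 = 5a_1$ directly from forced coincidences $\gamma = w$, $\lambda = y$, $\beta = z$, $\theta = -a_1 - a_2 + a_3 + a_4$ among the members of $C$. So both the elimination of the $\{\pm\beta\}$ case and the structural pin-down in Case (II) are missing from your proposal as written.
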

	
\begin{proof} If $A = a_1 \ast \{1, 3, 5, 7, 9\}$, then
       \[|4_{\pm}^\wedge A| = |4_{\pm}^\wedge \{1, 3, 5, 7, 9\}| =25.\]
       
		Now assume that $|4_{\pm}^\wedge A| = 25$. By applying Lemma \ref{rss-lem2} and Lemma \ref{rss-lem3}, we get
		\[|4_{\pm}^\wedge A| \geq |4_{\pm}^\wedge A_5| + 8 + 2 \geq 15 + 10 = 25,\]
		where $A_5 = \{a_1, a_2, a_3, a_4\}$. This implies that 
		\[|4_{\pm}^\wedge A_5| = 15,\]
		and so, it follows from Lemma \ref{rss-lem19} that
		\[\text{either}~ a_4 - a_3 = a_2 - a_1 ~\text{or}~ a_4 - a_3 = a_1 + a_2.\]
		Let 
       \begin{align*}
		 C &= 4_{\pm}^\wedge A_5 \cup 4^\wedge A \cup 4^\wedge (- A),\\
		 \alpha &= a_1 - a_2 + a_3 + a_5 = z + a_5 - a_4 - 2a_2,\\
		 \beta &= - a_1 + a_2 + a_3 + a_5 = z + a_5 - a_4 - 2a_1.	
		\end{align*}
		As shown in the proof of Lemma \ref{rss-lem3}, we have 
		\[~\text{either}~ \alpha \not\in C ~\text{or}~\beta \not\in C.\]  
		Since $|4_{\pm}^\wedge A_5| = 15$, by a similar argument as in the proof of Lemma \ref{rss-lem7}, it can be shown that exactly one of $\alpha$ and $\beta$ does not belong to $C$. Since $|C| = 23$, it follows that either
		\[4_{\pm}^\wedge A = C \cup \{- \alpha, \alpha\} ~\text{with}~ -\alpha, \alpha \not \in C\]
or
\[4_{\pm}^\wedge A = C \cup \{- \beta, \beta\}~\text{with}~ -\beta, \beta \not \in C.\].

Let $x, y, z, w$ and $\mu$ be the elements of $C$ as defined in the the proof of Lemma \ref{rss-lem3}. That is,
		\begin{align*}
			x &= a_1 - a_2 + a_3 + a_4 = z - 2a_2,\\
			y &= - a_1 + a_2 + a_3 + a_4 = z - 2a_1,\\
			z &= a_1 + a_2 + a_3 + a_4,\\
            w &= a_1 + a_2 + a_3 + a_5 = {\min}_{+} (4^\wedge A),\\
            \mu &= a_1 + a_2 + a_4 + a_5.
           \end{align*}
All $23$ elements of $C$ are listed below:
\begin{align*}
		&~-a_2 - a_3 - a_4 - a_5 < - a_1 - a_3 - a_4 - a_5 < - a_1 - a_2 - a_4  - a_5 < - a_1 - a_2 - a_3 - a_5\\
		&< - a_1 - a_2 + a_3 - a_4 < a_1 - a_2 - a_3 - a_4 < - a_1 + a_2 - a_3 - a_4 \\
        &< - a_1 - a_2 + a_3 - a_4, a_1 + a_2 - a_3 - a_4, a_1 - a_2 + a_3 - a_4, - a_1 - a_2 - a_3 + a_4,\\
        & ~~~~~~~~~~~~~~~~~~~~\ - a_1 + a_2 + a_3 - a_4, a_1 - a_2 - a_3 + a_4\\
		&< - a_1 + a_2 - a_3 + a_4 < a_1 + a_2 - a_3 + a_4, - a_1 - a_2 + a_3 + a_4 < x = a_1 - a_2 + a_3 + a_4\\
		&< y = - a_1 + a_2 + a_3 + a_4 < z = a_1 + a_2 + a_3 + a_4 < w = a_1 + a_2 + a_3 + a_5\\ 
        &< \mu = a_1 + a_2 + a_4 + a_5 < a_1 + a_3 + a_4 + a_5 < a_2 + a_3 + a_4 + a_5.
\end{align*}

Let
\[\gamma = - a_1 + a_2 + a_4 + a_5~\text{and}~\lambda = a_1 - a_2 + a_4 + a_5.\]	
		Then
		\[x, y, z, w, \alpha, \beta, \gamma, \lambda, \mu \in 4_{\pm}^\wedge A,\]
and
		\[x < y < z < w < \mu,~ y < \beta,~  x < \alpha < \beta < \gamma < \mu,~ \alpha < \lambda < \gamma, \alpha < w.\]
		Since $0 < \alpha < \beta < \gamma$, and either $4_{\pm}^\wedge A = C \cup \{- \alpha, \alpha\}$ or $4_{\pm}^\wedge A = C \cup \{- \beta, \beta\}$, it follows that $\gamma \in C$. Since $y < \gamma < \mu$ and $C$ can not have any element other than $z$ and $w$ lying between $y$ and $\mu$ it follows that either $\gamma = z$ or $\gamma = w$.

First assume that
\[4_{\pm}^\wedge A = C \cup \{- \beta, \beta\}.\] 
In this case, $\alpha \in C$. Now if $\gamma = z$, then 
		\[a_5 - a_3 = 2a_1.\]
		If $\lambda = \beta$, then 
		\[a_4 - a_3 = 2(a_2 - a_1).\]
		Since $a_4 - a_3 = a_2 - a_1$ or $a_4 - a_3 = a_2 + a_1$, it follows that
		\[\text{either}~ 2(a_2 - a_1) = a_2 - a_1 ~\text{or}~ 2(a_2 - a_1) = a_2 + a_1\]
		which implies that 
        \[\text{either}~ a_1 = a_2 ~\text{or}~ a_2 = 3a_1.\]
		Since $a_2 \neq a_1$, it follows that $a_4 - a_3 = a_1 + a_2$ and $a_2 = 3a_1$. But if $a_4 - a_3 = a_1 + a_2$, then
\[a_5 - a_3 = 2a_1 < a_1 + a_2 = a_4 - a_3,\]
and so $a_5 < a_4$, which is a contradiction. Hence 
		\[\lambda \neq \beta.\]
		Since $\lambda \in 4_{\pm}^\wedge A$, it follows that $\lambda \in C$. Since $x < y < z = \gamma$ and $x < \lambda < \gamma = z$, it follows that 
		\[\lambda = y.\]
		Since $x < \alpha < \lambda = y$, $0 < \alpha < \beta$, it follows that
		\[\alpha \not\in C \cup \{- \beta, \beta\} = 4_{\pm}^\wedge A\]
		which is again a contradiction. Thus $\gamma \neq z$. Hence
		\[\gamma = w\]
		which implies
		\[a_4 - a_3 = 2a_1.\]
		Since $a_4 - a_3 = a_2 - a_1$ or $a_4 - a_3 = a_2 + a_1$, it follows that
		\[\text{either}~ 2a_1 = a_2 - a_1 ~\text{or}~ 2a_1 = a_2 + a_1\]
which implies
\[\text{either}~ a_2 = 3a_1 ~\text{or}~ a_1 = a_2.\]
Since $a_2 \neq a_1$, it follows that $a_4 - a_3 = a_2 - a_1$ and $a_2 = 3a_1$.
	    If $\lambda = \beta$, then 
	    \[a_4 - a_3 = 2(a_2 - a_1)\]
	   which implies
	    \[2(a_2 - a_1) = a_2 - a_1,\]
and so $a_1 = a_2$, which is a contradiction. Hence
	    \[\lambda \neq \beta.\]
	    Since $\lambda \in C \cup \{- \beta, \beta\}$, it follows that $\lambda \in C$.
		Since $x < \alpha < \lambda < \gamma = w$, $x < y < z < w = \gamma$ and $\alpha, \lambda \in C$, it follows that
		\[\alpha = y ~\text{and}~ \lambda = z\]
		which implies that $a_5 - a_4 = 2(a_2 - a_1) = 4a_1$ and $a_5 - a_3 = 2a_2 = 6a_1$. Let 
		\[\theta = a_1 + a_2 - a_3 + a_5 \in 4_{\pm}^{\wedge}A.\]
		Then it is easy to verify that
		\[a_1 + a_2 - a_3 + a_4 < \theta < \alpha = y < \beta\]
		and since $\theta \neq \beta$, it follows that $\theta \in C$. Since $$- a_1 + a_2 - a_3 + a_4 < a_1 + a_2 - a_3 + a_4 < \theta < \alpha = y,$$ and $$- a_1 + a_2 - a_3 + a_4 < a_1 + a_2 - a_3 + a_4, - a_1 - a_2 + a_3 + a_4 < x < y = \alpha,$$ it follows that
		\[\text{either}~\theta = x ~\text{or}~ \theta = - a_1 - a_2 + a_3 + a_4.\]
		If $\theta = - a_1 - a_2 + a_3 + a_4$, then $a_3 = 6a_1$ which is a contradiction because $a_3$ is an odd integer. Hence
		\[\theta = x,\]
		and so
		\[a_3 = 5a_1.\]
		Thus we have
		\[a_2 = 3a_1, a_3 = 5a_1, a_4 = a_3 + 2a_1 = 7a_1, a_5 = a_4 + 4a_1 = 11a_1.\]
		Therefore,
		\[A = a_1 \ast \{1, 3, 5, 7, 11\},\]
and so
		\[|4_{\pm}^\wedge A| = |4_{\pm}^\wedge \{1, 3, 5, 7, 11\}| \geq 26,\]
		which is a contradiction. Hence $\gamma \neq w$, and thus
		\[\gamma \notin \{z, w\}.\]
		which is again a contradiction. Therefore, $4_{\pm}^\wedge A = C \cup \{- \beta, \beta\}$ is not possible. 

Now assume that
		\[4_{\pm}^\wedge A = C \cup \{- \alpha, \alpha\}.\]
		Since $0 < \alpha < \beta$, $\beta \in 4_{\pm}^\wedge A = C \cup \{- \alpha, \alpha\}$ we have
		\[\beta \in C.\]
		If $\lambda = \beta$, then 
		\[a_4 - a_3 = 2(a_2 - a_1).\]
		If $\delta_1 = a_4 - a_3 = a_2 - a_1$ and $a_4 - a_3 = 2(a_2 - a_1)$, it follows that
		\[2(a_2 - a_1) = a_2 - a_1\]
		which implies that $a_1 = a_2$, a contradiction. Hence
		\[\delta_1 = a_4 - a_3 = a_1 + a_2,\]
		and so
		\[a_2 = 3a_1.\]
		If $\gamma = z$, then
		\[a_5 - a_3 = 2a_1.\]
		Since $\delta_1 = a_4 - a_3 = a_1 + a_2$ and $a_5 - a_3 = 2a_1$, it follows that
		\[a_4 - a_3 = a_1 + a_2 > 2a_1 = a_5 - a_3\]
		which implies that $a_4 > a_5$, a contradiction. Hence
		\[\gamma = w,\]
		and so
		\[a_4 - a_3 = 2a_1.\]
		Since $a_4 - a_3 = a_1 + a_2$ and $a_4 - a_3 = 2a_1$, it follows that
		\[a_1 = a_2,\]
		which is contradiction. Hence
		\[\lambda \neq \beta,\]
		which implies that
		\[\lambda \in C.\]
		If $\gamma = z$, then 
		\[a_5 - a_3 = 2a_1.\]
		Since $x < y < z = \gamma$,~ $x < \lambda,~\beta < \gamma = z$,~ $\alpha < \lambda$,~ $\alpha < \beta$ and $\lambda \neq \beta$, it follows that
		\[\text{either}~\beta \notin C \cup \{-\alpha, \alpha\} ~\text{or}~ \lambda \notin C \cup \{-\alpha, \alpha\},\]
		which is a contradiction. Hence $\gamma \neq z$. Since $\gamma \neq z$, it follows that
		\[\gamma = w\]
		and so
		\begin{equation}\label{eq:19}
			a_4 - a_3 = 2a_1.
		\end{equation}
		If $a_4 - a_3 = a_1 + a_2$ and $a_4 - a_3 = 2a_1$, then
		\[a_1 = a_2\]
		which is contradiction. Hence 
		\[a_4 - a_3 = a_2 - a_1\]
		which implies that
		\begin{equation}\label{eq:20}
			a_2 = 3a_1.
		\end{equation}
This relation also implies that $\lambda < \beta$. Since $x < y < z < w = \gamma$, $x < \lambda < \beta < \gamma = w$, and $\lambda, \beta \in C$, it follows that
		\[\lambda = y~\text{and}~ \beta = z.\]
Both of this equalities implies thta
		\begin{equation}\label{eq:21}
			a_5 - a_4 = 2a_1.
		\end{equation}
        Let 
		\[\theta = a_1 + a_2 - a_3 + a_5.\]
		Then it is easy to verify that
		\[a_1 + a_2 - a_3 + a_4 < \theta < \lambda = y\]
		and so 
		\[\theta \in C.\]
		Since 
\[- a_1 + a_2 - a_3 + a_4 < a_1 + a_2 - a_3 + a_4 < \theta < \lambda = y,\]
and
\[- a_1 + a_2 - a_3 + a_4 < a_1 + a_2 - a_3 + a_4, - a_1 - a_2 + a_3 + a_4 < x < y = \lambda,\]
it follows that either
		\[\theta = x ~\text{or}~ \theta = - a_1 - a_2 + a_3 + a_4.\]
		If $\theta = x$, then $a_3 = 4a_1$, which is a contradiction because $a_3$ is an odd integer. Hence $\theta = - a_1 - a_2 + a_3 + a_4$, which implies
		\begin{equation}\label{eq:22}
			a_3 = 5a_1.
		\end{equation} 
		Hence it follows from \eqref{eq:19}, \eqref{eq:20}, \eqref{eq:21} and \eqref{eq:22} that
		
		\[a_2 = 3a_1, a_3 = 5a_1, a_4 = a_3 + 2a_1 = 7a_1, a_5 = a_4 + 2a_1 = 9a_1.\]
		Therefore,
		\[A = a_1 \ast \{1, 3, 5, 7, 9\}.\]
		This completes the proof.
	\end{proof}

The following lemma is the part of the result contained in \cite{mmp2024} which will be useful to prove Theorem \ref{rssp-thm-2}.

\begin{lemma}[{\cite[Theorem $5$]{mmp2024}}]\label{rssp-inv-lem}
Let $h$ and $k$ be positive integers such that $4 \leq h \leq k - 1$. Let $A = \{a_1, a_2,  \ldots, a_k\}$ be a set of positive integers with $a_1 < a_2 < \cdots < a_k$ such that
\[|h_{\pm}^\wedge A| = 2hk - h^2 + 1.\]
Let the set $B = \{a_1, a_2,  \ldots, a_{h + 1}\} \subseteq A$ be in arithmetic progression. Then
\[A = a_1 \ast \{1, 3, \ldots, 2k - 1\}.\]
\end{lemma}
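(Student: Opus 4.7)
The plan is to split the proof into three steps: first to reduce to an extremal statement about $B$, then to determine $B$ explicitly, and finally to extend the arithmetic structure to the rest of $A$. Combining the hypothesis $|h_{\pm}^\wedge A| = 2hk - h^2 + 1$ with Lemma \ref{rssp-basic-lem1} (applied with $t = 0$) and the lower bound in Theorem \ref{rssp-thm-3} immediately yields $|h_{\pm}^\wedge B| = h^2 + 2h + 1$. Write $B = \{a_1 + id : 0 \leq i \leq h\}$ for some positive integer $d$; the task reduces to showing $d = 2a_1$ and then propagating this structure forward.

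To show $d = 2a_1$, I would analyze $h_{\pm}^\wedge B$ through its natural parameterization: each element equals $sa_1 + td$, where $s = \sum \epsilon_i$ and $t = \sum i\epsilon_i$ range over pairs achievable by valid sign patterns. The count of achievable $(s,t)$ pairs substantially exceeds $h^2 + 2h + 1$, so equality of sumset sizes requires many collisions $sa_1 + td = s'a_1 + t'd$; these are governed by the kernel $\mathbb{Z}\,(d/g,\,-a_1/g)$, where $g = \gcd(a_1, d)$. A direct computation near the maximum $M = ha_1 + \binom{h+1}{2}d$ shows that $M - 2d$ (achieved by $\epsilon_2 = 0$, $\epsilon_i = +1$ for $i \neq 2$) and $M - 2a_1 - d$ (achieved by $\epsilon_1 = 0$, $\epsilon_0 = -1$, $\epsilon_i = +1$ for $i \geq 2$) are both in $h_{\pm}^\wedge B$, and they coincide if and only if $d = 2a_1$. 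Iterating this type of comparison at other positions in the sumset shows that $d \neq 2a_1$ produces strictly more than $h^2 + 2h + 1$ distinct elements, contradicting the equality. The case where $a_1$ is odd and $d$ is even (so $B$ consists of odd integers) is immediate from Lemma \ref{rss-lem7} for $h \geq 5$ or Lemma \ref{rss-lem18} for $h = 4$; the case where $a_1, d$ share a common factor reduces by scaling; the mixed-parity cases $(a_1, d)$ both odd or $a_1$ even, $d$ odd require the direct enumeration above. In every case, $d = 2a_1$ and $B = a_1 \ast \{1, 3, \ldots, 2h+1\}$.

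Extension to the remaining elements $a_{h+2}, \ldots, a_k$ proceeds by induction on $j$. Assume $a_i = (2i-1)a_1$ for $i < j$, and consider $A^{(j)} = \{a_1, \ldots, a_j\}$. The construction in the proof of Lemma \ref{rssp-basic-lem1} exhibits at least $2h$ new elements in $h_{\pm}^\wedge A^{(j)}$ relative to $h_{\pm}^\wedge A^{(j-1)}$; extremality of $|h_{\pm}^\wedge A|$ forces this contribution to be exactly $2h$. A case analysis, matching these $2h$ new elements against the already-known arithmetic structure of $A^{(j-1)}$, pins down $a_j = (2j - 1)a_1$ and extends the progression. The principal obstacle is the enumeration in Step 2: for every mixed-parity AP with $d \neq 2a_1$, proving that the number of distinct elements in $h_{\pm}^\wedge B$ strictly exceeds $h^2 + 2h + 1$ requires careful tracking of collisions of the map $(s,t) \mapsto sa_1 + td$. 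The inductive bookkeeping in Step 3 is more routine but still needs care to guarantee that $a_j$ is uniquely determined at each stage.
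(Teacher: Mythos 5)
You should first note that the paper contains no internal proof of this statement at all: it is imported verbatim as \cite[Theorem 5]{mmp2024}, so your proposal is in effect an attempt to reprove an external theorem, and as written it does not succeed. Your Step 1 is correct and matches how the paper extracts $|h_{\pm}^\wedge B| = h^2 + 2h + 1$ inside the proof of Theorem \ref{rssp-thm-2}. Step 2, however, is both incomplete and an unnecessary detour. The single coincidence $M - 2d = M - 2a_1 - d$ shows nothing by itself; to conclude $|h_{\pm}^\wedge B| > h^2 + 2h + 1$ whenever $d \neq 2a_1$ you would need a complete catalog of $h^2+2h+2$ distinct elements for every mixed-parity AP, and ``iterating this type of comparison'' is a restatement of that task, not an argument. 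Moreover the paper already supplies exactly this: the case analysis in the proof of Theorem \ref{rssp-thm-3} (via Lemmas \ref{rss-lem1}, \ref{rss-lem4}, \ref{rss-lem5}) shows that any set of positive integers which is not, after dividing out the gcd, entirely odd satisfies $|h_{\pm}^\wedge B| \geq h^2 + 2h + 2$, and Lemmas \ref{rss-lem7} and \ref{rss-lem18} then force $B = a_1 \ast \{1, 3, \ldots, 2h+1\}$ --- none of which even uses the hypothesis that $B$ is an AP. Your proposed $(s,t)$-collision enumeration replaces available lemmas with unexecuted work.

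The genuine gap is Step 3, which is the entire content of the lemma being proved. Writing ``a case analysis, matching these $2h$ new elements against the already-known arithmetic structure of $A^{(j-1)}$, pins down $a_j = (2j-1)a_1$'' asserts precisely what must be demonstrated. Concretely, you must show that if $a_j \neq (2j-1)a_1$ --- whether $a_j$ has the wrong parity relative to $a_1$, is a multiple of $a_1$ other than $(2j-1)a_1$, or is not a multiple of $a_1$ at all --- then adjoining $a_j$ contributes at least $2h+1$ new elements to $h_{\pm}^\wedge A^{(j)}$; each case requires constructing explicit new sums and separating them from $h_{\pm}^\wedge A^{(j-1)}$, which is the delicate bookkeeping that occupies the proof in \cite{mmp2024}. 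Compounding this, the construction you invoke ``in the proof of Lemma \ref{rssp-basic-lem1}'' is also not available in this paper, since that lemma is likewise quoted from \cite{mmp2024} without proof. So your induction rests on two external black boxes plus an unproven uniqueness claim; the global accounting (each of the $k - h - 1$ extra elements must contribute exactly $2h$ new sums under extremality) is sound, but until the implication ``exactly $2h$ new elements at stage $j$ forces $a_j = (2j-1)a_1$'' is actually established, what you have is a plausible outline rather than a proof.
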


\section{Proof of Theorem \ref{rssp-thm-1} and Theorem \ref{rssp-thm-2}}\label{section-rrsp-thm-proof}

\begin{proof}[Proof of Theorem \ref{rssp-thm-3}]
Let $A = \{a_1, \ldots, a_{h + 1}\}$, where $a_1 < \cdots < a_{h+1}$. Since 
\[h_{\pm}^\wedge (d \ast A)| = |d \ast h_{\pm}^\wedge A| = |h_{\pm}^\wedge A|\]
for all positive integers $d$, we may assume that 
\[\gcd(a_1, \ldots, a_{h+1}) = 1.\]
The theorem holds for $h = 3$ as proved in \cite{bhanja-kom-pandey2021}. Therefore, we assume that $h \geq 4$. 

\noindent {\textbf{Case 1}} ($a_2 \equiv a_1 \pmod 2$ and $a_r \not \equiv a_1 \pmod 2$ for  some $r \in [3, h + 1]$). In this case, it follows from Lemma \ref{rss-lem1} that
	\[|h_{\pm}^\wedge A| \geq h^2 + 3h + 2 > h^2 + 2h + 1.\]

\noindent {\textbf{Case 2}} ($a_1 \equiv a_2 \equiv \cdots \equiv a_{h + 1} \pmod 2$). In this case, if $a_1 \equiv 0 \pmod 2$, then 
\[a_1 \equiv a_2 \equiv \cdots \equiv a_{h + 1} \equiv 0 \pmod 2.\]
This implies that $\gcd(a_1, \ldots, a_{h+1}) \geq 2$, which is a contradiction. Hence $a_1 \not \equiv 0 \pmod 2$. Therefore, Lemma \ref{rss-lem3} implies that 
\[|h_{\pm}^\wedge A| \geq h^2 + 2h + 1.\]
	
\noindent {\textbf{Case 3}} ($a_2 \not \equiv a_1 \pmod 2$). In this case, if $a_3 \not \equiv a_1 \pmod 2$, then Lemma \ref{rss-lem4} implies that 
	\begin{equation*}
		|h_{\pm}^\wedge A| \geq 
		\begin{cases}
			h^2 + 3h, &\mbox{if }  a_3 = 2a_1 + a_2;\\
			h^2 + 4h - 1, &\mbox{if } a_3 \neq 2a_1 + a_2.   
		\end{cases}
	\end{equation*}
	Hence
	\[|h_{\pm}^\wedge A| \geq h^2 + 2h + 2 > h^2 + 2h + 1.\]	
If $a_3 \equiv a_1 \pmod 2$, then Lemma \ref{rss-lem5} implies that 
	\begin{equation*}
		|h_{\pm}^\wedge A| \geq 
		\begin{cases}
			h^2 + 2h + 2, &\mbox{if } A_2 ~\text{is not in an A.P. and}~ h \geq 4;\\
			\frac{1}{2} h(3h - 1) + 4, &\mbox{if } h \geq 4, A_2 ~\text{is an A.P. and}~ a_2 \not \equiv 0 \pmod 2;\\
			26, &\mbox{if } A_2 ~\text{is an A.P. and}~ a_2 \equiv 0 \pmod 2, h = 4;\\
			2h(h - 1), &\mbox{if } A_2 ~\text{is an A.P. and}~ a_2 \equiv 0 \pmod 2, h \geq 5.  
		\end{cases}
	\end{equation*}
	Hence
	\[|h_{\pm}^\wedge A| \geq h^2 + 2h + 2 > h^2 + 2h + 1.\]	
Thus, in all cases, we have
\[|h_{\pm}^\wedge A| \geq h^2 + 2h + 1.\]
To show that the lower bound in $\eqref{rssp-thm-3eq1}$ is best possible, consider the set $A = \{1, 3, \ldots, 2h-1\}$. Then 
		\[h_{\pm}^\wedge A \subseteq [-h^2 - 2h, h^2 + 2h].\]
If $h$ is even, then $h_{\pm}^\wedge A$ contains only even integers in the above interval. Hence
		\[|h_{\pm}^\wedge A| \leq  2h^2 + 4h + 1 - (h^2 + 2h) = h^2 + 2h + 1.\]
But 
		\[|h_{\pm}^\wedge A| \geq  h^2 + 2h + 1,\]
		and so
		\[|h_{\pm}^\wedge A| =  h^2 + 2h + 1.\]
		Similarly, if $h$ is odd, then
		\[|h_{\pm}^\wedge A| =  h^2 + 2h + 1.\]
		This completes the proof.
\end{proof}

\begin{proof}[Proof of Theorem \ref{rssp-thm-1}]
The theorem easily follows from Lemma \ref{rssp-basic-lem1} and Theorem \ref{rssp-thm-3}.
To show that the lower bound in $\eqref{rssp-thm-1eq1}$ is best possible, let 
		\[A = \{1, 3, \ldots, 2k - 1\}.\]
		Then 
		\[\min (h_{\pm}^\wedge A) = - (2k - 2h + 1) - (2k - 2h + 3) - \cdots - (2k - 1) = - 2hk + h^2,\]
		and
		\[\max (h_{\pm}^\wedge A) = (2k - 2h + 1) + (2k - 2h + 3) + \cdots + (2k - 1) = 2hk - h^2.\]
		Hence
		\[h_{\pm}^\wedge A \subseteq [- 2hk + h^2, 2hk - h^2].\]
		It is easy to verify that
		\[h \equiv 2hk - h^2 \equiv x \pmod 2\]
		for each $x \in h_{\pm}^\wedge A$. 
		
		Now if $h$ is an even integer and $A$ contains only odd integers, then $h_{\pm}^\wedge A$ does not contains any odd integers, hence
		\[|h_{\pm}^\wedge A| \leq  2hk - h^2 + 1,\]
		but 
		\[|h_{\pm}^\wedge A| \geq  2hk - h^2 + 1,\]
		and so
		\[|h_{\pm}^\wedge A| =  2hk - h^2 + 1.\]
		Similarly, if $h$ is odd integer, then
		\[|h_{\pm}^\wedge A| =  2hk - h^2 + 1.\]
		This completes the proof.
\end{proof}
	
\begin{proof}[Proof of Theorem \ref{rssp-thm-2}]
The theorem holds for $h = 3$ as proved in \cite{bhanja-kom-pandey2021}. Therefore, we assume that $h \geq 4$. 
Let $A = \{a_1, \ldots, a_k\}$, where $a_1 < \cdots < a_k$. Let $B = \{a_1, \ldots, a_{h + 1}\} \subseteq A$, and let $A' = A \setminus \{a_1\}$. Then
\[(-h^{\wedge} A') \cup h_{\pm}^{\wedge} B \cup h^{\wedge} A' \subseteq  h_{\pm}^{\wedge} B.\]
Since
\[(-h^{\wedge} A') \cap h_{\pm}^{\wedge} B = \{-a_2 - \cdots - a_{h + 1}\},\]
and
\[h_{\pm}^{\wedge} B \cap h^{\wedge} A' = \{a_2 + \cdots + a_{h + 1}\},\]
it follows that
\[|h_{\pm}^{\wedge} A| \geq |h_{\pm}^{\wedge} B| + 2 |h^{\wedge} A'| - 2.\]
Therefore, applying Theorem \ref{restricted-hfold-direct-thm} and Theorem \ref{rssp-thm-3}, we get
\begin{align*}
  2hk - h^2 + 1 = |h_{\pm}^{\wedge} A| & \geq |h_{\pm}^{\wedge} B| + 2 |h^{\wedge} A'| - 2\\
   & \geq |h_{\pm}^{\wedge} B| + 2(h(k-1) - h^2 + 1) - 2 \\
   & \geq (h^2 + 2h + 1) + 2(h(k-1) - h^2 + 1) - 2\\
   & = 2hk -h^2 + 1.
\end{align*}
The above inequalities imply that
\[|h_{\pm}^{\wedge} B| = h^2 + 2h + 1.\]
Now let $d = \gcd (a_1, \ldots, a_{h + 1})$, and let $a_i' = a_i/d$ for $i \in [1, h + 1]$. Then $\gcd (a_1', \ldots, a_{h + 1}') = 1$. Let $B' = \{a_1', \ldots, a_{h + 1}'\}$. Then $B = d \ast B'$, and so
\[|h_{\pm}^{\wedge} B'| = |h_{\pm}^{\wedge} B| = h^2 + 2h + 1.\]
It follows from the proof of Theorem \ref{rssp-thm-3} that the above inequality holds if and only if all the elements of $B'$ are odd positive integers. Therefore, Lemma \ref{rss-lem7} and Lemma \ref{rss-lem18} imply that
\[B' = a_1' \ast \{1, 3, \ldots, 2h + 1\},\]
and so
\[B = d \ast B' = da_1' \ast \{1, 3, \ldots, 2h + 1\} = a_1 \ast \{1, 3, \ldots, 2h + 1\}.\]
Now Lemma \ref{rssp-inv-lem} implies that
\[A = a_1 \ast \{1, 3, \ldots, 2k - 1\}.\]
This completes the proof.
\end{proof}	
	
	

\end{document}